\documentclass{amsart}
\usepackage[margin=1.2in]{geometry}
%\title{Systems of vectors generated by the iterations of operators}
%\title{Dynamic of a Normal Operator in Hilbert Space}
%\title{The Dynamics of a Normal Operator}
%\title {Dynamics of Normal Operators} }
\title{Iterative actions of normal operators}

\usepackage{hyperref}
\author[A. Aldroubi, C. Cabrelli, A.F.  \c{C}akmak, U. Molter, A. Petrosyan]{A. Aldroubi, C. Cabrelli,  A. F.  \c{C}akmak, U. Molter, A. Petrosyan}

\date{\vspace{-5ex}}
\usepackage{amsthm}
\usepackage{ dsfont }
\usepackage{amssymb}
\usepackage{enumerate}
\usepackage{graphicx}
\usepackage{float}
\usepackage{bbm}
\usepackage{amsmath}
\usepackage{comment}
\usepackage{hyperref}
\usepackage{listings}
\usepackage{color}
\usepackage[dvipsnames]{xcolor}

\hypersetup{
    colorlinks,
    citecolor=blue,
    filecolor=blue,
    linkcolor=blue,
    urlcolor=blue
}

\newtheorem{theorem}{Theorem}[section]
\newtheorem{proposition}[theorem]{Proposition}
\newtheorem{lemma}[theorem]{Lemma}

\newtheorem{corollary}[theorem]{Corollary}
\newtheorem{definition}[theorem]{Definition}
\newtheorem{remark}[theorem]{Remark}

\newtheorem*{prop}{Proposition}
\newtheorem{example}{Example}

% % % % % % % % % % %our macros % % % % % % % % % % % % % % %

\newcommand{\HH}{\mathcal{H}}

\newcommand{\cA}{\mathcal{A}}

\newcommand{\R}{\mathbb{R}}
\newcommand{\Z}{\mathbb{Z}}
\newcommand{\N}{\mathbb{N}}
\newcommand{\CC}{\mathbb{C}}

\providecommand{\norm}[1]{\lVert#1\rVert}
\newcommand{\G}{\mathcal{G}}
\newcommand {\D} {\mathbb D}

%%%%%%%%%%%%%%%%%
\newcommand{\wi}{\widetilde}
\newcommand{\Om}{\Omega}
%%%%%%%%%%%%%%%%%
\newcommand{\supp}{{\rm supp\,}}
\newcommand{\rank}{{\rm rank\,}}

\newcommand{\spn}{{\rm span\,}}

\newcommand {\la} {\langle}
\newcommand {\ra} {\rangle}
% % % % % % % % % % % % % % % % % % % % % % % % % % %

\begin{document}
	\date{}
\address{\textrm{(Akram Aldroubi)}
Department of Mathematics,
Vanderbilt University,
Nashville, Tennessee 37240-0001 USA}
\email{aldroubi@math.vanderbilt.edu}

\address{\textrm{(Carlos Cabrelli)}
Departamento de Matem\'atica,
Facultad de Ciencias Exac\-tas y Naturales,
Universidad de Buenos Aires, Ciudad Universitaria, Pabell\'on I,
1428 Buenos Aires, Argentina and
IMAS, UBA-CONICET, Consejo Nacional de Investigaciones
Cient\'ificas y T\'ecnicas, Argentina}
\email{cabrelli@dm.uba.ar}

\address{\textrm{(Ahmet Faruk   \c{C}akmak)}
Department of Mathematical Engineering, Yildiz Technical Univ., Davutpa\c{s}a Campus, Esenler, \.{I}stanbul, 80750, Turkey, Department of Mathematics,
Vanderbilt University,
Nashville, Tennessee 37240-0001 USA}
\email{acakmak@yildiz.edu.tr}

\address{\textrm{(Ursula Molter)}
Departamento de Matem\'atica,
Facultad de Ciencias Exac\-tas y Naturales,
Universidad de Buenos Aires, Ciudad Universitaria, Pabell\'on I,
1428 Buenos Aires, Argentina and
IMAS, UBA-CONICET, Consejo Nacional de Investigaciones
Cient\'ificas y T\'ecnicas, Argentina}
\email{umolter@dm.uba.ar}

\address{\textrm{(Armenak Petrosyan)}
Department of Mathematics,
Vanderbilt University,
Nashville, Tennessee 37240-0001 USA}
\email{armenak.petrosyan@vanderbilt.edu}

\thanks{The research of
A.~Aldroubi and A.~Petrosyan is supported in part by NSF Grant DMS- 1322099.
C.~Cabrelli and U.~Molter are partially supported by
Grants  PICT 2014-1480 (ANPCyT), CONICET PIP 11220110101018,
UBACyT 20020130100403BA and UBACyT 20020130100422BA. A.F.  \c{C}akmak is supported by the Scientific and Technological Research Council of Turkey, TUBITAK 2014 - 2219/I
}

\keywords{Sampling Theory, Frames, Sub-Sampling,
Reconstruction, M\"untz-Sz\'asz Theorem, Feichtinger conjecture}
\subjclass [2010] {46N99, 42C15,  94O20}

\begin{abstract}
Let $A$ be a normal operator in a Hilbert space $\HH$, and let $\G \subset \HH$ be a  countable  set of vectors. We investigate the relations  between $A$, $\G$  and $L$ that make the system of iterations $\{A^ng: g\in \G,\;0\leq n< L(g)\}$ complete, Bessel, a basis, or a frame for $\HH$. The problem is motivated by the dynamical sampling problem and is connected to several topics in functional analysis, including,  frame theory and spectral theory.  It  also has relations to topics in applied harmonic analysis including, wavelet theory and time-frequency analysis.
\end{abstract}
\maketitle

\section{Introduction}

Let $\HH$ be an infinite dimensional separable complex Hilbert space, $A\in B(\HH)$ a bounded normal operator and $\G$ a  countable (finite or countably infinite) collection of vectors in $\HH$. Let $L$ be a function $L:\G \rightarrow \N^*$, where $\N^* = \{1,2, \dots \}\cup \{+\infty\}$. We are interested in  the structure of the set of iterations of the operator $A$ when acting on the vectors in $\G$ and are limited by the function $L$. More precisely, we are interested in the following two questions:
\begin {enumerate}
\item [(I)]Under what conditions on $A,\, \G$ and $L$ is the iterated system of vectors
\begin{equation*}
\label {ITSYS}
\{A^ng: g\in \G,\;0\leq n< L(g)\}
\end {equation*}
 complete, Bessel, a basis, or a frame for $\HH$? 

\item [(II)]If $\{A^ng: g\in \G,\;0\leq n< L(g)\}$ is complete, Bessel, a basis, or a frame for $\HH$ for some system of vectors $\G$ and a function $L:\G \rightarrow \N^*$, what can be deduced about the operator $A$?
\end {enumerate}

We study these and other related questions and we give  answers in many important and general cases. In particular, we show that there is a direct relation  between the spectral properties of a normal operator and the properties of the systems of vectors generated by its iterative actions on a set of vectors. We are hoping that the questions above and the approach we use can be interesting for  research in both,  frame theory and operator theory.  

For the particular case when $L(g)=\infty$ for every $g \in \G$, we show that, if the system of iterations $\{A^ng: g\in \G,\;n \ge 0\}$ is complete and Bessel, then the spectral radius of $A$ must be less than or equal to $1$. Since $A$ is normal, $A$ must be a contraction in this case, i.e., $\|A\|\le1$. Moreover,  it's unitary part must be absolutely continuous (with respect to the Lebesgue measure on the circle). The converse of this is  also true:  for every normal contraction with absolutely continuous unitary part, there exists a set $\G$ such that  $\{A^ng: g\in \G,\;n \ge 0\}$ is a complete Bessel system.	If  $\{A^ng: g\in \G,\;n \ge 0\}$ is a frame then its unitary part must be 0 and the converse  is also true:  for every normal contraction with no unitary part there exists a set $\G$ such that  $\{A^ng: g\in \G,\;n \ge 0\}$ is a (Parseval) frame.

The questions above, in their formulation have similarities with  problems involving cyclical vectors in operator theory, and our analysis  relies on the spectral theorem for normal operators with multiplicity \cite {conway}. There have been some attempts to generalize multiplicity  theory to non-normal operators \cite{Nik89}. Although it cannot be generalized entirely,  some aspects of it have been extended to general operators. In finite dimensions, the spectral theorem for normal operators,   represents the underlying space as a sum of invariant subspaces. For general operators the decomposition into invariant subspaces leads to Jordan's theorem. In the infinite dimensional case,  the extension leads to a decomposition into invariant subspaces, and one of the goals is to give conditions under which these subspaces $\{S_n\}$ form  Riesz bases or equivalently unconditional bases, see \cite{Nik89, Tre97} and the references therein (this notion of Riesz basis is related but different from the one we use in this work  as defined by \eqref {RB}).  The multiplicity of a spectral value for a normal operators has also been extended. For general operators,  a global multiplicity (called multicyclicity) is  particularly useful in the context of control theory: using multicyclicity theory  for a completely non-unitary contraction $A$, a formula for $\min |\G|$ such that $\{A^ng: g\in \G,\;n \ge 0\}$ is complete in $\HH$ was obtained ( see  \cite{NV83, Nik86} and the references therein). For a normal operator $A$, this number can be deduced  from Theorem \ref {mainthm}.

  Our main goal in this paper is to find frames or other types of systems through the iterative action of a normal operator,  and we use the full power of the spectral theorem with multiplicity.
	 
For us, the motivation to study the iterative actions of normal operators  comes from sampling theory and related topics \cite { ABK08, CMO12, D09, LS15, NS10, BHP15, BHP15ii, GOR15, Sun10, Sun15, DJ15, DJ15II}. Specifically, the motivation derives from the so called  {\em dynamical sampling problem} \cite {ACMT14, ADK15,D14,GRUV15, HRLV10}: Let the initial state of a system be given by a vector $f$ in a Hilbert space $\HH$ and assume that the initial state is evolving under the action of a bounded operator $A\in \mathcal B (\HH)$ to the states 
$$f_n=Af_{n-1},\;f_0=f.$$
Given a finite or countably infinite set of vectors $\G$,  the problem is then to find conditions on $A$, $\G$ and $L:\G \rightarrow \N^*$,   that allow the recovery  of the function $f\in \HH$ from the set of samples
\begin{equation}\label{samples}
\{\la A^nf,g \ra: g\in \G,\; 0\leq n<L(g)  \}.
\end{equation}
 Let $X$ be the set $X=\{(g,k): g \in \G, \; 0\le k <L(g)\}$. Under  appropriate conditions on $A, L$ and $\G$, the sequence $\{\la A^nf,g \ra :(g,n) \in X\}$ belongs to $\ell^2(X)$. It is fundamental in applications that the reconstruction operator  $R: \ell^2(X) \to \HH$ given by $R(\la A^nf,g\ra )=f$ for all $f \in \HH$, exists and is well defined. Moroever, it is very important that it is bounded.  This is because very often, the samples $\{\la A^nf,g \ra :(g,n) \in X\}$ are corrupted by ``noise''  $\{\eta_{g,n} :(g,n) \in X\}$, and we require the reconstruction $\tilde f= R(\la A^nf,g\ra )+R (\eta_{g,n})$ to  be close to $f$ when the noise is small. When $R$ exists and is bounded, it is said that $f$ can be reconstructed in a stable way. If this property holds it is also said that the reconstruction is continuously dependent on the data.

It is not difficult to show that the problem above is related to the problem of finding  whether the set of vectors $\{(A^*)^n g\}_{ g\in \G,\;0\leq n< L(g)}$ (where $A^*$ denote the adjoint of $A$) is complete or a frame (see definition in Section \ref{sec2} below) for $\HH$ as described in the following proposition. 

\begin{prop}[\cite{ACMT14}]
For any vector  $f\in \HH$, $f$ can be uniquely recovered from the samples (\ref{samples}) if and only if the system of vectors $$\{(A^*)^n g\}_{ g\in \G,\;0\leq n< L(g)}$$
is complete in $\HH$. 

Moreover, $f$ can be uniquely recovered in a stable way from the samples (\ref{samples}) if and only if the system of vectors $$\{(A^*)^n g\}_{ g \in \G,\;0\leq n< L(g)}$$ is a frame for $\HH$.
\end{prop}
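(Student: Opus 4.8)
The plan is to reduce everything to a single adjoint identity and then read off the two equivalences from it. The starting observation is that for every $f\in\HH$, every $g\in\G$, and every $n\geq 0$,
\begin{equation*}
\la A^nf,g\ra=\la f,(A^n)^*g\ra=\la f,(A^*)^ng\ra,
\end{equation*}
so the sample data in \eqref{samples} is nothing but the collection of inner products of $f$ against the iterated system $\{(A^*)^ng\}_{g\in\G,\,0\le n<L(g)}$. In other words, the sampling map coincides with the analysis operator $C\colon\HH\to\ell^2(X)$ associated to that system, namely $Cf=\{\la f,(A^*)^ng\ra\}_{(g,n)\in X}$. Once this is established, both claims become statements about $C$.

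For the uniqueness part, I would argue that $f$ is determined by its samples precisely when $C$ is injective. If $f_1,f_2$ produce the same samples, then $f_1-f_2$ is orthogonal to $(A^*)^ng$ for all admissible $(g,n)$, hence lies in the orthogonal complement of $\spn\{(A^*)^ng\}$; conversely any such orthogonal vector yields an ambiguity. Thus injectivity of the sampling map is equivalent to $\big(\overline{\spn}\{(A^*)^ng\}\big)^\perp=\{0\}$, which is exactly the statement that $\{(A^*)^ng\}_{g\in\G,\,0\le n<L(g)}$ is complete in $\HH$. This direction is essentially formal.

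For the stable-recovery part, the content is to identify ``bounded reconstruction operator $R$ with $RC=I$'' with ``$\{(A^*)^ng\}$ is a frame.'' In the forward direction, the frame inequalities force $C$ to be bounded (upper bound) and bounded below (lower bound), so $C$ has closed range and a bounded inverse on that range; taking $R$ to be the pseudoinverse $C^\dagger=(C^*C)^{-1}C^*$ gives a bounded reconstruction defined on all of $\ell^2(X)$. Conversely, if a bounded $R\colon\ell^2(X)\to\HH$ satisfies $RCf=f$, then $\|f\|=\|RCf\|\le\|R\|\,\|Cf\|$ supplies the lower frame bound with constant $\|R\|^{-2}$, while the requirement that $R$ act on $Cf\in\ell^2(X)$ (so that the reconstruction is even meaningful) encodes the Bessel/upper bound; together these are the two frame inequalities. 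I expect the main obstacle to be this last equivalence: one must argue carefully that stability genuinely yields \emph{both} frame bounds rather than just the lower one, keeping track of the implicit assumption that the analysis operator maps into $\ell^2(X)$, and invoking the closed-range/open-mapping machinery to pass from ``bounded inverse exists'' to explicit frame constants. The uniqueness equivalence, by contrast, should be routine.
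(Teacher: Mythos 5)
Your proposal is correct. Note that the paper itself gives no proof of this proposition --- it is quoted from \cite{ACMT14} --- but the surrounding text sets up exactly the framework you use: the sample map is the analysis operator $C\colon\HH\to\ell^2(X)$ of the system $\{(A^*)^ng\}_{(g,n)\in X}$ via $\la A^nf,g\ra=\la f,(A^*)^ng\ra$, uniqueness of recovery is injectivity of $C$ (equivalently, completeness of the system), and stable recovery means a bounded $R\colon\ell^2(X)\to\HH$ with $RC=I$, which you correctly identify with the frame property.

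One small sharpening of the step you flag as the main obstacle: the upper (Bessel) bound in the converse direction does not come from open-mapping or closed-range arguments, but from the \emph{closed graph theorem}. The standing assumption in the paper's setup is that $Cf\in\ell^2(X)$ for every $f\in\HH$, so $C$ is everywhere defined; and $C$ is a closed operator, because if $f_k\to f$ in $\HH$ and $Cf_k\to c$ in $\ell^2(X)$, then each coordinate satisfies $(Cf_k)_{(g,n)}=\la f_k,(A^*)^ng\ra\to\la f,(A^*)^ng\ra$, forcing $c=Cf$. Hence $C$ is bounded, which is the Bessel bound; the lower bound then follows from $\|f\|=\|RCf\|\le\|R\|\,\|Cf\|$ as you wrote. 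With that substitution your argument is complete and is the standard proof of the cited result.
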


\subsection {Contribution and organization}
In this paper we consider both: Problem (I) and (II) above, in the general separable Hilbert space setting, and for general normal operators. Problem (I) has already been studied in [3] for the special case when $A\in \mathcal B(\HH)$ is a self-adjoint operator that can be unitarily mapped to an infinite diagonalizable matrix in $\ell^2(\N)$. Thus, all the results in  \cite {ACMT14} are subsumed by the corresponding theorems of this paper. The present paper contains new theorems that are not generalizations of those in  \cite {ACMT14}. In particular those related to Problem (II) and those that are connected to the action of a group of unitary operators.

%Problem (I) above  has been studied in \cite {ACMT14} for the special case when $A\in \mathcal B(\HH)$ is a self-adjoint operator that can be unitarily mapped to an infinite diagonalizable matrix in $\ell^2(\N)$. The results in this paper are for general separable Hilbert space and general normal operators. Thus, all the results in \cite {ACMT14} are subsumed by the corresponding theorems of this paper. However, this paper contains new theorems that are not generalizations of those in   \cite {ACMT14}. In particular those related to Problem (II) described above and those that are connected to the action of a unitary group of operators.

In Section \ref {completeness},  we characterize all countable subsets $\G\subset \HH$ such  that $\left\{A^ng\right\}_{g\in\G,\;0\leq n< \infty}$ is complete in $\HH$ when the operator $A$ is a normal reductive operator (Theorem \ref {mainthm}). These results are also extended to the system of vectors $\left\{A^ng\right\}_{g\in\G,\;0\leq n< L(g)}$, where $L$ is any suitable function from $\G$ to $\N^*$.  However, we also show that the system $\left\{A^ng\right\}_{g\in\G,\;0\leq n< \infty}$ always fails to be a basis for $\HH$ when $A$ is a normal operator (Corollary \ref {NRB}). In fact, if the set $\G\subset \HH$ is finite, and $A$ is a reductive normal operator, then $\left\{A^ng\right\}_{g\in\G,\;0\leq n< L(g)}$ cannot be a basis for $\HH$ for any choice of the function $L$ (Corollary \ref {NotRB}). The  obstruction to being a basis is the redundancy in the form of non-minimality of the set of vectors $\left\{A^ng\right\}_{g\in\G,\;0\leq n< L(g)}$. Thus, the set $\left\{A^ng\right\}_{g\in\G,\;0\leq n< \infty}$, (or $\left\{A^ng\right\}_{g\in\G,\;0\leq n< L(g)}$) may be a frame, but cannot be a basis. It turns out that, in general, it is  difficult for a system of vectors of the form $\left\{A^ng\right\}_{g\in\G,\;0\leq n< \infty}$ to be a frame as is shown in Section \ref {BS}. The difficulty is that the spectrum of $A$ must be very special as can be seen from Theorem \ref {framecond}. Such frames however do exist, as shown by the constructions in \cite {ACMT14}.  Surprisingly, the difficulty becomes an obstruction if we normalize the system of iterations to become $\left\{\frac{A^n g}{\norm{A^n g}}\right\}_{g\in \G,\;n\geq 0}$ when the operator $A$ is self-adjoint as described in Section \ref {SA} (Theorem \ref {normframe}).  In Section \ref {GUO} we apply our results to systems that are generated  by the unitary actions of a discrete group $\Gamma$ on a set of vectors $\G\subset \HH$ which is common in many constructions of wavelets and frames.

\section {Notations and preliminaries}\label{sec2}

 Let $I$ be a countable indexing set, and $\HH$ a separable Hilbert space. Recall that a system of vectors  $\{f_i\}_{i\in I} \subset \HH$ is a Riesz sequence  in $\HH$ if  there exist two constants $m, M>0$ such that
\begin {equation} 
\label {RB}
m\|c\|^2_{\ell^2(I)}\le \|\sum_{i\in I} c_if_i\|^2_\HH\le M\|c\|^2_{\ell^2(I)} \quad \text { for all } c \in \ell^2(I).\end {equation} 
If, in addition,   $\{f_i\}_{i\in I}  \subset \HH$ is complete then it is called a Riesz basis for $\HH$. 

 A sequence $\{f_i\}_{i\in I}  \subset \HH$ is said to be a Bessel system in $\HH$ if there exists a constant $\beta \ge 0$ such that

$$\sum_{i\in I} |\la f, f_i\ra |^2\le \beta \|f\|^2_\HH \quad \text { for all } f \in \HH,$$
and it is said to be  a frame for $\HH$ if there exist two constants $\alpha, \beta>0$ such that

$$ \alpha \|f\|^2_\HH \le \sum_{i\in I} |\la f, f_i\ra|^2\le \beta \|f\|^2_\HH \quad \text { for all } f \in \HH.$$
If $\alpha=\beta$ we say that the system is a Parseval frame.
The notion of frames  introduced by Duffin and Schaeffer \cite {DS52}, generalizes the notion of Riesz bases. In particular, if  $\{f_i\}_{i\in I}  \subset \HH$ is a frame for $\HH$, then any vector $f\in \HH$ has the representation $f=\sum_{i\in I}\la f,f_i\ra \tilde f_i$ where $\{\tilde f_i \}_{i\in I} \subset \HH$ is a dual frame. It is well-known that a Riesz basis is a frame but the converse is not necessarily true because  the vectors in a frame set $\{f_i \}_{i\in I} \subset \HH$ may have linear dependencies.  Further properties of frames, bases, and Bessel sequences can be found in \cite {DL00,C08,Heil11}.

One of the main tools that we use is the spectral theorem with multiplicity for normal operators described below. 

Let $\mu$ be a non-negative regular Borel measure on $\mathbb{C}$ with compact support $K$. Denote by $N_{\mu}$ the operator
$$N_{\mu}f(z)=zf(z),\;z\in K$$
acting on functions $ f \in L^2(\mu)$ (i.e. $f:\CC \rightarrow \CC$ , measurable with  $\int_\CC |f(z)|^2 d\mu(z) < \infty.$)

For a Borel non-negative measure $\mu$, we will denote by $[\mu]$ the class of Borel measures that are mutually absolutely continuous with  $\mu$.

\begin{theorem} [Spectral theorem with multiplicity]\label{decomp} For any normal operator $A$ on $\HH$ there are mutually singular compactly supported non-negative Borel measures  $\mu_j,\;1\leq j\leq\infty$, such that $A$ is  equivalent to the operator
$$ N^{(\infty)}_{\mu_\infty}\oplus N_{\mu_1}\oplus N^{(2)}_{\mu_2}\oplus\cdots$$
	i.e. there exists a unitary transformation  
$$U:\HH\rightarrow(L^2(\mu_{\infty}))^{(\infty)}\oplus L^2(\mu_1)\oplus (L^2(\mu_2))^{(2)}\oplus\cdots$$ 
such that	
	\begin{equation} \label{repres}
	UAU^{-1}=N^{(\infty)}_{\mu_{\infty}}\oplus N_{\mu_1}\oplus N^{(2)}_{\mu_2}\oplus\cdots.
	\end{equation}	
Moreover, if $M$ is another normal operator with corresponding measures $\nu_\infty,\nu_1,\nu_2,\dots$ then $M$ is  unitarily equivalent to $A$ if and only if  $[\nu_j]=[\mu_j],\;j=1,\dots,\infty$.
\end{theorem}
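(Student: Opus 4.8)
The plan is to build the representation in three stages --- first for cyclic operators, then by decomposing $\HH$ into cyclic pieces, and finally by regrouping those pieces according to a multiplicity function --- and to treat the uniqueness assertion separately.

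First I would settle the cyclic case. Suppose $A$ is normal and possesses a cyclic vector $e$, meaning that the closed linear span of $\{p(A,A^*)e : p \text{ a polynomial in two variables}\}$ equals $\HH$. Letting $E$ denote the projection-valued spectral measure of $A$ furnished by the basic spectral theorem, I would set $\mu(\Delta)=\la E(\Delta)e,e\ra$; this is a non-negative regular Borel measure supported on the spectrum $\sigma(A)$, which is compact since $A$ is bounded. The assignment $p(A,A^*)e\mapsto p(z,\bar z)$ then extends to a unitary $U:\HH\to L^2(\mu)$ with $UAU^{-1}=N_\mu$, the density of polynomials in $L^2(\mu)$ being exactly the statement that $e$ is cyclic. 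This gives $A\cong N_\mu$ in the cyclic case.

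Next I would handle a general normal $A$. A Zorn's lemma argument produces an orthogonal decomposition $\HH=\bigoplus_k \HH_k$ into reducing subspaces (invariant under both $A$ and $A^*$), each carrying a cyclic vector, so that $A|_{\HH_k}\cong N_{\mu^{(k)}}$ for some measure $\mu^{(k)}$; separability of $\HH$ makes the index set countable. To pass from this unstructured sum $\bigoplus_k N_{\mu^{(k)}}$ to the claimed form, I would introduce a single dominating finite measure $\mu=\sum_k c_k\mu^{(k)}$ with suitable $c_k>0$, so that $\mu^{(k)}\ll\mu$ with Radon--Nikodym derivative $f_k=d\mu^{(k)}/d\mu$. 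The multiplicity function $m(z)=\#\{k:f_k(z)>0\}$, taking values in $\{0,1,2,\dots,\infty\}$, is Borel measurable, and its level sets $\Delta_n=\{z:m(z)=n\}$, $1\le n\le\infty$, are disjoint; defining $\mu_n=\mu|_{\Delta_n}$ yields mutually singular measures. The unitaries $g\mapsto\sqrt{f_k}\,g$ identify $L^2(\mu^{(k)})$ with $L^2(\mu;\{f_k>0\})$ while intertwining the multiplication operators, after which $\bigoplus_k N_{\mu^{(k)}}$ becomes a direct integral whose fiber over $z$ has dimension $m(z)$. Restricting to $\Delta_n$ and choosing a measurable enumeration of the (exactly $n$) active indices identifies this piece with $(L^2(\mu_n))^{(n)}$. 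This measurable-selection step, which upgrades a fiberwise isomorphism to a genuine unitary, is the most delicate point of the existence half.

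Finally I would prove uniqueness. The easy direction is immediate: if $[\nu_j]=[\mu_j]$, then multiplication by $\sqrt{d\nu_j/d\mu_j}$ gives $N_{\nu_j}\cong N_{\mu_j}$, and passing to direct sums and $n$-fold amplifications gives $M\cong A$. The reverse direction is the crux of the theorem, and I must show that the sequence of measure classes $[\mu_j]$ is a complete unitary invariant. For this I would argue that (i) the class $[\sum_j\mu_j]$ is exactly the scalar spectral measure class of $A$, hence preserved under unitary equivalence since it is characterized intrinsically by $E(\Delta)=0\iff\mu(\Delta)=0$; and (ii) the multiplicity function $m$ is itself an invariant, which I would establish by recovering $m$ from $A$ alone --- for instance through the structure of the commutant $\{A\}'$, or by localizing with the spectral projections $E(\Delta)$ over small sets $\Delta$ and reading off the local dimension. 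Since $[\mu_j]=[\mu|_{\{m=j\}}]$ is determined by the base class together with $m$, invariance of both forces $[\nu_j]=[\mu_j]$. I expect step (ii) --- showing that the multiplicity function survives unitary equivalence --- to be the principal obstacle, since it is precisely what distinguishes the multiplicity theory from the bare spectral theorem.
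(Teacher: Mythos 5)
Your proposal cannot be compared against an in-paper argument, because the paper does not prove Theorem \ref{decomp} at all: it is quoted as background, with the proof deferred to Conway (Ch.~IX, Theorem 10.16) and Conway's operator theory book (Theorem 9.14). Judged on its own terms, the existence half of your plan is a correct outline of the standard argument: the cyclic case via $\mu(\Delta)=\la E(\Delta)e,e\ra$ and the unitary $p(A,A^*)e\mapsto p(z,\bar z)$; the Zorn/separability decomposition into star-cyclic reducing subspaces; and the regrouping of $\bigoplus_k N_{\mu^{(k)}}$ by the level sets of $m(z)=\#\{k: f_k(z)>0\}$, with the measurable enumeration of active indices that you correctly single out as the delicate point. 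The easy direction of uniqueness (mutually equivalent measures give unitarily equivalent operators via multiplication by $\sqrt{d\nu_j/d\mu_j}$) is also right.

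The genuine gap is in the hard direction of uniqueness, step (ii), which you name but do not prove, and one of the two routes you suggest fails as stated. ``Localizing with the spectral projections $E(\Delta)$ and reading off the local dimension'' cannot work literally: if $\mu$ is non-atomic, then $\dim E(\Delta)\HH=\infty$ for every $\Delta$ of positive measure, whatever the multiplicity, so there is no local dimension to read off. What is actually needed is one of the following. Either the decomposable-operator route: a unitary $U$ intertwining $A$ and $M$ carries $E_A(\Delta)$ to $E_M(\Delta)$, hence commutes with multiplication by every bounded Borel function once both operators are realized over the common base measure class from your step (i); a bounded module map over $L^\infty(\mu)$ between direct integrals is decomposable, i.e.\ acts by unitaries $U(z)$ fiberwise a.e., forcing the fiber dimensions --- the multiplicity functions --- to agree $\mu$-a.e. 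Or Conway's more elementary route via intertwining lemmas: a nonzero operator intertwining $N_\mu$ and $N_\nu$ forces $\mu$ and $\nu$ not to be mutually singular, and $N_\mu^{(n)}\cong N_\mu^{(m)}$ forces $n=m$. That last fact is itself proved by a multicyclicity count: any countable generating set of $N_\mu^{(m)}$ must have fibers spanning $\ell^2(\Om_m)$ for $\mu$-a.e.\ $z$ --- precisely the fiberwise-completeness statement (Kriete's lemma) that this paper invokes as Lemma \ref{auxlem} in the proof of Theorem \ref{mainthm} --- so fewer than $m$ vectors can never generate. Your localization idea can then be repaired by replacing ``local dimension'' with local multicyclicity: the minimal number of generators of $A$ restricted to $E(\Delta)\HH$ equals the essential supremum of $m$ on $\Delta$, and knowing this unitary invariant for all Borel $\Delta$ recovers $m$ up to $\mu$-null sets. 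Without one of these ingredients, the claim that $[\mu_j]$ is a complete unitary invariant remains unproved, and that claim is the substance of the theorem beyond the bare spectral theorem.
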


A proof of the theorem can be found in \cite{conway} (Ch. IX, Theorem 10.16) and \cite{conway1} (Theorem 9.14). 

Since the measures $\mu_j$ are mutually singular, there are mutually disjoint Borel sets $\{\mathcal{E}_j\}$ such that $\mu_j$ is concentrated on $\mathcal{E}_j$ for every $1\leq j\leq \infty$. 

We will define the scalar measure $\mu$, (usually called {\it the scalar spectral measure) }associated with the normal operator $A$ to be
\begin{equation} \label{themeas}
\mu:=\sum_{1\leq j\leq\infty} \mu_j.
\end{equation}

The Borel function function $J:\CC\rightarrow \N^*\cup\{0\}$ given by
\begin{equation}\label{multfunc}
J(z)=\begin{cases} 
j, & z\in \mathcal{E}_j \\
0, & otherwise 
\end{cases}
\end{equation}
		is called multiplicity function of the operator $A$. 
%	Thus every normal  operator is uniquely, up to a unitary equivalence, determined by a pair $(n, [\mu])$ where $n:\CC\rightarrow \{1,2,\dots,\infty\}$ is a $\mu$ measurable function and $[\mu]$ is the class of  measures 

  From Theorem \ref{decomp}, every normal  operator is uniquely determined, up to a unitary equivalence,  by the pair $([\mu],J)$. 
	
For $j\in \N$, define  $\Om_j$ to be the set $ \{1,...,j\}$  and $\Om_\infty$ to be the set $\N$. Note that $\ell^2(\Om_j) \cong \CC^j$, for $j\in \N$, and $\ell^2(\Omega_\infty) = \ell^2(\N).$ For $j=0$ we define $\ell^2(\Omega_0)$ to be the trivial space $\{0\}$.
	
%We denote by $\N^* := \N\cup \infty$. 
Let $\mathcal{W}$ be the Hilbert space 
$$\mathcal{W} :=(L^2(\mu_\infty))^{(\infty)}\oplus L^2(\mu_1)\oplus (L^2(\mu_2))^{(2)}\oplus\cdots$$ 
associated to the operator $A$ and
let $U:\HH \rightarrow \mathcal{W}$ be the unitary operator given by Theorem~\ref{decomp}.  If $g\in \HH$, we will denote by $\wi{g}$
the image of $g$ under $U$. Since $\wi{g} \in \mathcal{W}$ we have  $\wi{g} = (\wi{g}_j)_{j\in \N^*}$, where 
$\wi{g}_j$ is the restriction of $\wi g$ to $(L^2(\mu_j))^{(j)}$. Thus, for any $j\in \N^*$, $ \wi{g}_j $ is a function from $ \CC $ to $ \ell^2(\Omega_j) $ and
$$\sum_{j\in \N^*} \quad \int_\CC \|\wi{g}_j(z)\|_{\ell^2(\Om_j)}^2d\mu_j(z) <\infty .$$
Let $P_j$ be the projection defined for every  $\wi g \in \mathcal W$  by $\wi f=P_j\wi g$ where $\wi f_j=\wi g_j$ and $\wi f_k=0$ for $k\ne j$. 

%that send $\wi g$ to $P_j\wi g$ where $P_j\wi g $$L =P_j\wi g$ with $P_j$ being the projection of $ \mathcal{W} $ to  $(L^2(\mu_j))^{(j)}$. 

   Let $E$ be the spectral measure for the normal operator $A$. Then for every $\mu$-measurable set $G\subseteq \CC$ and vectors $f,g$ in $\HH$ we have the following formula
$$\la E(G)f,g\ra _{\HH}\; = \int_G\left[\sum_{1\leq j\leq\infty} \mathbbm{1}_{\mathcal{E}_j}(z)\la \wi f_j(z),\wi g_j(z)\ra _{\ell^2(\Om_j)}\right]d\mu(z),$$
which relates the spectral measure of $A$ with the scalar spectral measure of $A$.

In \cite{kriete} and \cite{krieteabr} the spectral multiplicity of multiplication operator is computed.

 As a generalization of self-adjoint operators, we will consider normal reductive operators. Reductive operators were first studied by P. Halmos \cite{halmos}  and J. Wermer \cite{wermer}.
 
 \begin{definition}
 	A closed subspace $\mathcal{V}\subseteq \HH$ is called reducing for the operator $A$ if both $\mathcal{V}$ and its orthogonal complement $\mathcal{V}^\perp$ are invariant subspaces of $A$.
 \end{definition}
 
 Notice that, $\mathcal{V}\subseteq \HH$ being reducing subspace for $A$ is equivalent to $\mathcal{V}$ being an invariant subspace both for $A$ and its adjoint $A^*$ and also equivalent to $AP_{\mathcal{V}}=P_{\mathcal{V}}A$ where $ P_{\mathcal{V}} $ is the projection operator onto $ \mathcal{V} $.
 
\begin{definition}
An operator $A$ is called reductive if every invariant subspace of $A$ is reducing.
\end{definition}

It is not known whether every reductive operator is normal. In fact, every reductive operator being normal is equivalent to the veracity of the long standing  {\em invariant subspace conjecture}, which states that every bounded operator on a separable Hilbert space has a non-trivial closed invariant subspace \cite{subspconj}.

\begin{proposition} \label{prop1}\cite {Kub} A normal operator is reductive if and only if its restriction to every  invariant subspace is normal.	
\end{proposition}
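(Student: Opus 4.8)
The plan is to analyze $A$ through its $2\times 2$ operator matrix with respect to the orthogonal decomposition $\HH=\mathcal{V}\oplus\mathcal{V}^{\perp}$ determined by an invariant subspace $\mathcal{V}$, and to read off normality of the individual blocks. Throughout I use that $\mathcal{V}$ reduces $A$ precisely when $A$ is block-diagonal in this splitting (both off-diagonal blocks vanish), and that the adjoint of the restriction $A|_{\mathcal{V}}$, computed as an operator on $\mathcal{V}$, is the compression $P_{\mathcal{V}}A^{*}|_{\mathcal{V}}$, which is exactly the $(1,1)$-entry of the matrix of $A^{*}$.

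For the forward implication, suppose $A$ is normal and reductive. Given an invariant subspace $\mathcal{V}$, reductivity forces $\mathcal{V}$ to be reducing, so $AP_{\mathcal{V}}=P_{\mathcal{V}}A$ and the matrix of $A$ is $\begin{pmatrix} B & 0 \\ 0 & D\end{pmatrix}$ with $B=A|_{\mathcal{V}}$. Then the matrix of $A^{*}$ is $\begin{pmatrix} B^{*} & 0 \\ 0 & D^{*}\end{pmatrix}$, and equating the $(1,1)$-blocks of $AA^{*}=A^{*}A$ gives $BB^{*}=B^{*}B$; hence $A|_{\mathcal{V}}$ is normal. This direction is routine.

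The substance is the converse. Assume $A$ is normal and that its restriction to every invariant subspace is normal, and fix an invariant $\mathcal{V}$; I want to show $A^{*}\mathcal{V}\subseteq\mathcal{V}$. Invariance makes the lower-left block vanish, so $A=\begin{pmatrix} B & C \\ 0 & D\end{pmatrix}$ with $B=A|_{\mathcal{V}}$, whence $A^{*}=\begin{pmatrix} B^{*} & 0 \\ C^{*} & D^{*}\end{pmatrix}$. Computing both products and equating the $(1,1)$-blocks of $AA^{*}=A^{*}A$ produces $BB^{*}+CC^{*}=B^{*}B$. Here the hypothesis enters decisively: since $B=A|_{\mathcal{V}}$ is normal, $BB^{*}=B^{*}B$, so $CC^{*}=0$ and therefore $C=0$. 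Thus $A$ is block-diagonal, $\mathcal{V}$ reduces $A$, and $A$ is reductive.

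The main obstacle to watch is the converse, and more precisely the bookkeeping that isolates the extra positive term $CC^{*}$ in the $(1,1)$-block of $AA^{*}$: it is the normality of the restriction that forces this term to vanish, and hence $C=0$. I would take care to justify the step $CC^{*}=0\Rightarrow C=0$ (from $\|C^{*}x\|^{2}=\langle CC^{*}x,x\rangle=0$ for all $x$) and to confirm that the adjoint of a restriction to a merely invariant subspace is the compression above, so that the block arithmetic is valid even before $\mathcal{V}$ is known to reduce $A$.
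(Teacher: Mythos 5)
Your proof is correct. Note that the paper does not prove this proposition at all --- it is imported from the literature (the citation \cite{Kub}) --- so there is no argument in the text to compare against; your block-matrix computation supplies a self-contained proof of the cited fact, and it is the standard one. Both directions are sound: the forward direction is the routine observation that a direct summand of a normal operator is normal, and the converse correctly isolates the positive term $CC^{*}$ in the $(1,1)$-block of $AA^{*}=A^{*}A$, where normality of $B=A|_{\mathcal{V}}$ forces $CC^{*}=0$ and hence $C=0$, i.e.\ $\mathcal{V}$ reduces $A$. Your two points of care are exactly the right ones: the identification $(A|_{\mathcal{V}})^{*}=P_{\mathcal{V}}A^{*}|_{\mathcal{V}}$ (valid for merely invariant $\mathcal{V}$, since $\langle Ax,y\rangle=\langle x,P_{\mathcal{V}}A^{*}y\rangle$ for $x,y\in\mathcal{V}$) and the implication $CC^{*}=0\Rightarrow C=0$. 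One could add, for completeness, that this argument actually proves the slightly stronger local statement: for a normal $A$, an invariant subspace $\mathcal{V}$ reduces $A$ if and only if $A|_{\mathcal{V}}$ is normal; the proposition follows by quantifying over all invariant subspaces.
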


\begin{proposition}[\cite{wermer}] \label{prop2} Let $A$  be a normal operator on the Hilbert space $\HH$ and let $\mu_j$ be the measures in the representation \eqref{repres}  of $A$. Let $\mu$ be as in \eqref {themeas}.
Then $A$  is reductive if and only if for any two vectors $f,g\in \HH$ 
	$$\int_\CC z^n\left[\sum_{1\leq j\leq\infty} \mathbbm{1}_{\mathcal{E}_j}(z)\la \wi g_j(z),\wi f_j(z)\ra _{\ell^2(\Om_j)}\right]d\mu(z)=0$$
	for every $n\geq 0$ implies $\mu_j$-a.e. $\la \wi g_j(z),\wi f_j(z)\ra_{\ell^2(\Om_j)}=0$ for ever $j\in \N^*$.
\end{proposition}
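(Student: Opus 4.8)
The plan is to translate both the hypothesis and the conclusion of the implication into statements about two natural subspaces attached to a vector $g$, and then to recognize the displayed condition as the assertion that these two subspaces coincide for every $g$. Throughout I write $h(z):=\sum_{1\le j\le\infty}\mathbbm{1}_{\mathcal{E}_j}(z)\la \wi g_j(z),\wi f_j(z)\ra_{\ell^2(\Om_j)}$ for the bracketed integrand.

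First I would rewrite the hypothesis. Under the unitary $U$ of Theorem~\ref{decomp} the operator $A^n$ acts as multiplication by $z^n$, and since the $\mathcal{E}_j$ are mutually disjoint with $\mu_j=\mathbbm{1}_{\mathcal{E}_j}\mu$, one computes $\la A^ng,f\ra_\HH=\int_\CC z^n h(z)\,d\mu(z)$. Hence ``$\int_\CC z^n h\,d\mu=0$ for all $n\ge 0$'' says exactly that $f\perp A^ng$ for every $n\ge 0$, i.e. $f\in M_g^\perp$, where $M_g:=\overline{\spn}\{A^ng:n\ge 0\}$ is the smallest $A$-invariant closed subspace containing $g$. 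Next I would rewrite the conclusion using the spectral-measure formula recorded just before the proposition: it gives $\la E(G)g,f\ra_\HH=\int_G h\,d\mu$ for every Borel $G$, so ``$\la \wi g_j(z),\wi f_j(z)\ra=0$ $\mu_j$-a.e. for every $j$'' is equivalent to ``$h=0$ $\mu$-a.e.'', which is the same as $\la E(G)g,f\ra=0$ for all Borel $G$, i.e. $f\in R_g^\perp$, where $R_g:=\overline{\spn}\{E(G)g:G\text{ Borel}\}$. Since $A$ is normal, $A^*$ lies in the von Neumann algebra generated by the spectral projections $\{E(G)\}$, so $R_g$ is precisely the smallest \emph{reducing} subspace containing $g$; in particular $M_g\subseteq R_g$ always.

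With these identifications the statement collapses quickly. Fixing $g$, the implication ``$f\in M_g^\perp\Rightarrow f\in R_g^\perp$ for all $f$'' says $M_g^\perp\subseteq R_g^\perp$, equivalently $R_g\subseteq M_g$, which combined with $M_g\subseteq R_g$ gives $M_g=R_g$. Thus the displayed condition (quantified over all $f,g$) holds if and only if $M_g=R_g$ for every $g\in\HH$. I would then close the loop with reductivity: if $A$ is reductive, then $M_g$, being invariant, is reducing and contains $g$, forcing $R_g\subseteq M_g$ and hence $M_g=R_g$; conversely, if $M_g=R_g$ for every $g$, then for any invariant subspace $M$ and any $g\in M$ we have $A^*g\in R_g=M_g\subseteq M$, so $M$ is $A^*$-invariant, i.e. reducing, and therefore $A$ is reductive.

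The computational heart, and the step I expect to require the most care, is the pair of translations in the second paragraph: verifying $\la A^ng,f\ra=\int_\CC z^n h\,d\mu$ directly from the multiplicity representation (keeping track of the disjointness of the $\mathcal{E}_j$ and the relation $\mu_j=\mathbbm{1}_{\mathcal{E}_j}\mu$), and justifying that $R_g$ is exactly the smallest reducing subspace containing $g$, which relies on normality to place $A^*$ in the algebra generated by $\{E(G)\}$ (via Fuglede's theorem). Once those two identifications are secured, the equivalence $R_g\subseteq M_g\iff M_g=R_g$ and the final characterization of reductivity are elementary and require no analytic approximation arguments.
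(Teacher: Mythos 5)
Your proposal is correct, but there is nothing in the paper to compare it against: the paper states Proposition \ref{prop2} as a quoted result from Wermer's work on reductive normal operators and gives no proof, only the remark that the condition is equivalent to the implication $\int z^n h\,d\mu=0$ for all $n\geq 0$ and $h\in L^1(\mu)$ $\Rightarrow$ $h=0$ $\mu$-a.e. Your argument is a legitimate self-contained proof: the translation $\la A^n g,f\ra=\int_\CC z^n h\,d\mu$ is exactly the computation the paper itself performs in the proof of Theorem \ref{mainthm} (using $\mu_j=\mathbbm{1}_{\mathcal{E}_j}\mu$ and the mutual disjointness of the $\mathcal{E}_j$), the identification of the conclusion with $\la E(G)g,f\ra=0$ for all Borel $G$ uses the spectral-measure formula recorded in Section \ref{sec2} together with the standard fact that an $L^1(\mu)$ function whose integral over every Borel set vanishes is null (your $h$ is integrable by Cauchy--Schwarz), and the reduction of reductivity to the equality $M_g=R_g$ of the cyclic invariant subspace with the cyclic reducing subspace, for every $g$, is sound in both directions --- in particular your observation that reductivity only needs to be tested on cyclic invariant subspaces is the right mechanism. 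One small inaccuracy: Fuglede's theorem is not what puts $A^*$ in the von Neumann algebra generated by $\{E(G)\}$; that is immediate from $A^*=\int \bar z\,dE(z)$ being a norm limit of linear combinations of spectral projections. What the minimality of $R_g$ actually requires is the reverse containment, namely that each $E(G)$ lies in $\{A,A^*\}''$, so that the projection onto a reducing subspace (which commutes with $A$ and $A^*$) commutes with every $E(G)$ and hence the subspace absorbs all $E(G)g$. That is a standard consequence of the Borel functional calculus (or the bicommutant theorem), not of Fuglede; with that correction the proof is complete.
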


Note that the property in the above proposition is equivalent to the implication 
	$$\int_\CC z^nh(z)d\mu(z)=0\; \text{and}\;h\in L^1(\mu)\Rightarrow  h=0\;\mu-\text{a.e.}$$
%	In fact, being reductive is equivalent to polynomials being weak-* dense in $L^\infty(\mu)$ according to \cite{sarason} p. 14.
%	%\textcolor{blue}{what we wrote isn't exactly saying this? so I guess no need to put a reference then just say "i.e. being reductive is equivalent..." }. 
%	This, in particular, means that the reductivity is a property of $[\mu]$ only and it is independent of the multiplicity function $J(z)$.

As proved in \cite{wermer}, being reductive is not entirely a property of the spectrum: it is possible to find  two operators with the same spectrum such that one is reductive the other is not. However the following sufficient condition holds

	\begin{proposition}[\cite{wermer}] \label{emptintspec}
		Let $A$ be a normal operator on $\HH$ whose spectrum $ \sigma (A)$ has empty interior and $\CC - \sigma(A)$ is connected. Then $A$ is reductive.
	\end{proposition}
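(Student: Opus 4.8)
\emph{Proof proposal.} The plan is to verify the analytic criterion for reductivity recorded in the remark immediately following Proposition~\ref{prop2}, namely the implication
$$\int_\CC z^n h(z)\, d\mu(z) = 0 \text{ for all } n \geq 0, \; h \in L^1(\mu) \; \Rightarrow \; h = 0 \;\; \mu\text{-a.e.},$$
where $\mu$ is the scalar spectral measure of $A$. The bridge to the geometric hypotheses on the spectrum is the standard fact that the support of the scalar spectral measure coincides with the spectrum, $\supp\mu = \sigma(A) =: K$. Thus $K$ is a compact subset of $\CC$ with empty interior whose complement $\CC\setminus K$ is connected --- precisely the configuration controlled by the classical polynomial approximation theorems.

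First I would fix an arbitrary $h\in L^1(\mu)$ with $\int_\CC z^n h\,d\mu = 0$ for every $n\geq 0$, and pass to the finite complex Borel measure $d\nu := h\,d\mu$, which is supported on $K$ and has finite total variation since $h\in L^1(\mu)$. The vanishing of all the moments says exactly that $\nu$ annihilates every polynomial in $z$. The crucial step is then to invoke Lavrentiev's theorem: the uniform closure of the polynomials inside $C(K)$ is all of $C(K)$ if and only if $K$ has empty interior and connected complement. Under our hypotheses both conditions hold, so the polynomials are dense in $C(K)$ in the supremum norm.

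To finish, I would view $\nu$ as a bounded linear functional on $C(K)$ via the Riesz representation theorem. Since it vanishes on the dense subspace of polynomials, it vanishes on all of $C(K)$, and a finite measure annihilating every continuous function on its compact support must be the zero measure. Hence $\nu = 0$, which forces $h = 0$ $\mu$-a.e. By Proposition~\ref{prop2} this establishes that $A$ is reductive. The main obstacle --- and essentially the whole content of the argument --- is recognizing that the two stated hypotheses are nothing other than the necessary and sufficient conditions in Lavrentiev's theorem, so that the geometric conditions on $\sigma(A)$ translate cleanly into the density statement driving the proof; the remaining steps are routine duality between $C(K)$ and its space of finite measures.
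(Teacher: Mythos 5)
Your proposal is correct. Note, however, that the paper does not actually prove this proposition; it is imported verbatim from Wermer's paper \cite{wermer}, so there is no in-paper argument to compare against. Your reconstruction is sound: the scalar spectral measure $\mu$ is supported on $\sigma(A)$, the hypotheses on $\sigma(A)$ are exactly the Lavrentiev conditions for density of the analytic polynomials in $C(\sigma(A))$, the measure $d\nu = h\,d\mu$ is a finite regular complex Borel measure on the compact set $\sigma(A)$ (finiteness from $h \in L^1(\mu)$, regularity automatic for finite Borel measures on $\CC$), and the duality $C(K)^* \cong M(K)$ then forces $\nu = 0$, which is the criterion in the remark following Proposition~\ref{prop2}. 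One remark worth making: there is an even shorter route that bypasses Proposition~\ref{prop2} and measure duality entirely. By Lavrentiev's theorem choose polynomials $p_n$ with $p_n(z) \to \bar{z}$ uniformly on $K = \sigma(A)$; the functional calculus for normal operators gives $\|p_n(A) - A^*\| = \sup_{z \in \sigma(A)}|p_n(z)-\bar{z}| \to 0$, so $A^*$ is a norm limit of polynomials in $A$. Any closed invariant subspace of $A$ is invariant under each $p_n(A)$, hence under $A^*$, hence reducing. Your approach buys a verification routed through the paper's own reductivity criterion (which is in the spirit of how the paper uses Proposition~\ref{prop2} elsewhere), while the functional-calculus argument is more elementary and makes the role of Lavrentiev's theorem completely transparent; both are valid.
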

	
	\begin{corollary}
		Every self-adjoint operator on a Hilbert space is reductive.
	\end{corollary}
	The fact that self-adjoint operators are reductive is easily derived without the use of Proposition \ref {emptintspec}.  However, to see how this fact follows form Proposition \ref {emptintspec}, simply note that for a self-adjoint operator $A$, $\sigma (A)$ is a compact subset of $\R$ hence, it has empty interior (as a subset of $\CC$), and $\CC - \sigma(A)$ is connected.
	
	Also the following necessary condition for being reductive holds. 
	\begin{prop}[\cite{scroggs}]
Let $A$ be  a normal operator. If the interior of $\sigma(A)$ is not empty then $A$ is not reductive.
	\end{prop}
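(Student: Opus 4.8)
The plan is to use the reformulation of reductivity stated right after Proposition~\ref{prop2}: if $\mu$ is the scalar spectral measure of the normal operator $A$, then $A$ is reductive if and only if the only $h\in L^1(\mu)$ satisfying $\int_\CC z^n h(z)\,d\mu(z)=0$ for all $n\ge 0$ is $h=0$ ($\mu$-a.e.). Hence, to show that $A$ is \emph{not} reductive it is enough to produce a single nonzero $h\in L^1(\mu)$ all of whose analytic moments vanish. Since $A$ is normal, the spectral theorem gives $\sigma(A)=\supp\mu$, so the hypothesis that $\sigma(A)$ has nonempty interior yields an open disk $\Delta=\Delta(\lambda_0,r)\subseteq\supp\mu$.

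First I would localize to $\Delta$. As $\Delta\subseteq\supp\mu$, the restriction $\nu:=\mu|_{\overline\Delta}$ is a nonzero finite positive measure with $\supp\nu=\overline\Delta$, and any $h$ that vanishes off $\overline\Delta$ satisfies $\int z^n h\,d\mu=\int z^n h\,d\nu$ for every $n$. Thus it suffices to find a nonzero $h\in L^1(\nu)$ with $\int z^n h\,d\nu=0$ for all $n\ge 0$. Writing $h=\overline g$ and using that $\nu$ is finite (so $L^2(\nu)\subseteq L^1(\nu)$), this is equivalent to exhibiting a nonzero $g\in L^2(\nu)$ orthogonal to every monomial $z^n$; in other words, I must show that the analytic polynomials are \emph{not} dense in $L^2(\nu)$. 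Conceptually this is exactly the statement that the $M_z$-invariant subspace $\overline{\spn\{z^n:n\ge 0\}}=P^2(\nu)$ is a proper subspace which fails to reduce $M_z$, since a reducing subspace of a cyclic normal operator that contains the constants must be the whole space.

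The heart of the matter is therefore the classical approximation fact: if $\supp\nu$ has nonempty interior, the analytic polynomials are not dense in $L^2(\nu)$. The mechanism is the incompatibility of polynomial approximation with the rigidity of holomorphic functions on $\Delta$. In the model case $\nu=dA|_{\overline\Delta}$ (planar Lebesgue measure) this is transparent: for $\phi\in C_c^\infty(\Delta)$ one has $\int z^n\,\bar\partial\phi\,dA=0$ because $z^n$ is holomorphic and $\phi$ is compactly supported, so $\bar\partial\phi\,dA$ is a nonzero measure, absolutely continuous with respect to area, that annihilates every $z^n$; equivalently $P^2(dA|_{\overline\Delta})$ is the Bergman space, a proper closed subspace of $L^2$. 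The same $\bar\partial$ construction, or the sub-mean-value inequality $|p(\lambda_0)|\le(\pi\rho^2)^{-1/2}\norm{p}_{L^2(\Delta(\lambda_0,\rho),dA)}$ for analytic polynomials $p$, handles any $\nu$ whose area-absolutely-continuous part is bounded below on a subdisk, by producing there a bounded point evaluation and hence a nonzero annihilator.

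The main obstacle is the case in which $\nu$ is singular with respect to area measure — for example $\nu$ purely atomic with atoms dense in $\overline\Delta$, which still has $\supp\nu=\overline\Delta$. Then no annihilator is automatically absolutely continuous with respect to $\nu$, and the non-density of the analytic polynomials in $L^2(\nu)$ is genuinely nontrivial: it is the content of the theory of bounded point evaluations for $P^2(\nu)$-spaces, in its definitive form Thomson's theorem, which shows that a support with nonempty interior forces $P^2(\nu)\subsetneq L^2(\nu)$. I would invoke this theory directly, or follow the original complex-analytic argument of \cite{scroggs}, in which the disk is replaced by a suitable smoothly bounded subregion and the annihilating density is built from the Cauchy transform. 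Granting non-density, conjugation returns the required nonzero $h\in L^1(\mu)$ with vanishing analytic moments, and the criterion following Proposition~\ref{prop2} shows that $A$ is not reductive.
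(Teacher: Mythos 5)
Your reduction starts soundly: by the remark after Proposition~\ref{prop2}, non-reductivity is equivalent to the existence of a nonzero $h\in L^1(\mu)$ with $\int z^n h\,d\mu=0$ for all $n\ge 0$, and an annihilator in $L^2(\nu)$ for $\nu=\mu|_{\overline{\Delta}}$ would indeed suffice, since $L^2(\nu)\subseteq L^1(\nu)\subseteq L^1(\mu)$. The fatal step is what you call ``the classical approximation fact'': it is \emph{false} that $\supp\nu$ having nonempty interior forces the analytic polynomials to be non-dense in $L^2(\nu)$, and Thomson's theorem does not assert this --- it goes essentially in the opposite direction (if $P^2(\nu)\neq L^2(\nu)$, then analytic bounded point evaluations exist; it places no constraint on the supports of measures with $P^2=L^2$). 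Concretely, let $\{a_j\}$ be a dense sequence of distinct points in $\overline{\D}_1$ and set $\nu=\sum_j c_j\delta_{a_j}$, choosing $c_j>0$ recursively so small that $c_j\,|q_{k,n}(a_j)|^2\le 2^{-j}$ for every Lagrange polynomial $q_{k,n}(z)=\prod_{i\le n,\, i\neq k}(z-a_i)/(a_k-a_i)$ with $k\le n<j$ (finitely many conditions at each stage $j$). Then $\|q_{k,n}-\mathbbm{1}_{\{a_k\}}\|^2_{L^2(\nu)}=\sum_{j>n}c_j|q_{k,n}(a_j)|^2\le 2^{-n}\rightarrow 0$, so every $\mathbbm{1}_{\{a_k\}}$ lies in the $L^2(\nu)$-closure of the polynomials and hence $P^2(\nu)=L^2(\nu)$, even though $\supp\nu=\overline{\D}_1$. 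Such a $\nu$ is itself (a representative of) the scalar spectral measure of a normal diagonal operator whose spectrum is $\overline{\D}_1$, i.e.\ exactly a case the proposition must cover, and there your method produces no annihilator at all.

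The gap cannot be repaired inside $L^2$: reductivity is governed by the $L^1(\mu)$-annihilator, equivalently by weak-$*$ density of the polynomials in $L^\infty(\mu)$, and in the example above nonzero annihilators exist in $L^1(\mu)$ but none exist in $L^2(\mu)$ --- the inclusion $L^2\subseteq L^1$ only helps in one direction. A correct argument must exploit the $L^\infty$/weak-$*$ structure, where the thickness of the support really bites. For instance: since $\Delta\subseteq\supp\mu$, every open subset of $\Delta$ has positive $\mu$-measure, so for continuous functions a $\mu$-a.e.\ bound on $\Delta$ is a pointwise bound, and in particular $\sup_\Delta|p|\le\|p\|_{L^\infty(\mu)}$ for every polynomial $p$. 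Let $M$ be the set of $f\in L^\infty(\mu)$ that agree $\mu$-a.e.\ on $\Delta$ with some bounded holomorphic function on $\Delta$. Using Montel's theorem on bounded nets together with the Krein--Smulian theorem, $M$ is a weak-$*$ closed subspace of $L^\infty(\mu)$ containing all polynomials; and $M\neq L^\infty(\mu)$, because $\mathbbm{1}_{B}$ for a small ball $\overline{B}\subset\Delta$ cannot agree $\mu$-a.e.\ on $\Delta$ with a holomorphic function (identity theorem, again using that $\mu$ charges every open subset of $\Delta$). Hence the polynomials are not weak-$*$ dense, and the weak-$*$ Hahn--Banach theorem yields a nonzero $h\in L^1(\mu)$ annihilating all $z^n$, which is exactly the non-reductivity criterion. (For what it is worth, the paper itself gives no proof of this proposition --- it is quoted from \cite{scroggs} --- so there is no in-paper argument to compare against; but your proposed route, as written, fails at the step above.)
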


\section{Complete systems with iterations }
\label {completeness}

This section is devoted to  the characterization of completeness of the system $\left\{A^ng\right\}_{g \in \G,\;n\ge 0}$ where $A$ is a reductive normal operator on a Hilbert space $\HH$ and $\G$ is a set of vectors in $\HH$.  This is done by ``diagonalizing'' the operator $A$ using multiplicity theory for normal operators, and the properties of reductive operators.

\begin{theorem} \label{mainthm} Let $A$ be a  normal operator on a Hilbert space $\HH$,  and let $\G$ be a countable set of vectors in $\HH$ such that  $\left\{A^ng\right\}_{ g \in \G,\;n\ge 0}$ is complete in $\HH$. Let  $\mu_\infty,\mu_1,\mu_2,\dots$ be the measures in  the  representation \eqref{repres}  of the operator $A$. Then for every  $1\leq j\leq\infty$ and $\mu_j$-a.e. $z$, the system of vectors  $\{\wi g_j(z)\}_{g\in \G}$ is complete in   $\ell^2\{\Om_j\}$.
%(where $\ell^2\{1,2,\dots,j\}\cong \mathbb{C}^j$ and $\ell^2\{1,2,\dots,\infty\}\cong \ell^2(\mathbb{Z}_+)).$

 If in addition to being normal,  $A$ is also reductive,  then $\left\{A^ng\right\}_{ g\in \G,\;n\ge 0}$		being complete in $\HH$ is equivalent to $\{\wi g_j(z)\}_{g\in \G}$ being complete in   $\ell^2\{\Om_j\}$  $\mu_j$-a.e. $z$  for every  $1\leq j\leq\infty$.	
\end{theorem}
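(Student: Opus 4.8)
The plan is to work through the unitary transformation $U$ given by the spectral theorem, translating the completeness of $\{A^ng\}_{g\in\G,\,n\ge0}$ in $\HH$ into a statement about the functions $\wi g_j$ living in the model space $\mathcal{W}$. Under $U$, the operator $A$ becomes the direct sum of multiplication operators $N_{\mu_j}^{(j)}$, so $\widetilde{A^ng} = (z^n\wi g_j(z))_{j\in\N^*}$. The key observation is that completeness of the iterated system is equivalent to saying that the only vector orthogonal to every $A^ng$ is zero, and I would rewrite this orthogonality condition componentwise: for $\wi f=(\wi f_j)_j\in\mathcal{W}$,
\begin{equation*}
\la A^ng,f\ra_\HH = \sum_{j\in\N^*}\int_\CC z^n\la \wi g_j(z),\wi f_j(z)\ra_{\ell^2(\Om_j)}\,d\mu_j(z).
\end{equation*}

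First I would prove the necessity (the forward direction holding for any normal $A$). Suppose the iterated system is complete but that, for some $j$, on a set $G\subseteq\mathcal{E}_j$ of positive $\mu_j$-measure the vectors $\{\wi g_j(z)\}_{g\in\G}$ fail to be complete in $\ell^2(\Om_j)$. Then on $G$ one can select, in a measurable way, a nonzero field $z\mapsto \wi f_j(z)$ orthogonal to every $\wi g_j(z)$; the measurability of such a selection is the technical point requiring care, but it follows from a measurable-selection argument applied to the orthogonal complements. Extending $\wi f$ by zero off $G$ and in all other components gives a nonzero element of $\mathcal{W}$ that is orthogonal to $z^n\wi g_j(z)$ for all $n$ and all $g$, hence to the whole iterated system, contradicting completeness. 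This direction uses only the spectral representation and not reductivity.

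For the converse, where reductivity is assumed, suppose $\{\wi g_j(z)\}_{g\in\G}$ is complete $\mu_j$-a.e.\ for each $j$, and let $\wi f\in\mathcal{W}$ be orthogonal to the whole iterated system. For each fixed $g$, the vanishing of $\la A^ng,f\ra_\HH$ for all $n\ge0$ says exactly that $\int_\CC z^n h_g(z)\,d\mu(z)=0$ for every $n\ge0$, where $h_g(z)=\sum_j \mathbbm{1}_{\mathcal{E}_j}(z)\la\wi g_j(z),\wi f_j(z)\ra_{\ell^2(\Om_j)}$. This is precisely the hypothesis of Proposition~\ref{prop2} (the reductivity criterion of Wermer), and I would invoke it to conclude that $\la\wi g_j(z),\wi f_j(z)\ra_{\ell^2(\Om_j)}=0$ $\mu_j$-a.e.\ for every $g$ and every $j$. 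Fixing $j$ and letting $g$ range over $\G$, the pointwise completeness of $\{\wi g_j(z)\}_{g\in\G}$ then forces $\wi f_j(z)=0$ $\mu_j$-a.e., and since this holds for all $j$ we get $\wi f=0$, so the iterated system is complete.

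The main obstacle I anticipate is the measurable-selection step in the necessity direction: producing a genuinely measurable nonzero orthogonal field on the bad set $G$, especially when $j=\infty$ so that $\ell^2(\Om_j)$ is infinite-dimensional and the orthogonal complements vary with $z$. One clean way around it is to avoid an explicit selection and instead argue by a countable exhaustion, using a fixed countable dense (or orthonormal) family in $\ell^2(\Om_j)$ together with Gram-matrix or determinant conditions to pin down, measurably, where the span fails to be dense; the set where completeness fails is then seen to be measurable and, if of positive measure, supports a measurable orthogonal field. The converse direction, by contrast, is essentially a direct application of Proposition~\ref{prop2} and should present no difficulty beyond carefully justifying the interchange of the sum over $j$ with the integral, which is controlled by the Bessel-type summability built into membership in $\mathcal{W}$.
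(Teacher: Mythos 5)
Your proposal is correct and takes essentially the same route as the paper: your converse direction is precisely the paper's argument via Proposition~\ref{prop2}, and your forward direction amounts to proving, by the measurable-selection/Gram-determinant exhaustion you sketch, exactly the statement the paper instead cites as Lemma~\ref{auxlem} (adapted from Kriete), after which both arguments use that $\spn\{z^n\wi g_j(z)\}_{g,n}=\spn\{\wi g_j(z)\}_{g}$ pointwise. No gap to report.
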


\begin{example}\label{examp1}
Let $A$ be a convolution operator on $\HH=L^2(\R)$ given by $Af=a\ast f$, where $a\in L^1(\R)$ is a real valued, even function (hence  the Fourier transform $\hat a$ of $a$ is real valued even function), such that $\hat a$ is strictly decreasing on $[0,\infty)$. For example, $A$ can be the discrete-time heat evolution operator given by the convolution with the Gaussian kernel  $a(x)=\frac 1 {\sqrt {4 \pi }}e^{-\frac {x^2} {4}}$. Since  $a\in L^1(\R)$,  $\hat a$ is continuous, and the spectrum of $A$ is the compact interval $I=[0,\frac 1 {\sqrt {4 \pi }}] \subset \R$. Hence as a subset of $\CC$, $I$ satisfies the assumption of Proposition \ref {emptintspec} and thus $A$ is reductive. Moreover, the facts that  $\hat a$ is real valued, even function, strictly decreasing on $[0,\infty)$, imply that $\mu_j=0$ for $j\ne 1,2$.   In fact, using \cite[Theorem 5]{krieteabr}, we get that $\mu_j=0$ for $j\ne 2$.
Then, using Theorem \ref{mainthm}, for a set of functions $\G\subset L^2(\R)$, the system of iterations $\left\{A^ng\right\}_{ g \in \G,\;n\ge 0}$ is complete in $L^2(\R)$ if and only if $\left\{\left(\hat g(\xi),\hat g(-\xi)\right)\right\}_{g\in\G}$ is complete in $\R^2$ for a.e. $ x\in \R $.
\end{example}
\begin {definition}  \label {ClassL}
For a given set $\G$, let  $\mathcal{L}$ be the class of functions $L:\G \rightarrow \N^*$ such that 
\begin{equation} \label{Lclass}
cl({\spn}\{A^ng\}_{g\in \G,\;0\leq n< L(g)})=cl({\spn}\{A^ng\}_{g\in \G,\;n\ge 0}).
\end{equation}
\end {definition}
\begin {remark}\rm \label{RemLclass}
Note that condition \eqref{Lclass} is equivalent to 
\begin{equation} \label{Lclass2}
A^{L(h)}h\in cl({\spn}\{A^ng\}_{g\in \G,\;0\leq n< L(g)}).
\end{equation}
for every $h\in \G$ such that $L(h)<\infty.$

In particular, $\mathcal{L}$ contains the constant function $L(g)=\infty$ for every $g \in \G$.   It also contains the function
\begin{equation} \label{lg}
l(g)=\min\left\{\left\{m \;|\; A^mg\in span\{g,Ag,\dots,A^{m-1}g\}\right\}, \infty\right\} \quad \text { for every } g \in \G.
\end{equation}
When $ l(g) $ is finite, it is called {\em the degree of the annihilator} of $g$.
 
 Because of condition \eqref{Lclass}, the reduced system $\{A^ng\}_{g\in \G,\;0\leq n< L(g)}$ will be complete in $\HH$ if and only if  $\{A^ng: g\in \G,n\geq 0\}$  is complete in $ \HH $.  Therefore, Theorem \ref{mainthm}  holds if we replace $\{A^ng\}_{g\in \G,n\geq 0}$  by  $\{A^ng\}_{g\in \G,\;0\leq n< L(g)}$ as long as $L\in \mathcal L$. 
 
Although when $L \in \mathcal L$,  $\{A^ng\}_{g\in \G,\;n\ge 0}$ and  $\{A^ng\}_{g\in \G,\;0\leq n< L(g)}$ are either both complete or both incomplete,  the system $\{A^ng\}_{g\in \G,\;0\leq n< L(g)}$ may form a frame while $\{A^ng\}_{g\in \G,\;n\ge 0}$ may not, since the possible extra vectors in  $\{A^ng\}_{g\in \G,\;n\ge 0}$ may inhibit the upper frame bound.  This difference in behavior between the two systems makes it important to study $\{A^ng\}_{g\in \G,\;0\leq n< L(g)}$ for $L\in \mathcal L$.
\end{remark}

\begin{example}
Let $\HH=\ell^2(\Z)$ and  $A$ be convolution operator with a kernel $a\in \ell^1(\Z)$, i.e. $ Af=a*f $. Let $\G=\{e_{mk}\}_{k\in \Z}$ for some $m>1$  where $ \{e_k\}_{k\in\Z} $ is the canonical basis of $\ell^2(\Z)$.
The  Fourier transform  of $a$ is defined as
$$\hat a(\xi)=\sum\limits_{k\in \mathbb{Z}} a(k) e^{-2\pi i\xi k},\; \xi \in [0,1].$$
Denote
$$\mathcal A_m(\xi)= \left(
\begin{matrix} 1 & 1 & \hdots &1\\
\hat{a}(\frac{\xi}{m}) & \hat{a}(\frac{\xi+1}{m}) & \hdots & \hat{a}(\frac{\xi+m-1}{m})\\
\vdots & \vdots& \vdots & \vdots \\
\hat{a}^{(L-1)}(\frac{\xi}{m}) & \hat{a}^{(L-1)}({\frac{\xi+1}{m}}) & \hdots & \hat{a}^{(L-1)}(\frac{\xi+m-1}{m})\end{matrix}\right).
$$  
Let $\sigma(\xi)$ denote the smallest singular value of the matrix $ \mathcal A_m(\xi) $.
Let $L(g)=M$ for each $g \in \G$. From \cite{ADK15}, the system $\{A^ng\}_{g \in \G, 0\leq n< M}$ is complete in   $\ell^2(\Z)$ if and only if $ \mathcal A_m(\xi) $ has a left inverse for a.e. $\xi \in [0,1]$, or equivalently $\sigma(\xi)>0$ for a.e. $\xi \in [0,1]$, and it forms a frame if and only if  $\sigma(\xi)\ge\alpha$ for a.e. $\xi \in [0,1]$ for some $\alpha > 0.$ Since $ \mathcal A_m(\xi) $ is a Vandermonde matrix, iterations $n> m-1$ will not affect the completeness of the system. Thus,  we let   $ M=m$.
In that case $\{A^ng\}_{g \in \G, 0\leq n\leq m-1}$ is complete in   $\ell^2(\Z)$ if and only if  $\det \cA_{m}(\xi)\neq 0$ for a.e $\xi \in [0,1]$, and it is a frame if and only if for a.e $\xi \in [0,1]$,   $ |\det \cA_{m}(\xi)| \ge \alpha \}$ for some $\alpha > 0.$
 
Although there are infinitely many convolution operators that satisfy this last condition, many natural operators in practice do not. For example, an operator where  $a$ is real, even and $\hat a$ is strictly decreasing on $[0,\frac12]$. For this case, it can be shown that the matrices $\mathcal A_m(0)$ and $\mathcal A_m(\frac 12)$ are singular,  while all the other matrices   $\mathcal A_m(\xi)$ are invertible. For this case any set of the form $\G=\{e_{mk}\}_{k\in \Z}\cup \{e_{mlk+1}\}_{k\in \Z}$ where $l\ge1$, produces a system $\{A^ng\}_{g \in \G, 0\leq n\leq m-1}$ which is a frame for $\HH=\ell^2(\Z)$. 
%To recover the function $f$ in a stable way, we need to make additional samples that are well placed. Let $T_c$ be the operator that shifts a vector in $\ell^2(\Z)$ to the right by $c$ units so that, for  $f\in l^2(\mathbb{Z})$,  $T_cf(k)=f(k-c)$. The following theorem was proved in \cite{ADK15}.
%\begin {theorem} \label{addsampZ}
%Suppose $\hat a$ is real, symmetric in $[0,1]$ around $\frac{1}{2}$, continuous, and strictly decreasing on $\left[0,\frac{1}{2}\right]$, $m$ and $J$ are odd, and $\Omega = \{1, \hdots, \frac{m-1}{2}\}$. Then the additional sampling given by $\left\{S_{mJ}T_c\right\}_{c\in\Omega}$ combined with samples \eqref{onedimZ} provide stable recovery for $f$.
%\end{theorem}

\end{example}

The proof of Theorem \ref {mainthm} below, also shows that, for normal reductive operators, completeness in $\HH$ is equivalent to  the system  $\left\{N^n_{\mu_j}\wi{g}_j\right\}_{g\in \G,\;n\ge 0}$ being complete in $(L^2(\mu_j))^{(j)}$ for every $1\leq j\leq\infty$, i.e. the completeness of $\left\{A^ng\right\}_{g \in \G,\;n\ge 0}$ is equivalent to the completeness of its projections  onto the mutually orthogonal subspaces $UP_jU^*\HH$  of $\HH$. This should be contrasted to the fact that, in general, completeness of a set of vectors $\{h_n\} \subset \HH$ is not equivalent to the completeness of its projections on subspaces whose orthogonal sum is $\HH$. We have

\begin {theorem} Let $A$ be a  normal reductive operator on a Hilbert space $\HH$,  and let $\G$ be a countable system of vectors in $\HH$. Then, 
$\left\{A^ng\right\}_{g \in \G,\;n\ge 0}$ is  complete in $\HH$ if and only if  the system $\left\{N^n_{\mu_j}\wi{g}_j\right\}_{g \in \G,\;n\ge 0}$ is complete in $(L^2(\mu_j))^{(j)}$ for every $1\leq j\leq\infty$.
\end{theorem}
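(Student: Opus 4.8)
The plan is to transport the whole problem to the model space $\mathcal{W}$ of Theorem~\ref{decomp} via the unitary $U$, where the two implications separate cleanly and only one of them actually uses reductivity. Since $U$ is unitary, $\{A^ng\}_{g\in\G,\,n\ge0}$ is complete in $\HH$ if and only if $\{N^n\wi g\}_{g\in\G,\,n\ge0}$ is complete in $\mathcal{W}$, where $N=UAU^{-1}=\bigoplus_j N^{(j)}_{\mu_j}$ and $\wi g=(\wi g_j)_j$. The structural point I would stress first is that $N$ acts diagonally with respect to the orthogonal decomposition $\mathcal{W}=\bigoplus_j\mathcal{W}_j$ with $\mathcal{W}_j=(L^2(\mu_j))^{(j)}$: the $j$-th component of $N^n\wi g$ is exactly $N^n_{\mu_j}\wi g_j$, with no mixing across different $j$.

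For the forward direction I would argue by a zero-padding lift, which requires no reductivity. Fix $j$ and suppose $\wi f_j\in\mathcal{W}_j$ is orthogonal to every $N^n_{\mu_j}\wi g_j$. Define $\wi f=P_j\wi f\in\mathcal{W}$, equal to $\wi f_j$ in the $j$-th slot and $0$ elsewhere. By the diagonal action, $\langle\wi f,N^n\wi g\rangle_{\mathcal{W}}=\langle\wi f_j,N^n_{\mu_j}\wi g_j\rangle=0$ for all $g,n$, so completeness of $\{N^n\wi g\}$ forces $\wi f=0$ and hence $\wi f_j=0$. This is just the general fact that orthogonal projections of a complete system onto a reducing summand remain complete.

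The reverse direction is the heart of the matter, and here reductivity is indispensable. Assume each $\{N^n_{\mu_j}\wi g_j\}_{g,n}$ is complete in $\mathcal{W}_j$, and take $f\in\HH$ with $\langle f,A^ng\rangle=0$ for all $g,n$. Writing the inner product out on $\mathcal{W}$ and pulling $z^n$ outside gives, for each fixed $g$,
\[
\langle A^ng,f\rangle=\int_{\CC} z^n\,h_g(z)\,d\mu(z),\qquad h_g(z):=\sum_{1\le j\le\infty}\mathbbm{1}_{\mathcal{E}_j}(z)\,\langle\wi g_j(z),\wi f_j(z)\rangle_{\ell^2(\Om_j)},
\]
with $h_g\in L^1(\mu)$ by Cauchy--Schwarz. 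Thus $\int z^n h_g\,d\mu=0$ for all $n\ge0$, and this is precisely the place where the general ``projection'' statement fails: a priori this controls only the sum over $j$, not the individual terms. Reductivity, invoked through Proposition~\ref{prop2} in the equivalent form noted just after it, is exactly what decouples them, yielding $h_g=0$ $\mu$-a.e., hence $\langle\wi g_j(z),\wi f_j(z)\rangle_{\ell^2(\Om_j)}=0$ for $\mu_j$-a.e.\ $z$, for every $j$ and every $g\in\G$.

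Finally I would fix $j$ and use the countability of $\G$ to intersect the countably many null sets, obtaining a single $\mu_j$-null set off which $\langle\wi g_j(z),\wi f_j(z)\rangle=0$ for all $g$ simultaneously; then $\langle N^n_{\mu_j}\wi g_j,\wi f_j\rangle=\int z^n\langle\wi g_j,\wi f_j\rangle\,d\mu_j=0$ for all $g,n$, so completeness of the $j$-th component system gives $\wi f_j=0$. Since this holds for every $j$, we get $\wi f=0$ and $f=0$. The main obstacle, to reiterate, is the decoupling step in the reverse direction: it is the sole reason reductivity is hypothesized, and everything else is essentially bookkeeping around the diagonal structure of $N$.
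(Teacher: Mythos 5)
Your proof is correct, and its skeleton matches the paper's: transport everything by $U$, use the zero-padding projection argument for the forward implication (which indeed needs no reductivity), and use mutual singularity of the $\mu_j$ together with Proposition~\ref{prop2} as the decoupling device in the converse. Where you genuinely depart from the paper is in the final step of the converse. The paper obtains this theorem as a byproduct of its proof of Theorem~\ref{mainthm}, which runs through the \emph{pointwise} fiber statement: component completeness is first converted into $\mu_j$-a.e.\ completeness of $\{\wi g_j(z)\}_{g\in\G}$ in $\ell^2(\Om_j)$ via Lemma~\ref{auxlem} (the adaptation of Kriete's lemma), and after the decoupling $\la \wi g_j(z),\wi f_j(z)\ra_{\ell^2(\Om_j)}=0$ $\mu_j$-a.e.\ one concludes $\wi f_j=0$ from that pointwise completeness. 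You instead stay at the level of the component Hilbert spaces: from the decoupled identity you integrate against $z^n\,d\mu_j$ to recover $\la N^n_{\mu_j}\wi g_j,\wi f_j\ra=0$ for all $g$ and $n$, and then invoke the hypothesized completeness of $\{N^n_{\mu_j}\wi g_j\}$ in $(L^2(\mu_j))^{(j)}$ directly. This bypasses Lemma~\ref{auxlem} entirely, so your argument is more self-contained and slightly more elementary for this particular equivalence; the paper's detour through the fiberwise statement is what simultaneously buys the stronger pointwise characterization asserted in Theorem~\ref{mainthm}. A minor remark: the countable intersection of null sets at the end is not actually needed, since for each fixed $g$ and $n$ the integrand already vanishes $\mu_j$-a.e., which is all the integration step requires.
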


\begin{proof} [Proof of Theorem \ref {mainthm}]   Since $\left\{A^ng\right\}_{g \in \G,\;n\ge 0}$ is complete in 
$\HH$,\[U\{A^ng: g\in \G,n\geq 0\}=\{(N^n_{\mu_j}\wi{g}_j)_{ j \in \N^*}: g \in \G , n\geq 0\}\] is complete in $\mathcal W=U\HH$. 
Hence, for every $1\leq j\leq\infty$, the system  $\mathcal S=\left\{N_{\mu_j}^n\wi{g}_j\right\}_{ g \in \G,\;n\ge 0}$ is complete in  $(L^2(\mu_j))^{(j)}$.
	
To finish the proof of the first statement of the theorem we use the following lemma which is an adaptation of  \cite [Lemma 1]{kriete}.
	
	\begin{lemma} \label{auxlem} Let $\mathcal S$ be  a complete countable set of vectors in $(L^2(\mu_j))^{(j)}$, then for $\mu_j$-almost every $z$ $\{h(z): h\in \mathcal S\}$  is complete in  $\ell^2(\Om_j)$.
	\end{lemma}
 Since  $S$	is complete in $(L^2(\mu_j))^{(j)}$, Lemma \ref {auxlem} implies that $\{z^n\wi g_j(z)\}_{ g \in \G,\;n\ge 0}$ is complete in  $\ell^2(\Om_j)$ for each $j \in \N^*$. 
But $span \{z^n\wi g_j(z)\}_{ g \in \G,\;n\ge 0}=span \{\wi g_j(z)\}_{ g \in \G\;}$. Thus,  we have proved the first part of the theorem.

	Now additionally assume that $A$ is also reductive. Let $$\wi f\in (L^2(\mu_\infty))^{(\infty)}\oplus L^2(\mu_1)\oplus (L^2(\mu_2))^{(2)}\oplus\cdots$$
	and 
	$$\la UA^ng,\wi f \ra =\sum_{1\leq j\leq\infty}\int_\CC z^n\la \wi g_j(z),\wi f_j(z)\ra _{\ell^2(\Om_j)}d\mu_j(z)=0$$
	for every $g \in \G$ and every $0\leq n <\infty$. Since the measures $\mu_j, \;1\leq j\leq\infty,$ are mutually singular, we get that
	\begin{eqnarray} \label{zeroness}
		\lefteqn{\sum_{1\leq j\leq\infty}\int_\CC z^n<\la \wi g_j(z),\wi f_j(z)\ra_{\ell^2(\Om_j)}d\mu_j(z)} \\
	&=\int_\CC z^n\left[\sum_{1\leq j\leq\infty} \mathbbm{1}_{\mathcal{E}_j}\la \wi g_j(z),\wi f_j(z)\ra _{\ell^2(\Om_j)}\right]d\mu(z) \notag
	\end{eqnarray}
	 for every $g \in \G$ and every $n\geq 0 $ with $\mu$ as in \eqref{themeas}.

Using the fact that the operator $A$ is reductive, from Proposition \ref{prop2}, we conclude that 
	$$ 
	\la \wi g_j(z),\wi f_j(z)\ra_{\ell^2(\Om_j)}=0 \;\; \mu_j\text{-a.e.}\; z .
	$$
	Since, by assumption $\{\wi g_j(z)\}_{g\in \G}$ is complete in  $\ell^2(\Om_j)$ for $\mu_j$-a.e $z$, we obtain 
	$$\wi f_j=0 \;\;\mu_j\text{-a.e.}\; z \quad \text{	for every $ j\in \N^*$.} $$
Thus $\wi f=0$ $\mu$-a.e.,  and therefore,  $\left\{A^ng\right\}_{g \in \G,\;n\ge 0}$		is complete in $\HH$.
\end{proof}

\section{Minimality property and basis}

The goal of this  section is to study the conditions on the operator $A$ and the set of vectors $\G$ such that the system $\left\{A^ng\right\}_{g\in \G,\;0\leq n< L(g)}$ is minimal or a basis  for $\HH$. We start with the following proposition:
\begin{proposition} \label{nonmini}
	If  $A$ is a  normal operator on $\HH$ then, for any set of vectors  $\G \subset \HH$,    the system of iterates $\left\{A^ng\right\}_{g\in\G, n\ge 0}$ is not a complete and minimal system in $ \HH $.
\end{proposition}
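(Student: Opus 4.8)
The plan is to argue by contradiction: assume that $\left\{A^ng\right\}_{g\in\G,\,n\ge0}$ is simultaneously complete \emph{and} minimal, and derive that some necessarily nonzero biorthogonal functional is forced to vanish. Minimality of a system in a Hilbert space is equivalent to the existence of a biorthogonal system $\{f_{g,n}\}_{g\in\G,\,n\ge0}\subset\HH$ (produced by Hahn--Banach together with the Riesz representation theorem), characterized by
\[
\la A^m g',\, f_{g,n}\ra=\delta_{g,g'}\,\delta_{m,n}.
\]
I would then fix a single $g\in\G$ and work only with the functionals $\{f_{g,n}\}_{n\ge0}$ attached to it; the aim is to show that completeness together with normality forces every $f_{g,n}$ to be $0$, contradicting $\la g,f_{g,0}\ra=1$.

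The first key step is to determine how $A^*$ acts on these functionals. Computing $\la A^m g',A^* f_{g,n}\ra=\la A^{m+1}g',f_{g,n}\ra=\delta_{g,g'}\delta_{m+1,n}$ and comparing with the defining relation shows that, for $n\ge1$, the vector $A^*f_{g,n}-f_{g,n-1}$ is orthogonal to every $A^m g'$ in the system, while $A^*f_{g,0}$ is orthogonal to all of them. Since the system is complete, orthogonality to all $A^m g'$ forces the vectors themselves to be zero, yielding the downward-shift identities
\[
A^*f_{g,n}=f_{g,n-1}\quad(n\ge1),\qquad A^*f_{g,0}=0.
\]
Iterating gives $(A^*)^{n+1}f_{g,n}=0$, i.e. $f_{g,n}\in\ker(A^*)^{n+1}$ for every $n\ge0$.

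The second key step uses normality decisively. For a normal operator $A$ one has $\ker(A^*)^{k}=\ker A^*$ for every $k\ge1$ (normal operators have ascent one); this is immediate from the spectral theorem, since $(A^*)^k x=0$ means $\int_\CC|z|^{2k}\,d\la E(\cdot)x,x\ra=0$, which forces $E(\{z\neq0\})x=0$, i.e. $x\in\ker A^*$. Hence $f_{g,n}\in\ker A^*$ for every $n$, so $A^*f_{g,n}=0$; combined with $A^*f_{g,n}=f_{g,n-1}$ this gives $f_{g,n-1}=0$ for all $n\ge1$, hence $f_{g,n}=0$ for all $n\ge0$. This contradicts $\la g,f_{g,0}\ra=1$, and the proposition follows.

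The conceptual heart — and the step I expect to be the genuine obstacle — is this last use of normality: the argument collapses entirely without the fact that $A^*$ has no nilpotent behaviour ($\ker(A^*)^{k}=\ker A^*$). This is precisely the feature that fails for a non-normal operator such as the unilateral shift $S$, where $\left\{S^n e_0\right\}_{n\ge0}$ is an orthonormal basis and hence complete and minimal; so any correct proof must locate the use of normality exactly here. The remaining ingredients — extracting the biorthogonal system from minimality, and passing from ``orthogonal to all $A^m g'$'' to ``equals $0$'' via completeness — are routine.
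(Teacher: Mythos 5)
Your proof is correct, and it takes a genuinely different route from the paper's. The paper argues through the spectral representation of Theorem \ref{decomp}: it proves the stronger statement that if $\left\{A^ng\right\}_{g\in\G,\,n\ge0}$ is complete, then for every $m$ the thinned system $\left\{A^ng\right\}_{g\in\G,\,n=0,m,m+1,\dots}$ is still complete, by approximating $\wi f/z^m$ for vectors $\wi f$ supported away from the origin and then treating a possible atom of $\mu$ at $z=0$ separately (this last step invokes Theorem \ref{mainthm}); non-minimality falls out as a corollary. You instead run a duality argument: minimality produces a biorthogonal family $\{f_{g,n}\}$, completeness upgrades the biorthogonality relations to the exact identities $A^*f_{g,n}=f_{g,n-1}$ and $A^*f_{g,0}=0$, hence $f_{g,n}\in\ker\left((A^*)^{n+1}\right)$, and normality collapses the whole ladder to zero via $\ker\left((A^*)^{k}\right)=\ker A^*$, contradicting $\la g,f_{g,0}\ra=1$. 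Every step is sound: the biorthogonal family exists by Hahn--Banach and Riesz representation, passing from ``orthogonal to a complete system'' to ``equals zero'' is valid, and the ascent-one property is correct for normal operators (it even has an elementary proof avoiding the spectral theorem: $\ker N=\ker N^*$ for normal $N$, so $N^2x=0$ gives $N^*Nx=0$ and then $\|Nx\|^2=\la N^*Nx,x\ra=0$). What each approach buys: yours is shorter and essentially self-contained --- it bypasses multiplicity theory and Theorem \ref{mainthm} entirely, and it isolates precisely where normality enters, consistent with the unilateral shift counterexample, where $\ker\left((S^*)^k\right)$ grows with $k$; the paper's argument is heavier but yields more, namely that completeness survives the deletion of the infinitely many vectors $\{A^ng:\ g\in\G,\ 1\le n<m\}$, a quantitative form of redundancy that the duality argument does not provide.
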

Note that Proposition \ref{nonmini} is trivial if the $\dim \HH<\infty$ and becomes interesting only when $\dim \HH=\infty$. As a corollary of Proposition~\ref{nonmini} we get
\begin {corollary} \label {NRB}
If  $A$ is a  normal operator on $\HH$ then, for any set of vectors  $\G \subset \HH$,    the system of iterates $\left\{A^ng\right\}_{g\in\G, n\ge 0}$ is not a basis for $ \HH $.
\end {corollary}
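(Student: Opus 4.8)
The plan is to deduce Corollary~\ref{NRB} directly from Proposition~\ref{nonmini}, via the elementary fact that every basis of $\HH$ is at once complete and minimal. Recall that a countable family $\{f_i\}_{i\in I}\subset\HH$ is minimal when no member $f_j$ lies in $cl(\spn\{f_i:i\ne j\})$.

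First I would establish the implication ``basis $\Rightarrow$ complete and minimal.'' Completeness holds by the very definition of a basis. For minimality, if $\{f_i\}$ is a Schauder basis then each $f\in\HH$ has a unique expansion $f=\sum_i c_if_i$, so the coordinate functionals $f_i^*$ are well defined, continuous, and biorthogonal, $f_i^*(f_j)=\delta_{ij}$; applying the continuous functional $f_j^*$ to any element of $cl(\spn\{f_i:i\ne j\})$ gives $0$, whereas $f_j^*(f_j)=1$, so $f_j\notin cl(\spn\{f_i:i\ne j\})$, i.e. the system is minimal. If instead one reads ``basis'' as a Riesz basis in the sense of \eqref{RB}, then the lower bound makes the synthesis map $c\mapsto\sum_i c_if_i$ bounded below, so $\{f_i\}$ is the image of an orthonormal basis under a bounded invertible operator; since orthonormal systems are minimal and minimality is preserved under such operators, $\{f_i\}$ is again minimal. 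Either way a basis is complete and minimal.

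With this in hand the corollary is immediate. By Proposition~\ref{nonmini}, for any normal $A$ and any $\G\subset\HH$ the system $\left\{A^ng\right\}_{g\in\G,\,n\ge0}$ fails to be simultaneously complete and minimal. Were it a basis, the previous step would force it to be both, a contradiction; hence it is not a basis for $\HH$.

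Since all of the substance is carried by Proposition~\ref{nonmini} --- the genuinely nontrivial assertion that completeness and minimality cannot coexist for the iterations of a normal operator in infinite dimensions --- I do not expect any real obstacle in the corollary itself. The only point requiring care is to fix the intended notion of ``basis'' and confirm that it entails minimality, which is a standard fact; everything difficult has already been done upstream.
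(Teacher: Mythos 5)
Your proof is correct and follows exactly the route the paper intends: Corollary~\ref{NRB} is stated as an immediate consequence of Proposition~\ref{nonmini}, with the only glue being the standard fact (which you verify for both Schauder and Riesz readings of ``basis'') that a basis is complete and minimal. No discrepancy with the paper's argument.
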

 If we remove the completeness condition in the statement of Proposition \ref {nonmini} above,  then the operator $Af=zf$ on the unit circle with arc length measure gives an orthogonal system  when iterated on the vector $g\equiv 1$, i.e., for this case $\left\{z^ng \right\}_{n\ge 0}$ is minimal since it is an orthonormal system. However, if in addition to being normal, we assume that $A$ is reductive then the statement of proposition \ref {nonmini} remains true  without the completeness condition since, by Proposition \ref{prop1},  the restriction of $A$ onto   $cl({\spn}\{{A^ng}_{g\in \G, n\geq 0})$ will be a normal operator and we will have a minimal complete system  contradicting the claim of Proposition \ref {nonmini}. Thus, we have the following corollary
 
 \begin{corollary}\label{reductmin}
	If  $A$ is a  reductive normal operator on $\HH$,  then, for any countable system of vectors $\G \subset \HH$,     the system of iterates $\left\{A^ng\right\}_{g\in\G, n\ge 0}$ is  not a minimal system.
 \end{corollary}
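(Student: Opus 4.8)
The plan is to argue by contradiction and to use reductivity to reduce the statement to Proposition~\ref{nonmini}, which already forbids complete minimal systems of iterates for \emph{any} normal operator. Suppose, then, that $\left\{A^ng\right\}_{g\in\G,\,n\ge0}$ were minimal, and set $\cV := cl(\spn\{A^ng\}_{g\in\G,\,n\ge0})$. Because $A\,\spn\{A^ng\}_{g\in\G,\,n\ge0}\subseteq \spn\{A^ng\}_{g\in\G,\,n\ge0}$ and $A$ is bounded, $\cV$ is a closed invariant subspace of $A$. Moreover every $g\in\G$ lies in $\cV$, since $g=A^0g$, so the iterates of the restriction $B:=A|_{\cV}$ on $\G$ coincide with the original family, $\left\{B^ng\right\}_{g\in\G,\,n\ge0}=\left\{A^ng\right\}_{g\in\G,\,n\ge0}$, and by the very definition of $\cV$ this family is complete in $\cV$.

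The crucial step, and the only place where the reductivity hypothesis is actually spent, is the following: since $A$ is normal and reductive and $\cV$ is an invariant subspace, Proposition~\ref{prop1} guarantees that $B=A|_{\cV}$ is again a \emph{normal} operator, now acting on the Hilbert space $\cV$. This is exactly what lets us drop the completeness assumption present in Proposition~\ref{nonmini}: without reductivity the restriction $A|_{\cV}$ need not be normal, and Proposition~\ref{nonmini} could not be invoked on $\cV$. It then remains only to transfer minimality from $\HH$ down to $\cV$. A family is minimal precisely when no member lies in the closed linear span of the remaining ones; since all members belong to the closed subspace $\cV$, that closed span is the same whether it is computed in $\HH$ or in $\cV$, so the family, assumed minimal in $\HH$, is minimal in $\cV$ as well.

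Assembling these observations, we would have a complete and minimal system of iterates of the normal operator $B$ on the Hilbert space $\cV$, which contradicts Proposition~\ref{nonmini} applied to $B$ on $\cV$. This contradiction shows that $\left\{A^ng\right\}_{g\in\G,\,n\ge0}$ cannot be minimal, as claimed. I expect the genuinely delicate point to be the verification that minimality in $\HH$ descends to minimality in $\cV$ (i.e.\ that passing to the invariant subspace neither creates nor destroys linear-span relations among the vectors); the remaining work is routine bookkeeping to confirm that the hypotheses of Propositions~\ref{prop1} and~\ref{nonmini} are met, together with the elementary fact that $\cV$ is $A$-invariant.
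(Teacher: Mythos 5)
Your proof is correct and takes essentially the same route as the paper, which establishes this corollary in the paragraph immediately preceding its statement: restrict $A$ to $cl(\spn\{A^ng\}_{g\in\G,\, n\ge 0})$, invoke Proposition~\ref{prop1} to conclude the restriction is still normal, and derive a contradiction with Proposition~\ref{nonmini}. The only difference is that you spell out the routine verifications (invariance of $\cV$, completeness of the family in $\cV$, and the fact that minimality passes to the subspace) that the paper leaves implicit.
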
 

As another corollary of  Proposition \ref{nonmini}, we get

\begin{corollary} \label{nonmn}
	Let $A$ be a  reductive normal operator on $\HH$,  $\G$ a countable system of vectors in $\HH$ and let $L\in \mathcal{L}$. If for some $h\in \G$, $L(h)=\infty$,  then the system $\left\{A^ng\right\}_{g\in \G,\;0\leq n< L(g)}$ is not a  basis for $\HH$.
\end{corollary}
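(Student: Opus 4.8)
The plan is to prove the stronger assertion that $\{A^ng\}_{g\in \G,\;0\leq n< L(g)}$ fails to be \emph{minimal}; since any basis is in particular a minimal system (this is exactly the implication already used to deduce Corollary \ref{NRB} from Proposition \ref{nonmini}), exhibiting a single member of the family that lies in the closed linear span of the others immediately rules out its being a basis for $\HH$. The whole argument will be localized to the cyclic subspace generated by the distinguished vector $h$ with $L(h)=\infty$, and the condition $L\in\mathcal L$ will not be needed beyond the fact that $L(h)=\infty$.

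First I would set $V:=cl(\spn\{A^nh : n\ge0\})$, a closed $A$-invariant subspace of $\HH$. Because $A$ is reductive, Proposition \ref{prop1} guarantees that the restriction $A|_V$ is a normal operator on the Hilbert space $V$; this is the one place where reductivity is essential, as the restriction of a merely normal operator to a (non-reducing) invariant subspace need not be normal. By the definition of $V$ the orbit $\{(A|_V)^nh\}_{n\ge0}=\{A^nh\}_{n\ge0}$ is complete in $V$. Applying Proposition \ref{nonmini} to the normal operator $A|_V$ on $V$ and to the singleton $\{h\}$, the system $\{A^nh\}_{n\ge0}$ cannot be simultaneously complete and minimal in $V$; being complete there, it must fail to be minimal.

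It then remains to transfer this failure of minimality from $V$ to the full indexed family. Non-minimality in $V$ yields an index $n_0\ge0$ with $A^{n_0}h\in cl(\spn\{A^nh : n\ge0,\ n\ne n_0\})$, where the closure may be taken in $V$ or in $\HH$ interchangeably since $V$ is closed in $\HH$. Here the hypothesis $L(h)=\infty$ is decisive: it ensures that every $A^nh$, $n\ge0$, is genuinely a member of $\{A^ng\}_{g\in \G,\;0\leq n< L(g)}$, so that $\{A^nh : n\ne n_0\}$ is contained in that family with the single element $A^{n_0}h$ removed. Consequently $A^{n_0}h$ lies in the closed span of the remaining members of the truncated system, which is precisely the statement that the system is not minimal, and therefore not a basis for $\HH$.

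The step requiring the most care is the reduction to $V$, and for two reasons. One must first invoke reductivity correctly to secure normality of $A|_V$ (so that Proposition \ref{nonmini} is applicable), and one must then ensure that non-minimality inside $V$ survives inside the larger family; this is exactly where $L(h)=\infty$ enters, since had $h$ carried a finite $L(h)$ the deleted tail of the orbit would fall outside the truncated system and the argument would break down. (If some vectors in the orbit coincide, the family is trivially non-minimal, so that case needs no separate treatment.)
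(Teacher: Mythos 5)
Your proof is correct and follows essentially the same route as the paper: restrict to the cyclic subspace $V=cl(\spn\{A^nh\}_{n\geq 0})$, invoke Proposition \ref{prop1} (reductivity) to see that $A|_V$ is normal, and then apply Proposition \ref{nonmini} to conclude that $\{A^nh\}_{n\geq 0}$ is complete but not minimal in $V$, hence the full system is not minimal and so not a basis. The only difference is that you spell out the transfer of non-minimality from the subsystem to the whole family (and the role of $L(h)=\infty$ there), which the paper leaves implicit.
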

\begin{proof}
Let $V=cl({\spn}\{A^nh\}_{n\geq 0})$ where $L(h)=\infty$. $V$ is a closed invariant subspace for $A$ hence, by Proposition \ref{prop1}, the restriction of $A$ on $V$ is also normal, therefore, from Proposition \ref{nonmini}, $\{A^nh\}_{n\geq 0} $ is not minimal.
\end{proof}

In particular, since  $\dim \HH=\infty$ (the assumption in this paper),  if $|\G|<\infty$, then there exists $g\in \G$ such that $L(g)=\infty$. Thus we have 

\begin{corollary} \label {NotRB}
Let $ A $ be a reductive normal operator. If $|\G|<\infty$, then for any $L\in \mathcal{L}$ the system $\left\{A^ng\right\}_{g\in \G,\;0\leq n< L(g)}$ is never  a  basis for $\HH$.
\end{corollary}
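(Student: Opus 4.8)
The plan is to reduce the statement directly to Corollary \ref{nonmn} via an elementary dimension count. The only structural feature of a basis I need is that it is complete, so I would argue by contradiction: assume that the system $\left\{A^ng\right\}_{g\in \G,\;0\leq n< L(g)}$ is a basis for $\HH$, and therefore in particular complete in $\HH$.

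Next I would combine the finiteness of $\G$ with the standing assumption $\dim \HH=\infty$. Suppose, for a moment, that $L(g)<\infty$ for every $g\in\G$. Then the system $\left\{A^ng\right\}_{g\in \G,\;0\leq n< L(g)}$ is a finite union over $g\in\G$ of the finite blocks $\{g,Ag,\dots,A^{L(g)-1}g\}$, hence a finite set of vectors, so its closed linear span is finite-dimensional. This contradicts completeness in the infinite-dimensional space $\HH$. Consequently there must exist at least one $h\in\G$ with $L(h)=\infty$; this is exactly the observation recorded in the paragraph preceding the statement.

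Having produced such an $h$, I would simply invoke Corollary \ref{nonmn}, all of whose hypotheses are now in force: $A$ is a reductive normal operator, the finite set $\G$ is in particular countable, $L\in\mathcal{L}$ by assumption, and we have just shown $L(h)=\infty$ for some $h\in\G$. That corollary asserts precisely that $\left\{A^ng\right\}_{g\in \G,\;0\leq n< L(g)}$ is not a basis for $\HH$, contradicting the assumption that it is. Hence no such $L\in\mathcal{L}$ can yield a basis, which is the claim.

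I do not anticipate a real obstacle, since the result is a direct specialization of Corollary \ref{nonmn}; the one point meriting explicit care is the justification that completeness forces $L(h)=\infty$ for some $h$ when $\G$ is finite, which the dimension count above settles cleanly. All the substantive content is inherited from the earlier machinery — Proposition \ref{nonmini} (a complete iterated system of a normal operator is never minimal) together with Proposition \ref{prop1} (the restriction of a reductive normal operator to an invariant subspace is again normal), which are precisely what drive Corollary \ref{nonmn}.
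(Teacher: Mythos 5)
Your proof is correct and follows essentially the same route as the paper: the paper also derives this corollary from Corollary \ref{nonmn} by noting that, since $\dim\HH=\infty$ and $|\G|<\infty$, some $g\in\G$ must have $L(g)=\infty$. If anything, you are slightly more careful than the paper's one-line remark, since you route that existence claim through completeness (a finite system cannot be complete in an infinite-dimensional space), which is the correct justification.
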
 
\begin{proof}[Proof of Proposition \ref{nonmini}] We prove that	if $\{A^ng\}_{g\in \G,n\geq 0}$ is complete in $\HH$, then for any $m\geq 0$, $ \{A^ng\}_{g\in \G,n=0,m,m+1,\dots} $ is also complete in $ \HH $, which implies non-minimality.

Assume $\{A^ng\}_{g\in \G,n\geq 0}$ is complete in $\HH$. Let $ \delta>0 $ and $f\in \HH$ be a vector such that  $\wi f(z)=0$ for any $z\in \overline {\mathbb D}_\delta$ where $ \overline {\mathbb D}_\delta$ is the closed unit disc of radius $\delta$ centered at $ 0 $. Then for a fixed $ m $, $\frac{\wi f}{z^m}$ is in $U\HH$ and hence can be approximated arbitrarily closely  by finite linear combinations of the vectors in $\{z^n\wi g\}_{g\in \G,n\geq 0}$.
Let $\wi f^{(1)} ,  \wi f^{(2)},\dots$ be a sequence in $U\HH$ such that $\wi f^{(s)} \rightarrow \frac{\wi f}{z^m}$ in $ U\HH $ and $\wi f^{(s)}$ is a finite linear combinations of the vectors in  $\{z^n\wi g\}_{g\in \G,n\geq 0}$
 for each $s$. Since $z^m$ is bounded on the spectrum of $A$,  it follows that $z^m \wi f^{(s)}\rightarrow \wi f$. Finally, we  note that $z^m\wi f^{(s)}$ is a finite linear  combination of the vectors  $ \{z^n\wi g\}_{g\in \G,n\geq m} $.
 	
 For a general $f \in \HH$, we have that 
\begin{equation}\label{delta-aprox}\lim_{\delta\rightarrow 0} \|\wi f-\wi f \mathbbm{1}_{\overline {\mathbb D}_{\delta}^c}\|^2_{L^2(\mu)}=\sum_{j\in\N^*} \|\wi f_j(0)\|_{\ell^2(\Omega_j)}^2\mu(\{0\})=  \mu_{J(0)}(\{0\})\|\wi f_{J(0)}(0)\|_{\ell^2(\Omega_{J(0)})}^2 \end{equation}
where $ J(0) $ is the value of the multiplicity function defined in \eqref{multfunc} at point $z=0$. 

From Theorem \ref{mainthm}, for any $\epsilon>0$ there exists   a finite linear combination $\wi h \in U\HH$ of vectors $\{\wi g\}_{g\in \G}$ such that  $\mu_{J(0)}(\{0\})\| \wi f_{J(0)}(0)-\wi h_{J(0)}(0)\|_{\ell^2(\Omega_{J(0)})}<\frac \epsilon 2$. Define  $\wi w:=\wi f-\wi h$. Using \eqref{delta-aprox} for $w $, we can pick $\delta$ so small that $\|\wi w-\wi w\mathbbm{1}_{\overline {\mathbb D}_{\delta}^c}\|^2_{L^2(\mu)}< \frac \epsilon 2$.  Let $\wi u$ be a finite linear combination of $ \{z^n\wi g\}_{g\in \G,n\geq m} $ such that $\|\wi w\mathbbm{1}_{\overline {\mathbb D}_{\delta}^c}- \wi u\|^2_{L^2(\mu)}< \frac \epsilon 2$. Then  $\|\wi w-\wi u\|^2_{L^2(\mu)}<  \epsilon $, i.e.,  $\|\wi f -\wi h-\wi u\|^2_{L^2(\mu)}<  \epsilon $.  Hence in this case we get  that any vector $f \in \HH$ is in the closure of the span of $\{\wi g\}_{g\in \G}\cup \{z^n\wi g\}_{g\in \G,n\geq m}= \{z^n \wi g\}_{g\in \G,n=0,m,m+1,\dots} $.	 
\end{proof}	
If we remove the normality condition in Corollary \ref {NotRB}, then for the unilateral shift operator $S$ on $\ell^2(\mathbb{N})$ we have $S^ne_1=e_n$, where $e_n$ is the $n$-th canonical basis vector, i.e.,  in this case the iterated system is not only a Riesz basis, but an orthonormal basis.

Even though we cannot have  bases for $\HH$ by iterations of a countable system $\G$ by a normal  operator, when the system  $\left\{A^ng\right\}_{ g\in \G,\;0\leq n< L(g)}$ is complete, the non-minimality suggest that we may still have a situation in which the system is a frame leading us to the next section.

\section{Complete Bessel systems and frames of iterations} \label {BS}

 It is shown in \cite {ACMT14} that it is possible to construct frames from iteration $\{A^n g\}_{n\ge0}$ of a single vector $g$  for some special cases when the  operator $A$ is an infinite matrix acting on $\ell^2(\N)$, has  point spectrum and $g$ is chosen appropriately \cite{ACMT14}. However, it is also shown that generically, $\{A^n g\}_{n\ge0}$ does not produce a frame  for $\ell^2(\N)$. Since a frame  must be a Bessel system, we study the Bessel properties of $\{A^n g\}_{g \in \G, \;n\geq 0}$ when $A$ is normal. In addition,  we find conditions that must be satisfied when the system $\{A^n g\}_{g \in \G, \;n\geq 0}$ has the lower frame bound property for the case where $\G$ is finite.
 
Denote by $\D_r$ the open disk in $\CC$ of radius $ r $ centered at the origin, by $\overline{\D}_r$ its clousure, and by  $S_r$ its boundary, that is $S_r = \overline{\D}_r \setminus \D_r$. For a set $E\subset \CC$ we will  use the  notation $\CC\setminus E$ or $E^c$ for the complement of $E$. Then we have the following theorem.

\begin{theorem} \label{framecond} Let $A\in \mathcal B(\HH)$ be a normal operator,  $\mu$ be  its scalar spectral measure, and $ \G $ a countable system of vectors in $ \HH $.
 	\begin{enumerate}[(a)]
		\item If  $\{A^n g\}_{g\in \G, \;n\geq 0}$ is complete in $\HH$  and for every $g\in \G$ the system  $\{A^n g\}_{n\geq 0}$ is Bessel in $\HH$, then	$\mu\left(\CC\setminus \overline{\D}_1\right)=0$ and $\mu|_{S_1}$ is absolutely continuous with respect to  arc length measure (Lebesgue measure) on $S_1$.
		\item If $\{A^n g\}_{g\in \G, \;n\geq 0}$ is frame in $\HH$, then	$\mu\left(\CC\setminus \D_1\right)=0$.
%		\item \label{condb}	If  $|\G|<\infty$ and  $\{A^n g\}_{g\in \G, \;n\geq 0}$ satisfies the lower frame bound then, for every $0<\epsilon<1$, $ \textcolor{red}{\mu\left(\overline{\D}_{1-\epsilon}^c\right)>0}$.
	\end{enumerate}
\end{theorem}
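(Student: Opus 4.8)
The plan is to reduce both parts to statements about the scalar spectral measures of the individual generators, and then transfer these to the global measure $\mu$ by invoking completeness through Theorem \ref{mainthm}. For $g\in\G$ let $E$ be the spectral measure of $A$ and set $\nu_g(G)=\langle E(G)g,g\rangle=\|E(G)g\|^2$, a finite positive measure with $\nu_g\ll\mu$; from the formula relating $E$ to $\mu$ one reads off $d\nu_g=\|\wi g_{J(z)}(z)\|^2_{\ell^2(\Om_{J(z)})}\,d\mu$. Two elementary identities drive everything. By the functional calculus for the normal operator $A$, $\|A^ng\|^2=\int_\CC|z|^{2n}\,d\nu_g$. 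Moreover, choosing test vectors $f$ with $\wi f_j=\overline\psi\,\wi g_j$ for a scalar $\psi\in L^2(\nu_g)$ turns the single-generator Bessel inequality into $\sum_{n\ge0}\big|\int_\CC z^n\psi\,d\nu_g\big|^2\le\beta_g\|\psi\|^2_{L^2(\nu_g)}$; thus Besselness of $\{A^ng\}_{n\ge0}$ forces $\{z^n\}_{n\ge0}$ to be a Bessel sequence in $L^2(\nu_g)$.

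For part (a) I would first get the support statement from the norm identity. Putting $f=A^Ng$ in the Bessel inequality isolates the diagonal term and gives $\|A^Ng\|^2\le\beta_g$ uniformly in $N$, so $\int_\CC|z|^{2N}\,d\nu_g$ stays bounded, which is impossible unless $\nu_g$ is concentrated on $\overline{\D}_1$. To upgrade this on the circle, I restrict the Bessel sequence $\{z^n\}$ in $L^2(\nu_g)$ to test functions supported on $S_1$: writing $\lambda_g=\nu_g|_{S_1}$ and testing against $\psi=e^{-ik\theta}$ yields $\sum_{m\le k}|\hat\lambda_g(m)|^2\le\beta_g\,\lambda_g(S_1)$, and letting $k\to\infty$ shows $(\hat\lambda_g(m))_m\in\ell^2(\Z)$. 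A finite positive measure on the circle whose Fourier coefficients are square-summable is absolutely continuous with respect to arc length, so $\nu_g|_{S_1}\ll d\theta$ for every $g$. Finally I pass to $\mu$: if $\mu(\CC\setminus\overline{\D}_1)>0$, then $\mu_j$ charges that region for some $j$, while $d\nu_g=\|\wi g_j\|^2\,d\mu$ being supported in $\overline{\D}_1$ forces $\wi g_j(z)=0$ there for every $g$ in the countable set $\G$; this contradicts the completeness of $\{\wi g_j(z)\}_{g\in\G}$ in $\ell^2(\Om_j)$ guaranteed $\mu_j$-a.e. by Theorem \ref{mainthm}. The same argument applied to a hypothetical arc-length-null set $B\subset S_1$ with $\mu(B)>0$ gives $\mu|_{S_1}\ll d\theta$.

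For part (b), since a frame is complete and Bessel, part (a) already places $\mu$ on $\overline{\D}_1$ with an absolutely continuous part on $S_1$, so it remains to kill the circle. Let $P=E(S_1)$ and $W=A|_{P\HH}$ on $\HH_{S_1}:=P\HH$; as $P$ commutes with $A$, $W$ is normal with spectrum in $S_1$, hence unitary, and $PA^ng=W^nPg$. Because the orthogonal projection of a frame onto a reducing subspace is again a frame, $\{W^nPg\}_{g\in\G,\,n\ge0}$ is a frame for $\HH_{S_1}$ with the same bounds $\alpha,\beta$. Let $S$ be its frame operator and $\rho(f):=\langle Sf,f\rangle$. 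Using $\langle Wf,W^nh\rangle=\langle f,W^{n-1}h\rangle$ one checks $W^*SW=S+R$ with $R\ge0$, and directly $\rho(W^nf)=\sum_{h}\sum_{m\ge-n}|\langle f,W^mh\rangle|^2$, a nondecreasing bounded function of $n\in\Z$. As $n\to+\infty$ it increases to the finite bilateral sum (which is $\le\beta\|f\|^2$, where the upper bound enters), and as $n\to-\infty$ it is a tail of that convergent series and hence tends to $0$. But the lower frame bound gives $\rho(W^{-N}f)\ge\alpha\|W^{-N}f\|^2=\alpha\|f\|^2$ for every $N$. Letting $N\to\infty$ forces $\alpha\|f\|^2\le0$, so $f=0$; thus $\HH_{S_1}=\{0\}$, i.e. $\mu(S_1)=0$, and together with (a) this yields $\mu(\CC\setminus\D_1)=0$.

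The main obstacle I anticipate is exactly the passage in part (b): showing that purely forward iterations of a unitary cannot frame a nonzero space. Everything else is bookkeeping through the spectral multiplicity representation, but here one must exploit that iterating only in the direction $n\ge0$ leaves an unbounded backward tail $\{W^{-N}f\}$ that the frame sum never sees, and the tension between this vanishing tail and the rigid lower bound $\rho(W^{-N}f)\ge\alpha\|f\|^2$ is the crux. Care is needed to confirm that the bilateral sum is finite before asserting that its tail vanishes; this finiteness is precisely what the upper frame bound provides, which is why the argument genuinely requires a frame rather than mere completeness.
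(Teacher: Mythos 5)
Your proposal is correct; part (b) is essentially the paper's own argument, while part (a) follows a genuinely different and somewhat more elementary route. For the disk, the paper first invokes completeness (Theorem \ref{mainthm}) to find a generator $g$ with $\mu_k(\overline{\D}_{1+\epsilon}^c\cap\supp(\wi g_k))>0$, and then applies the uniform boundedness principle to the functionals $f\mapsto\la f,A^ng\ra$, whose norms grow geometrically, to manufacture a vector violating the Bessel bound. You avoid both steps: plugging $f=A^Ng$ into the single-generator Bessel inequality, the diagonal term alone gives $\|A^Ng\|^4\le\beta_g\|A^Ng\|^2$, hence $\int_\CC|z|^{2N}d\nu_g=\|A^Ng\|^2\le\beta_g$ uniformly in $N$, forcing $\nu_g(\CC\setminus\overline{\D}_1)=0$. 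For the circle, the paper takes the Lebesgue decomposition $\mu_k|_{S_1}=\mu_k^{\rm ac}+\mu_k^{s}$, builds test vectors adapted to the singular part, and shows each complex measure $Q_r\wi g_k\,d\mu_k^{s}$ has square-summable Fourier--Stieltjes coefficients and is therefore zero; you apply the same uniqueness principle directly to the positive measures $\nu_g|_{S_1}$ by testing against exponentials, so no decomposition is needed. In both treatments completeness enters through the a.e.\ fiber completeness of Theorem \ref{mainthm}, but in opposite orders: the paper localizes a culpable generator first and then contradicts Besselness, while you extract generator-level facts from Besselness first and transfer to $\mu$ only at the end (a transfer that is valid precisely because $\G$ is countable, so the exceptional $\mu_j$-null sets can be united). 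Your route buys economy --- the scalar model $\nu_g=\la E(\cdot)g,g\ra$ converts the hypothesis into ``$\{z^n\}_{n\geq 0}$ is Bessel in $L^2(\nu_g)$'' and eliminates both the uniform boundedness principle and the Lebesgue decomposition --- whereas the paper's support-localized formulation is the version it reuses later (e.g.\ in Theorem \ref{countable support} and Theorem \ref{liTHM}(c)). In part (b) the two proofs are the same mechanism in different clothing: for $\wi f$ supported on $S_1$, your $W^{-N}f$ is exactly the paper's $(A^*)^Nf$, and both exploit the tension between the tail $\sum_{g}\sum_{n\geq N}|\la f,A^ng\ra|^2$ of a series made convergent by the upper frame bound, which must vanish as $N\to\infty$, and the lower frame bound pinning that tail above $\alpha\|f\|^2$; your intermediate step that a frame projects to a frame on a reducing subspace is correct but can be bypassed, as the paper does, by applying the frame inequality in $\HH$ directly to $(A^*)^Nf$.
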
 
The converse of Theorem \ref{framecond}  is true  in the following sense.
\begin{theorem}\label{invframe} Let $A\in \mathcal B(\HH)$ be a normal operator,  and $\mu$ be  its scalar spectral measure. 
		\begin{enumerate}[(a)]
\item If  $\mu\left(\CC\setminus \overline{\D}_1\right)=0$ and $\mu|_{S_1}$ is absolutely continuous with respect to  arc length measure on $S_1$, then there exists a countable set $ \G\subset \HH $ such that  $\{A^n g\}_{g\in \G, \;n\geq 0}$ is a complete Bessel system.
 \item If $\mu\left(\CC\setminus \D_1\right)=0$ then there exists a countable set $ \G\subset \HH $ such that  $\{A^n g\}_{g\in \G, \;n\geq 0}$ is a Parseval frame for $\HH$.
		\end{enumerate}
    	\end{theorem}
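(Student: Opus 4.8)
The plan is to prove part (b) first and then bootstrap it to part (a). Throughout I work in the spectral model of Theorem~\ref{decomp}, identifying $A$ with $\bigoplus_j N_{\mu_j}^{(j)}$ and $\HH$ with $\mathcal W$, so that $A^ng$ corresponds to $z^n\wi g(z)$; this is harmless since unitary equivalence preserves completeness, the Bessel property and the frame bounds.

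For (b) the key observation is a telescoping identity. For any countable $\G$ set $Q:=\sum_{g\in\G} g\otimes g$, where $g\otimes g$ denotes the rank-one operator $h\mapsto\la h,g\ra g$. A direct computation using $\sum_{g\in\G}\abs{\la h,g\ra}^2=\la Qh,h\ra$ gives, for every $f\in\HH$,
$$\sum_{g\in\G}\sum_{n\ge0}\abs{\la f,A^ng\ra}^2=\sum_{n\ge0}\la Q(A^*)^nf,(A^*)^nf\ra.$$
I would choose $\G$ so that $Q=I-AA^*$. Since $\mu(\CC\setminus\D_1)=0$ forces $\norm A\le1$ and $A$ is normal, $I-AA^*\ge0$; writing $C=(I-AA^*)^{1/2}$ and fixing an orthonormal basis $\{e_k\}$ of $\HH$, the set $\G=\{Ce_k\}_k$ satisfies $\sum_k Ce_k\otimes Ce_k=C\bigl(\sum_k e_k\otimes e_k\bigr)C=C^2=I-AA^*$. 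With this choice the summand telescopes, $\la(I-AA^*)(A^*)^nf,(A^*)^nf\ra=\norm{(A^*)^nf}^2-\norm{(A^*)^{n+1}f}^2$, so the double sum equals $\norm f^2-\lim_N\norm{(A^*)^{N+1}f}^2$. Because $A$ is normal and $\abs z<1$ holds $\mu$-a.e., dominated convergence gives $\norm{(A^*)^nf}^2=\norm{A^nf}^2=\int\abs z^{2n}\abs{\wi f}^2\,d\mu\to0$; hence the sum is exactly $\norm f^2$ and $\G$ yields a Parseval frame.

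For (a) I would split $\HH$ along the spectral measure $E$ of $A$: since $\mu(\CC\setminus\overline{\D}_1)=0$, we have $\HH=\HH_{\mathrm{int}}\oplus\HH_{\mathrm{cir}}$ with $\HH_{\mathrm{int}}=E(\D_1)\HH$ and $\HH_{\mathrm{cir}}=E(S_1)\HH$, both reducing for $A$. On $\HH_{\mathrm{int}}$ the restriction has all its spectrum in the open disk, so $A^n\to0$ strongly exactly as above, and the construction of part (b) applied to $A|_{\HH_{\mathrm{int}}}$ produces a set $\G_{\mathrm{int}}$ whose iterates form a Parseval frame for $\HH_{\mathrm{int}}$, in particular a complete Bessel system. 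On $\HH_{\mathrm{cir}}$ the restriction is \emph{unitary} with spectral measure absolutely continuous with respect to arc length. Here I pass, via the unitaries $f\mapsto f\sqrt{w_j}$ coming from the Radon--Nikodym densities $w_j$ of $\mu_j|_{S_1}$, to multiplication by $e^{i\theta}$ on a sum of spaces $L^2(S,d\theta)$ with $S\subseteq\TT$. In a single such component I take the vectors $h_{m,i}=c_{m,i}\,\mathbbm1_S\,e^{-im\theta}e_i$, $m\ge0$, $i\in\Om_j$, with all $c_{m,i}\ne0$ and $\sum_{m,i}\abs{c_{m,i}}^2<\infty$. Since on $S_1$ one has $e^{in\theta}h_{m,i}=c_{m,i}e^{i(n-m)\theta}e_i$, Parseval's theorem for Fourier coefficients gives $\sum_{m,i}\sum_{n\ge0}\abs{\la f,e^{in\theta}h_{m,i}\ra}^2\le\bigl(\sum_{m,i}\abs{c_{m,i}}^2\bigr)\norm f^2$, so the system is Bessel; and because the exponents $n-m$ run through \emph{all} integers as $m$ ranges over the unbounded set $\{c_{m,i}\ne0\}$, orthogonality of $f$ to the whole system forces every Fourier coefficient of each coordinate of $f$ to vanish, i.e.\ completeness. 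Collecting these over all components gives $\G_{\mathrm{cir}}$, complete and Bessel in $\HH_{\mathrm{cir}}$. Finally, for $\G=\G_{\mathrm{int}}\cup\G_{\mathrm{cir}}$: since the two subspaces are orthogonal and reducing, the iterates of $\G_{\mathrm{int}}$ stay in $\HH_{\mathrm{int}}$ and those of $\G_{\mathrm{cir}}$ in $\HH_{\mathrm{cir}}$, so completeness and the Bessel bounds add up, proving (a).

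The routine parts are the operator-theoretic telescoping and the orthogonal-sum bookkeeping. The genuine obstacle is the unitary component in (a): there one must obtain completeness and the Bessel bound simultaneously, and the naive complete family $\{e^{i(n-m)\theta}\}$ fails to be Bessel because each frequency is produced infinitely often. The weights $c_{m,i}$ with $\sum\abs{c_{m,i}}^2<\infty$ are exactly what tames the upper bound while leaving enough vectors (an unbounded set of shifts) to retain completeness; checking that this balance is compatible with the multiplicity structure and with the reduction to Lebesgue measure is where the care is needed.
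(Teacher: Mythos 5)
Your proposal is correct, and its overall architecture matches the paper's: part (b) is essentially the paper's own argument (the paper takes $D=(I-AA^*)^{1/2}$ — the exponent $-\tfrac12$ in its proof is a typo — and iterates $\{Dh\}$ for $h$ in an orthonormal basis of $cl(D\HH)$, getting the same telescoping identity and the same dominated-convergence limit $\norm{(A^*)^m f}\to 0$), and in part (a) the paper performs exactly your splitting $\HH=\HH_1\oplus\HH_2$ along $S_1$ and $\D_1$, handling the open-disk part by part (b). Where you genuinely diverge is the unitary (circle) component. The paper takes just \emph{two} generators per spectral component, the half-circle indicators $p_j(e^{2\pi it})=\mathbbm{1}_{[0,1/2]}(t)$ and $q_j(e^{2\pi it})=\mathbbm{1}_{[1/2,1]}(t)$: Besselness with bound $1$ comes from Parseval, but completeness of $\{z^np_j\}_{n\ge0}\cup\{z^nq_j\}_{n\ge0}$ is \emph{not} elementary — it is deduced from reductivity of multiplication by $z$ on an arc (Proposition \ref{emptintspec}, i.e.\ Wermer's theorem, applicable because a closed half-circle has empty interior and connected complement). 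Your construction replaces this with infinitely many weighted backward exponentials $c_{m,i}e^{-im\theta}e_i$, so that the iterates sweep every integer frequency and completeness follows from nothing more than uniqueness of Fourier coefficients, while the $\ell^2$ weights tame the upper bound; this is more elementary (no reductive-operator theory at all) and it also treats the multiplicity spaces $\ell^2(\Om_j)$ explicitly, a point the paper glosses over when it writes $U\HH_1=\oplus_j L^2(S_1,\mu_j)$ rather than $\oplus_j (L^2(\mu_j|_{S_1}))^{(j)}$. What the paper's route buys in exchange is economy of generators (two per component versus your countably many). One small precision you should add: when you collect $\G_{\mathrm{cir}}$ over the components $j$, the Bessel bounds of orthogonal reducing pieces combine via a \emph{supremum}, not a sum, so you must normalize the weights uniformly, e.g.\ $\sum_{m,i}\abs{c_{m,i}}^2=1$ for every $j$; with that choice (which is entirely at your disposal) the argument closes.
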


\begin {example} Let $A$ be the convolution operator as in Example \ref{examp1}. If there exists a complete Bessel system by iterations of $A$, then from Theorem \ref{framecond} (a), $\hat a(0)\leq 1$. Conversely, if $\hat a(0)\leq 1$, then the conditions in Theorem \ref{invframe} (b) are satisfied and hence there exists a set of vectors $\G\subset L^2(\R)$ such that $\{A^n g\}_{g\in \G, \;n\geq 0}$ is a Parseval frame in $ L^2(\R) $. From the proof of the theorem, to construct the set $\G$, we take an orthonormal basis $\mathcal{O} $ in $cl\left((1-|\hat a|^2)^\frac{1}{2}L^2(\R)\right)=L^2(\R)$, then $\G=(1-|\hat a|^2)\mathcal{O}$. Note that $\G$ is already complete in $L^2(\R)$. A natural question will be, what is the smallest $\G$ (in terms of its span closure) such that  $\{A^n g\}_{g\in \G, \;n\geq 0}$ is a frame? We will  see from Theorem \ref{cor53}, that $\G$ can not be finite for such a convolution operator since its spectrum is continuous. 
\end{example} 

Using the previous two theorems, we get the following necessary and sufficient conditions for the system  $\{A^n g\}_{g\in \G, \;n\geq 0}$ to be a complete Bessel system in $\HH$.

\begin {corollary}
Let $A\in \mathcal B(\HH)$ be a normal operator,  and $\mu$ be  its scalar spectral measure. Then the following are equivalent.
\begin {enumerate}
\item There exists a countable set $\G \subset \HH$ such that $\{A^n g\}_{g\in \G, \;n\geq 0}$ is a complete Bessel system.
\item $\mu\left(\CC\setminus \D_1\right)=0$  and $\mu|_{S_1}$ is absolutely continuous with respect to  arc length measure on $S_1$. 
\end{enumerate}
\end {corollary}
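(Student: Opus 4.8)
The plan is to obtain the corollary by simply gluing together the two preceding theorems: the implication $(2)\Rightarrow(1)$ is the constructive statement of Theorem~\ref{invframe}(a), and $(1)\Rightarrow(2)$ is the necessary condition supplied by Theorem~\ref{framecond}(a). All the analytic content has already been discharged in those theorems, so the work here is only to check that the hypotheses line up; the argument should be short and contain no new estimates.

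For $(2)\Rightarrow(1)$ I would invoke Theorem~\ref{invframe}(a) verbatim: assuming $\mu(\CC\setminus\overline{\D}_1)=0$ together with absolute continuity of $\mu|_{S_1}$ with respect to arc length on $S_1$, that theorem produces a countable set $\G\subset\HH$ for which $\{A^ng\}_{g\in\G,\;n\ge 0}$ is complete and Bessel, which is exactly (1). Nothing further is required in this direction.

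For $(1)\Rightarrow(2)$ I would begin with a countable $\G$ making $\{A^ng\}_{g\in\G,\;n\ge 0}$ a complete Bessel system and reduce to the hypotheses of Theorem~\ref{framecond}(a). Completeness of the full system is given outright. The only point to verify is that, for each fixed $g\in\G$, the single orbit $\{A^ng\}_{n\ge 0}$ is itself Bessel; this is immediate from the definition, since a Bessel bound $\beta$ for the aggregate family is automatically a bound for any subfamily, because discarding the nonnegative terms indexed by the other generators from $\sum_{g\in\G}\sum_{n\ge 0}|\la f,A^ng\ra|^2\le\beta\|f\|_\HH^2$ only decreases the left-hand side. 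With completeness and this per-generator Bessel property in hand, Theorem~\ref{framecond}(a) applies and yields $\mu(\CC\setminus\overline{\D}_1)=0$ and the absolute continuity of $\mu|_{S_1}$, i.e.\ condition (2).

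Since the substance lives entirely in Theorems~\ref{framecond} and \ref{invframe}, there is no genuine obstacle, only two bookkeeping points to be careful about. The first is the one-line observation that a subfamily of a Bessel system is Bessel with the same bound, which is what lets the aggregate hypothesis of (1) feed the per-orbit hypothesis of Theorem~\ref{framecond}(a). The second is the disk notation: both the conclusion of Theorem~\ref{framecond}(a) and the hypothesis of Theorem~\ref{invframe}(a) are stated with the closed disk $\overline{\D}_1$, so condition (2) must be read as $\mu(\CC\setminus\overline{\D}_1)=0$; were it read with the open disk $\D_1$, one would force $\mu(S_1)=0$, rendering the absolute-continuity clause vacuous and breaking the equivalence with (1).
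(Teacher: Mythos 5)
Correct, and this is precisely the paper's (implicit) proof: the corollary is presented as an immediate combination of Theorem~\ref{framecond}(a) and Theorem~\ref{invframe}(a), with your subfamily-of-a-Bessel-system observation supplying the per-orbit Bessel hypothesis that Theorem~\ref{framecond}(a) requires. Your second bookkeeping point is also right and worth recording as an erratum: condition (2) must be read with the closed disk $\overline{\D}_1$, since with the open disk $\D_1$ the statement is false --- e.g.\ multiplication by $z$ on $L^2(S_1)$ with arc length measure satisfies (1) by Theorem~\ref{invframe}(a) but has $\mu(\CC\setminus\D_1)=\mu(S_1)>0$.
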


For the case of iterates  $\{A^n g\}_{g\in \G, \;0\le n < L(g)}$,  where $L \in \mathcal L$  as defined in Remark \ref {ClassL}, one has the following theorem.

\begin {theorem} \label {countable support} Let $A$ be a normal operator on a Hilbert space $\HH$ and $ \G $ a system of vectors in $ \HH $, and assume $L \in \mathcal L$. If $\{A^n g\}_{g\in \G, \;0\le n < L(g)}$ is a complete Bessel system for $\HH$, then for each
$g \in \G$ with $L(g)=\infty$, the set $\{x\in \overline{\D}_1^c| \, \wi g(x)\ne 0 \}$ has $\mu$-measure $0$.

\end {theorem}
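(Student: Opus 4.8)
The plan is to reduce the statement to a single generator and to use only the Bessel property, discarding completeness and the hypothesis $L\in\mathcal L$ entirely (they are not needed for this particular conclusion). Fix $g\in\G$ with $L(g)=\infty$. Because $L(g)=\infty$, \emph{every} vector $A^ng$, $n\ge 0$, is one of the members of the system $\{A^ng\}_{g\in\G,\;0\le n<L(g)}$, which is Bessel with some bound $\beta$. I would then invoke the elementary fact that each element of a Bessel sequence has norm at most $\sqrt\beta$: testing the Bessel inequality on $f=A^ng$ and keeping only the diagonal term gives $\norm{A^ng}^4\le\beta\norm{A^ng}^2$, hence $\norm{A^ng}^2\le\beta$ for all $n\ge 0$. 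The content of the theorem is thus a uniform bound on the norms of the iterates of $g$.

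Next I would transport this uniform bound to the spectral side through the unitary $U$ of Theorem~\ref{decomp}. Since $UAU^{-1}$ is multiplication by $z$, we have $\wi{A^ng}=z^n\wi g$, so writing $\norm{\wi g(z)}^2:=\sum_{1\le j\le\infty}\mathbbm 1_{\mathcal E_j}(z)\norm{\wi g_j(z)}^2_{\ell^2(\Om_j)}$ for the pointwise norm yields
\begin{equation*}
\norm{A^ng}^2=\int_\CC |z|^{2n}\,\norm{\wi g(z)}^2\,d\mu(z)=\int_\CC |z|^{2n}\,d\nu(z),
\end{equation*}
where $d\nu:=\norm{\wi g(z)}^2\,d\mu(z)$ is a finite positive Borel measure of total mass $\norm{g}^2$. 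Combining with the previous paragraph gives the key estimate $\int_\CC |z|^{2n}\,d\nu(z)\le\beta$ for every $n\ge 0$.

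Finally I would run a blow-up argument on $\overline{\D}_1^c=\{\,|z|>1\,\}$. On that set $|z|^{2n}$ increases monotonically to $+\infty$ as $n\to\infty$, so the monotone convergence theorem forces $\int_{\overline{\D}_1^c}|z|^{2n}\,d\nu\to\infty$ whenever $\nu(\overline{\D}_1^c)>0$. This contradicts the uniform bound $\beta$, so $\nu(\overline{\D}_1^c)=0$; that is, $\norm{\wi g(z)}^2=0$ for $\mu$-a.e.\ $z\in\overline{\D}_1^c$, which is precisely the assertion that $\{x\in\overline{\D}_1^c:\wi g(x)\ne 0\}$ has $\mu$-measure $0$.

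The main obstacle is a trap rather than a genuine difficulty: it is tempting to work with the complex moments $\int_\CC z^n\,d\nu$ that arise directly from the inner products $\la A^ng,f\ra$. Those moments can cancel for non-radial measures — a uniform measure on a circle of radius $r>1$ has all positive-order moments equal to zero — so their boundedness (or even summability) would \emph{not} force $\nu$ to be supported in $\overline{\D}_1$. The device that circumvents this is to use the \emph{norms} $\norm{A^ng}^2$ instead of the inner products: the norm produces $|z|^{2n}$ in place of $z^n$, and a modulus admits no angular cancellation, which is exactly what makes the blow-up argument decisive.
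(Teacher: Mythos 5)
Your proof is correct, and it takes a genuinely different route from the paper's. The paper proves Theorem \ref{countable support} by reduction: it observes that the conclusion is a direct consequence of the proof of part (a) of Theorem \ref{framecond}, i.e., of the uniform boundedness argument given there. In that argument, for a generator $g$ with $\mu_k\left(\overline{\D}_{1+\epsilon}^c\cap\supp(\wi g_k)\right)>0$, one considers the functionals $\lambda_n(f)=\la f,A^ng\ra$ on the subspace $\HH_0$ of vectors $f$ with $\wi f=P_k\wi f$ and $\wi f=0$ on $\overline{\D}_{1+\epsilon}$, computes $\|\lambda_n\|_{op}^2\ge(1+\epsilon)^{2n}\int_{\overline{\D}_{1+\epsilon}^c\cap\,\supp(\wi g_k)}\|\wi g_k(z)\|^2\,d\mu_k(z)\rightarrow\infty$, and invokes the uniform boundedness principle to produce a single $f$ with $|\la f,A^ng\ra|\rightarrow\infty$, violating the Bessel bound. (Completeness is used in Theorem \ref{framecond}(a) only to locate a generator whose spectral support meets $\overline{\D}_1^c$ in positive measure; in Theorem \ref{countable support} that generator is handed to you, which is exactly why the paper's one-line reduction is legitimate.) You bypass this functional-analytic machinery entirely: the diagonal-term trick gives $\norm{A^ng}^2\le\beta$ for all $n\ge 0$, the spectral theorem converts this to $\int_\CC|z|^{2n}\,d\nu\le\beta$ with $d\nu=\norm{\wi g(z)}^2\,d\mu$, and monotone convergence on $\{|z|>1\}$ forces $\nu\left(\overline{\D}_1^c\right)=0$. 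Both arguments ultimately exploit that $|z|^{2n}$ — not $z^n$, as your closing remark on the moment trap correctly emphasizes — blows up off the closed disk; the paper's version hides this inside the norm $\|\lambda_n\|_{op}$. What the paper's route buys is economy: the uniform boundedness computation is needed anyway for Theorem \ref{framecond}(a) and is recycled once more in Theorem \ref{groupbess}, so no new argument is required. What yours buys is that it is more elementary (no uniform boundedness principle, no auxiliary subspace $\HH_0$) and makes explicit that the hypotheses of completeness and $L\in\mathcal L$ play no role in this particular conclusion: uniform boundedness of the norms $\norm{A^ng}$, a condition strictly weaker than the Bessel property, already yields that $\wi g$ vanishes $\mu$-a.e.\ on $\overline{\D}_1^c$.
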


When the system $\{A^n g\}_{g \in \G, \;n\geq 0}$ has the lower frame bound property  and  $\G$ is finite, we have the following necessary condition.

\begin {theorem}
\label{condb}	Let $A\in \mathcal B(\HH)$ be a normal operator,  and $\mu$ be  its scalar spectral measure.  If  $|\G|<\infty$ and  $\{A^n g\}_{g\in \G, \;n\geq 0}$ satisfies the lower frame bound, then, for every $0<\epsilon<1$, $ \mu\left(\overline{\D}_{1-\epsilon}^c\right)>0$.
\end{theorem}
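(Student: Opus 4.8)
The plan is to argue by contradiction, turning the hypothesis into an operator-norm bound and then colliding a finite total ``energy'' against a positive lower frame bound in infinite dimensions.

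Suppose, for contradiction, that $\mu(\overline{\D}_{1-\epsilon}^c)=0$ for some fixed $0<\epsilon<1$, and write $r:=1-\epsilon<1$. The first step is to record what this says about $A$. Since the scalar spectral measure is concentrated on the closed set $\overline{\D}_r$ and its support coincides with $\sigma(A)$, we get $\sigma(A)\subseteq\overline{\D}_r$; as $A$ is normal, $\|A\|=\rho(A)\le r<1$, whence $\|A^ng\|\le r^n\|g\|$ for every $g\in\G$ and every $n\ge0$. (One may avoid invoking $\supp\mu=\sigma(A)$ altogether: the vector measure $\mu_g(G):=\langle E(G)g,g\rangle$ is absolutely continuous with respect to $\mu$ by the formula relating $E$ and $\mu$ in Section~\ref{sec2}, so $\mu_g(\overline{\D}_r^c)=0$ and $\|A^ng\|^2=\int_{\CC}|z|^{2n}\,d\mu_g(z)\le r^{2n}\|g\|^2$.)

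Next I would sum these squared norms over the whole (finite) system. Since $|\G|<\infty$,
\[
\sum_{g\in\G}\sum_{n\ge0}\|A^ng\|^2\le\Bigl(\sum_{g\in\G}\|g\|^2\Bigr)\sum_{n\ge0}r^{2n}=\frac{1}{1-r^2}\sum_{g\in\G}\|g\|^2<\infty.
\]
The decisive point is that this finite total is incompatible with a positive lower frame bound on an infinite dimensional space. Fix an orthonormal basis $\{e_k\}_{k\in\N}$ of $\HH$ (infinite by the standing assumption). Applying the lower frame bound to each $f=e_k$ gives $\sum_{g\in\G}\sum_{n\ge0}|\langle e_k,A^ng\rangle|^2\ge\alpha\|e_k\|^2=\alpha$ for the fixed lower bound $\alpha>0$. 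Summing over $k$ and interchanging the nonnegative double sum (Tonelli) yields
\[
\infty=\sum_{k\in\N}\alpha\le\sum_{k\in\N}\sum_{g,n}|\langle e_k,A^ng\rangle|^2=\sum_{g,n}\sum_{k\in\N}|\langle e_k,A^ng\rangle|^2=\sum_{g,n}\|A^ng\|^2<\infty,
\]
a contradiction. Hence $\mu(\overline{\D}_{1-\epsilon}^c)>0$ for every $0<\epsilon<1$.

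The only genuinely delicate step is the first one, converting $\mu(\overline{\D}_{1-\epsilon}^c)=0$ into the geometric decay $\|A^ng\|\le r^n\|g\|$; everything afterward is a clean double-counting argument, essentially the contrapositive of the observation that a frame operator bounded below by $\alpha I$ cannot be trace class in infinite dimensions, while here $\sum_{g,n}\|A^ng\|^2<\infty$ forces exactly that. I expect no essential obstacle, provided one is careful to use only the lower frame bound (no Bessel or upper bound is assumed) and to justify the interchange of summations solely through nonnegativity of the terms.
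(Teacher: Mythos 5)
Your proof is correct, and it takes a genuinely different route from the paper's. Both arguments start from the same key consequence of the hypothesis $\mu(\overline{\D}_{1-\epsilon}^c)=0$, namely the geometric decay $\|A^ng\|\le (1-\epsilon)^n\|g\|$ (the paper extracts this decay inside explicit spectral integrals with H\"older's inequality rather than stating it as an operator-norm bound, but it is the same mechanism). Where you diverge is in how the decay collides with the lower frame bound. The paper's proof is ``local'': for each $M$ it uses $|\G|<\infty$ and $\dim\HH=\infty$ to conclude that the finitely many vectors $\{A^ng\}_{g\in\G,\,0\le n\le M}$ cannot be complete, picks by Hahn--Banach a unit vector $h_M$ orthogonal to all of them, and then shows the frame sum at $h_M$ is a tail sum bounded by $\frac{(1-\epsilon)^{2(M+1)}}{1-(1-\epsilon)^2}\sum_{g}\|g\|^2\to 0$, contradicting the fixed lower bound $\alpha$. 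Your proof is ``global'': you show the total energy $\sum_{g,n}\|A^ng\|^2$ is finite, then sum the lower frame inequality over an orthonormal basis and use Tonelli plus Parseval to force that same quantity to be infinite --- in effect, the analysis operator is Hilbert--Schmidt (equivalently, the would-be frame operator is trace class), which is incompatible with a lower bound $\alpha I$ in infinite dimensions. Your version is arguably cleaner: it needs no Hahn--Banach step, no choice of annihilating vectors, and no manipulation of the spectral integrals beyond the single estimate $\|A^ng\|\le(1-\epsilon)^n\|g\|$ (which you justify two ways, the second of which, via $\mu_g\ll\mu$, is airtight within the paper's framework). The paper's version, on the other hand, localizes the failure of the lower bound at explicit witness vectors $h_M$, which is the same device it reuses in the proof of Theorem \ref{cor53}; that is the main structural advantage of its formulation.
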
		
	As a corollary of \ref{framecond}, we get that
\begin{theorem}\label{cor53}
		Let $A$ be a bounded normal operator in an infinite dimensional Hilbert space $\HH$.  If  the system of vectors  $\{A^n g\}_{g\in \G, \;n\geq 0}$  is a frame for some $ \G \subset \HH$ with $|\G| < \infty$, then  $A=\sum_j\lambda_jP_j$ where $ P_j $ are projections such that $\rank P_j\leq |\G|$ (i.e. the global multiplicity of $A$ is less than or equal to $ |\G| $). 	\end{theorem}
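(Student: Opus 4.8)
The plan is to combine the three necessary conditions already established — the multiplicity bound coming from completeness (Theorem \ref{mainthm}), the support constraint $\mu(\CC\setminus\D_1)=0$ from Theorem \ref{framecond}(b), and the boundary–accumulation statement of Theorem \ref{condb} — and to feed the last of these into a localization argument on spectral subspaces of $A$.

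First I would record the global multiplicity bound. Since a frame is complete, Theorem \ref{mainthm} gives that for each $1\le j\le\infty$ and $\mu_j$-a.e.\ $z$ the finite family $\{\wi g_j(z)\}_{g\in\G}$ (at most $|\G|$ vectors) is complete in $\ell^2(\Om_j)$; because $\dim\ell^2(\Om_j)=j$, this forces $\mu_j=0$ for every $j>|\G|$ and $\mu_\infty=0$. Hence the multiplicity function satisfies $J\le|\G|$ everywhere, so each spectral atom will carry multiplicity at most $|\G|$. By Theorem \ref{framecond}(b) we also have $E(\D_1)=I$, where $E$ denotes the spectral measure of $A$.

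The heart of the argument is a localization step. Fix $\delta\in(0,1)$, put $B=\overline\D_{1-\delta}$ and $\HH_B=E(B)\HH$. Since spectral projections commute with $A$ and are self-adjoint, for $f\in\HH_B$ one has $\la A^ng,f\ra=\la A^nE(B)g,f\ra$, so the restricted system $\{A^n(E(B)g)\}_{g\in\G,\,n\ge0}$ is a frame for $\HH_B$ with the same bounds and at most $|\G|$ generators, while $A|_{\HH_B}$ is normal with scalar spectral measure $\mu|_B$ supported in $B$. If $\HH_B$ were infinite dimensional, Theorem \ref{condb} applied to $A|_{\HH_B}$ would force $\mu|_B(\overline\D_{1-\epsilon}^c)>0$ for every $\epsilon\in(0,1)$; choosing $\epsilon<\delta$ contradicts the inclusion $\supp(\mu|_B)\subseteq\overline\D_{1-\delta}$. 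Therefore $E(\overline\D_{1-\delta})\HH$ is finite dimensional for every $\delta\in(0,1)$. I expect this localization — together with the verification that Theorem \ref{condb} is legitimately applicable to the restriction $A|_{\HH_B}$ — to be the main obstacle; everything else is assembly.

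Finally I would let $\delta\downarrow0$. As $\overline\D_{1-1/m}\uparrow\D_1$ we have $E(\overline\D_{1-1/m})\uparrow E(\D_1)=I$ strongly, so $\HH$ is the closure of the increasing union of the finite-dimensional reducing subspaces $E(\overline\D_{1-1/m})\HH$. On each of these $A$ is a normal operator on a finite-dimensional space, hence diagonalizable by an orthonormal basis of eigenvectors; since the pieces are nested consistently, their union is an orthonormal basis of eigenvectors of $A$ for all of $\HH$. Writing $\{\lambda_j\}$ for the distinct eigenvalues and $P_j=E(\{\lambda_j\})$ for the corresponding orthogonal eigenprojections, this yields $A=\sum_j\lambda_jP_j$ (convergence in the strong operator topology, with the $\lambda_j$ accumulating only on $S_1$). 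It remains to bound the ranks: each $\lambda_j$, being an atom of the scalar spectral measure, lies in exactly one of the mutually disjoint sets $\mathcal E_{m}$, and then $\rank P_j=\dim E(\{\lambda_j\})\HH=m=J(\lambda_j)\le|\G|$ by the first step, since $\mathbbm{1}_{\{\lambda_j\}}L^2(\mu_m)$ is one dimensional. This gives $\rank P_j\le|\G|$ for all $j$, completing the proof.
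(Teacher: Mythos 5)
Your proposal is correct and follows essentially the same route as the paper's own proof: the paper defines $V_\rho=\{f:\supp\wi f\subseteq\overline{\D}_\rho\}$ (your $E(\overline\D_{1-\delta})\HH$), observes that the frame restricts to a frame on this reducing subspace, invokes Theorem \ref{condb} in contrapositive form to conclude $V_\rho$ is finite dimensional for every $\rho<1$, and combines this with Theorem \ref{framecond}(b) to obtain $A=\sum_j\lambda_jP_j$. The only difference is one of exposition: you spell out the rank bound $\rank P_j\le|\G|$ via the multiplicity argument from Theorem \ref{mainthm}, a step the paper's proof leaves implicit.
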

		
Combining Theorem \ref {cor53} with the result in \cite {ACMT14}, where $A$ was assumed to be a diagonal operator on $\ell^2(\N)$, we get the following characterization for a general normal operator  $A\in \mathcal B(\HH)$, when $|\G|=1$.  
\begin{theorem} \label {OnePointFrame}
		Let $A$ be a bounded normal operator in an infinite dimensional Hilbert space $\HH$. Then  $\{A^n g\}_{n\geq 0}$ is a frame for $\HH$ if and only if
		\begin{enumerate}
		 \item  [i)] $A=\sum_j\lambda_jP_j$, where $ P_j $ are rank one orthogonal projections.
			\item[ii)] $|\lambda_k| < 1$  for all $k.$  
			\item[iii)]  $|\lambda_k| \to 1$.
			\item[iv)] $\{\lambda_k\}$ satisfies Carleson's condition 
			\begin{equation}
			\label{carleson-cond}
			\inf_{n} \prod_{k\neq n} \frac{|\lambda_n-\lambda_k|}{|1-\bar{\lambda}_n\lambda_k|}\geq \delta.
			\end{equation}
			for some $\delta>0$.
			\item[v)]  $0<C_1\le \frac {\|P_jg\|} {\sqrt{1-|\lambda_k|^2}}  \le C_2< \infty$, for some constants $C_1,  C_2$.
		\end{enumerate}
	\end{theorem}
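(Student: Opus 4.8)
The plan is to prove the two implications by reducing, via Theorem~\ref{cor53}, an arbitrary bounded normal operator to a diagonal operator on $\ell^2(\N)$ and then invoking the characterization established in \cite{ACMT14} for the diagonal case. For the forward implication, assume $\{A^ng\}_{n\ge 0}$ is a frame. Since $|\G|=1$, Theorem~\ref{cor53} applies and gives $A=\sum_k\lambda_kP_k$ with $\rank P_k\le 1$; discarding any trivial projections, every $P_k$ is a rank-one orthogonal projection, which is exactly~(i). Because $\dim\HH=\infty$ there are infinitely many such $k$, the $\lambda_k$ are the distinct simple eigenvalues of $A$, and each is an atom of the scalar spectral measure $\mu$. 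Theorem~\ref{framecond}(b) gives $\mu(\CC\setminus\D_1)=0$, so every atom lies in the open disk, i.e. $|\lambda_k|<1$, which is~(ii).

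Next I would make the reduction explicit. Choosing unit eigenvectors $e_k$ spanning the range of $P_k$, the operator $A$ is unitarily equivalent to $\mathrm{diag}(\lambda_k)$ on $\ell^2(\N)$, under which $g$ corresponds to the sequence $(c_k)$ with $c_k=\la g,e_k\ra$ and $\|P_kg\|=|c_k|$, while $A^ng$ corresponds to $(\lambda_k^n c_k)$. Writing a generic $f\in\HH$ as $(f_k)$, the frame inequalities become
\begin{equation*}
\alpha\sum_k|f_k|^2\;\le\;\sum_{n\ge 0}\Big|\sum_k \bar\lambda_k^{\,n}\,\bar c_k\,f_k\Big|^2\;\le\;\beta\sum_k|f_k|^2 .
\end{equation*}
This is precisely the diagonal situation treated in \cite{ACMT14}, so conditions (iii), (iv), (v) follow from that reference; and the sufficiency direction is obtained by running the equivalence backwards, since (i)--(ii) let me realize $A$ as $\mathrm{diag}(\lambda_k)$ with $g\leftrightarrow(c_k)$, after which (iii)--(v), through the \cite{ACMT14} characterization, force the displayed two-sided bound, i.e. the frame property.

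It is worth recording the analytic mechanism behind (iii)--(v), which is the content imported from \cite{ACMT14}. The middle sum above equals $\|F\|_{H^2}^2$ for the Szeg\H{o}-kernel combination $F=\sum_k (f_k\bar c_k)\,\kappa_{\lambda_k}$, where $\kappa_{w}(z)=(1-\bar w z)^{-1}$ and $\|\kappa_w\|_{H^2}=(1-|w|^2)^{-1/2}$, using the isometry between $\ell^2$ sequences and their generating functions in $H^2$. Factoring through the normalized kernels $\hat\kappa_{\lambda_k}=\kappa_{\lambda_k}/\|\kappa_{\lambda_k}\|$ gives $F=\sum_k b_k\hat\kappa_{\lambda_k}$ with $b_k=f_k\,\bar c_k\,(1-|\lambda_k|^2)^{-1/2}$, so the two-sided bound for all $f$ splits into two requirements: the diagonal map $f_k\mapsto b_k$ must be bounded above and below, which is exactly $0<C_1\le \|P_kg\|/\sqrt{1-|\lambda_k|^2}\le C_2<\infty$, namely~(v); and $\{\hat\kappa_{\lambda_k}\}$ must be a Riesz sequence in $H^2$, which by the Shapiro--Shields/Carleson interpolation theorem is equivalent to the product condition~\eqref{carleson-cond}, namely~(iv). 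A Carleson sequence is separated in the pseudohyperbolic metric and so has no accumulation point inside $\D$; being infinite, it must satisfy $|\lambda_k|\to 1$, which is~(iii).

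The main obstacle is precisely this analytic equivalence underlying (iii)--(v): identifying the iteration sums with a reproducing-kernel expansion in $H^2$ and then showing that the normalized Szeg\H{o} kernels form a Riesz sequence exactly when \eqref{carleson-cond} holds, while correctly disentangling the diagonal weight~(v) from the Riesz-sequence condition~(iv). This is the Carleson interpolation theorem and is the substance already secured in the diagonal setting of \cite{ACMT14}; the only genuinely new ingredient here is the reduction of a general bounded normal operator to that setting, which Theorem~\ref{cor53} provides by forcing the spectral projections to be rank one.
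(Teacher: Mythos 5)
Your proposal is correct and follows essentially the same route as the paper, which proves this theorem in one line by combining Theorem~\ref{cor53} (forcing $A=\sum_j\lambda_jP_j$ with rank-one projections when $|\G|=1$) with the diagonal-operator characterization of \cite{ACMT14}. Your additional sketch of the Szeg\H{o}-kernel/Carleson-interpolation mechanism is a faithful account of what \cite{ACMT14} supplies, but it is imported content rather than a deviation from the paper's argument.
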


\begin {example} 
   Let $\HH=\ell^2(\N)$, $A$ a semi-infinite diagonal matrix whose entries are given by $a_{jj}=\lambda_j=1-2^{-j}$ for $j \in \N$, and let $g\in \ell^2(\N)$ be given by $g(j)=\sqrt {1-\lambda_j^2}$. Then, the sequence  $\lambda_j=1-2^{-j}$ satisfies Carleson's condition (see e.g. \cite {Hay58}), and $g$ satisfies condition (v). Thus,  $\{A^n g\}_{n\geq 0}$ is a frame for $\ell^2(\N)$.
\end{example}
\begin {remark}
The following  problem is still open in full generality: Let  $A=\sum_j\lambda_jP_j$ with $|\lambda_j| < 1$  for all $j$, $|\lambda_j| \to 1$, and   $\sup_j\rank P_j<\infty$.   Does there exist a set $\G$ with $|\G|<\infty$ such that $\{A^n g\}_{g\in \G,n\geq 0}$ is a frame for $\HH$? 
\end{remark}
For the special case defined by \eqref {lg}, we get the following necessary condition on the measure $\mu$.

\begin{theorem} \label {liTHM}
Suppose  $A$ is a normal operator, and  $\{A^n g\}_{g \in \G, \;n=0,1,\dots,l(g)}$ (where $l(g)$ is given by \eqref {lg})  is a complete Bessel system for $\HH$. Then \\
(a) If $l(g)=\infty$ then $\{x\in \overline {\mathbb {D}}_1^c: \, \wi g(x)\ne 0 \}$ has $\mu$-measure $0$. \\ %$\text{\supp}(g_i)\subseteq (-1,1)$.\\
(b) The restriction of $\mu$ on $\overline {\mathbb {D}}_1^c$ is concentrated on at most a countable set, i.e., either  $\mu(\overline {\mathbb {D}}_1^c)=0$, or there exists a countable set $E\subset \overline {\mathbb {D}}_1^c$ such that $\mu|_{\overline {\mathbb {D}}_1^c}\left( E^c\cap \overline {\mathbb {D}}_1^c\right)=0$.\\ 
(c) $\mu|_{S_1}$ is a sum of a discrete and an absolutely continuous measure (with respect to arc length measure) on $S_1$.
	\end{theorem}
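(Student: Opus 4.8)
The plan is to reduce everything to Theorems~\ref{mainthm}, \ref{framecond} and \ref{countable support} after recording two facts about the annihilator function $l$ of \eqref{lg}. First, $l\in\mathcal L$: when $l(g)<\infty$ the span of $g,Ag,\dots,A^{l(g)-1}g$ is $A$-invariant and absorbs all higher iterates, so the reduced and full spans coincide. Second, and more importantly, for each $g$ with $l(g)<\infty$ the function $\wi g$ is $\mu$-essentially supported on a finite set. Indeed, writing $A^{l(g)}g=\sum_{k<l(g)}c_kA^kg$ and setting $p_g(z)=z^{l(g)}-\sum_k c_kz^k$, the functional calculus for the normal operator $A$ gives $0=\|p_g(A)g\|^2=\int_\CC|p_g(z)|^2\,d\mu_g(z)$, where $d\mu_g(z)=\|\wi g(z)\|^2_{\ell^2(\Om_{J(z)})}\,d\mu(z)$ is the scalar spectral measure of $g$ read off from the formula in Section~\ref{sec2}. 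Hence $\mu_g$, and thus $\wi g$, is concentrated on the finite zero set $\Lambda_g:=\{z:p_g(z)=0\}$, with $|\Lambda_g|\le l(g)$.

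Part (a) is then immediate: since $l\in\mathcal L$, removing the redundant vector $A^{l(g)}g$ for each finite $l(g)$ leaves a complete Bessel system of the type treated in Theorem~\ref{countable support}, whose conclusion is precisely (a). For part (b), put $E:=\bigcup_{g:\,l(g)<\infty}\Lambda_g$, a countable union of finite sets. For $\mu$-a.e.\ $z\in\overline{\D}_1^c\setminus E$ every $\wi g(z)$ vanishes---for $l(g)<\infty$ because $z\notin\Lambda_g$, and for $l(g)=\infty$ by part (a). But completeness of the full system $\{A^ng\}_{g\in\G,\,n\ge0}$ (which follows from $l\in\mathcal L$), together with the first part of Theorem~\ref{mainthm}, forces $\{\wi g_j(z)\}_{g\in\G}$ to span the nontrivial space $\ell^2(\Om_j)$ for $\mu_j$-a.e.\ $z$; a positive-measure piece of $\overline{\D}_1^c\setminus E$ on which all $\wi g(z)=0$ would contradict this. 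Hence $\mu(\overline{\D}_1^c\setminus E)=0$, which is (b).

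For part (c) I would write $\mu|_{S_1}=\mu|_{S_1\cap E}+\mu|_{S_1\setminus E}$; the first term lives on the countable set $S_1\cap E$ and is therefore discrete. For the second, let $Q=E(S_1\setminus E)$ be the (reducing) spectral projection and $A':=A|_{Q\HH}$, a unitary operator whose scalar spectral measure is mutually absolutely continuous with $\mu|_{S_1\setminus E}$. Since $Q$ commutes with $A$, projection preserves both completeness and the Bessel bound, so $\{(A')^nQg\}_{g\in\G,\,n\ge0}$ is complete in $Q\HH$ and each $\{(A')^nQg\}_{n\ge0}$ is Bessel; moreover $Qg=0$ whenever $l(g)<\infty$, since then $\wi g$ vanishes off $E$. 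Applying Theorem~\ref{framecond}(a) to $A'$ and the countable set $\{Qg:l(g)=\infty\}$ shows the scalar spectral measure of $A'$ is absolutely continuous with respect to arc length on $S_1$, whence so is $\mu|_{S_1\setminus E}$. This gives the desired discrete-plus-absolutely-continuous splitting.

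The delicate point is part (c): one must check carefully that passing to $Q\HH$ really transports both hypotheses of Theorem~\ref{framecond}(a)---here it is essential that $Q=E(S_1\setminus E)$ is a spectral, hence reducing, projection---and that absolute continuity survives the passage between the scalar spectral measure of $A'$ and $\mu|_{S_1\setminus E}$ through their mutual absolute continuity. The finite-support lemma for finite $l(g)$ is the other technical linchpin, and it is exactly what creates the discrete pieces on $\overline{\D}_1^c$ and $S_1$ that are absent from the purely Bessel setting of Theorem~\ref{framecond}.
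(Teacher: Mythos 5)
Your proof is correct and takes essentially the same route as the paper's: (a) is cited from Theorem~\ref{countable support}, (b) uses the annihilating polynomial of each $g$ with $l(g)<\infty$ to force finite $\mu$-support of $\wi g$ and then fiberwise completeness from the first part of Theorem~\ref{mainthm}, and (c) splits $\mu|_{S_1}$ into the countable (discrete) piece and a remainder handled by the Theorem~\ref{framecond}(a) machinery on the projected system. The only cosmetic difference is that in (c) you apply Theorem~\ref{framecond}(a) as a black box to the restricted unitary $A|_{Q\HH}$ on the spectral subspace (carefully transporting the completeness and Bessel hypotheses), whereas the paper re-runs that theorem's proof on the projected system; both amount to the same argument.
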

	
	In fact, if for every $g\in \G$, $l(g)<\infty$  then without the condition that the system is Bessel, but with the completeness condition alone, we get that the measure $ \mu $ is concentrated on a countable subset of $ \CC $, as stated in the following theorem. 
\begin{theorem}
Let $A$ be a normal operator and $ \G\subset\HH $ be a system of vectors such that, for every $g\in \G$, $l(g)<\infty$ and  $\{A^n g\}_{g\in \G, \;n=0,1,\dots,l(g)}$ is complete in $\HH$. Then there exists a countable set $E\subset \CC$ such that $\mu\left(E^c\right)=0$. Moreover, every $g$ is supported, with respect to the measure $\mu$, on a finite set of cardinality not exceeding $l(g)$.
\end{theorem}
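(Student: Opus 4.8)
The plan is to reduce everything to one observation: finiteness of $l(g)$ means that $g$ is annihilated by a monic polynomial of degree $l(g)$, and under the spectral representation of Theorem~\ref{decomp} this forces $\wi g$ to be supported, $\mu$-a.e., on the finite zero set of that polynomial. Both assertions of the theorem then fall out, the ``Moreover'' part directly and the first part after combining with completeness.

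First I would record the algebraic content of $l(g)=m<\infty$. By definition \eqref{lg} this gives a relation $A^m g=\sum_{k=0}^{m-1} c_k A^k g$, i.e. $p_g(A)g=0$ for the monic polynomial $p_g(z)=z^m-\sum_{k=0}^{m-1} c_k z^k$ of degree $m=l(g)$. Since $UAU^{-1}$ is multiplication by $z$, the operator $U p_g(A)U^{-1}$ is multiplication by $p_g(z)$, so $p_g(z)\wi g(z)=0$ for $\mu$-a.e.\ $z$. Because $p_g$ has at most $l(g)$ roots, writing $Z_g$ for its zero set we obtain $|Z_g|\le l(g)$ together with $\wi g(z)=0$ for $\mu$-a.e.\ $z\notin Z_g$. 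This is exactly the ``Moreover'' claim: each $g$ is supported, with respect to $\mu$, on the finite set $Z_g$ of cardinality at most $l(g)$.

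Next I would bring in completeness. Since $l\in\mathcal{L}$ (Remark~\ref{RemLclass}) and $\{A^n g\}_{g\in\G,\,0\le n\le l(g)}$ spans the same closed subspace as $\{A^n g\}_{g\in\G,\,n\ge 0}$ (once $A^{l(g)}g$ lies in $\spn\{g,\dots,A^{l(g)-1}g\}$ all higher powers do too), the hypothesis makes the full iterated system complete, so Theorem~\ref{mainthm} applies: for every $1\le j\le\infty$ there is a $\mu_j$-null set off which $\{\wi g_j(z)\}_{g\in\G}$ is complete in $\ell^2(\Om_j)$. Now put $E=\bigcup_{g\in\G} Z_g$, a countable union of finite sets, hence countable. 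Collecting the countably many $\mu$-null exceptional sets from the previous step (one per $g$) into a single $\mu$-null set $N$, for every $z\in E^c\setminus N$ we have $\wi g_j(z)=0$ for all $g$ and all $j$. If $\mu_j(E^c)>0$ for some $j\ge 1$, then on a set of positive $\mu_j$-measure the family $\{\wi g_j(z)\}_{g}=\{0\}$ would be complete in the nontrivial space $\ell^2(\Om_j)$, which is absurd. Hence $\mu_j(E^c)=0$ for every $j$, and therefore $\mu(E^c)=\sum_{j}\mu_j(E^c)=0$.

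The polynomial/spectral translation is routine. The only point requiring genuine care is the measure-theoretic bookkeeping in the last step: one must merge the per-$g$ null sets on which the support conclusion can fail with the per-$j$ null sets on which completeness can fail, so that the contradiction ``a zero family is complete in a nontrivial Hilbert space'' is actually forced on a set of positive $\mu_j$-measure. Beyond this I expect no essential obstacle, since both conclusions are governed entirely by the finiteness of each $Z_g$ and the countability of $\G$.
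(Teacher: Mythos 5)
Your proof is correct and is essentially the paper's argument: the paper establishes this theorem by the same device used in the proof of Theorem \ref{liTHM}(b) --- the annihilating monic polynomial $Q(A)g=0$ becomes $Q(z)\wi g(z)=0$ $\mu$-a.e.\ under the spectral representation, forcing $\wi g$ onto the finite root set, and then the first part of Theorem \ref{mainthm} (valid for any normal operator) rules out positive $\mu_j$-measure off the countable union $E$ of root sets, since the zero family cannot be complete in $\ell^2(\Om_j)$. The only difference is that here the hypothesis $l(g)<\infty$ for \emph{all} $g$ lets the argument run on all of $\CC$ rather than only on $\overline{\D}_1^c$, exactly as you have done, and your null-set bookkeeping is the right way to make the a.e.\ statements mesh.
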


\subsection {Proofs of Theorems in Section \ref {BS}}
\begin{proof} [Proof of Theorem \ref {framecond}]
$ (a) $ Suppose 	$\mu(\overline {\mathbb D}_1^c)>0$, then $\mu_k(\overline {\mathbb D}_1^c)>0$ for some $k$, $0\leq k\leq \infty$. Thus, there exists $\epsilon>0$ such that  $\mu_k(\overline {\mathbb D}_{1+\epsilon}^c)>0$. 
Since the system of vectors $\{A^n g\}_{g\in \G, \;n\ge 0}$ is complete in $\HH$, it follows  from Theorem \ref{mainthm} that there exists a $g\in \G$,  such that 
$\mu_k(\overline {\mathbb D}_{1+\epsilon}^c\cap \supp(\wi g_k))>0$.

Let $f\in \HH$ be any vector such that $\wi f=P_k \wi f $, and $\wi f(z)=0$ for $z\in \overline {\mathbb D}_{1+\epsilon}$. Then	
\begin{eqnarray*}
\lefteqn{|\la f,A^ng\ra |} \\
& = &\left|\sum_{0\leq j\leq \infty}\int_\CC z^n \la\wi g_j(z),\wi f_j(z)\ra_{\ell^2(\Om_j)} d\mu_j(z)\right| \\
& = &\left|\int_{\overline {\mathbb D}_{1+\epsilon}^c\cap \;\supp(\wi g_k)} z^n \la \wi g_k(z),\wi f_k(z)\ra_{\ell^2(\Om_k)} d\mu_k(z)\right| .
\end{eqnarray*}
 For each $n$, denote by $\lambda_n(f)$ the  linear functional on the space $\HH_0:=\{f\in \HH: \wi f= P_k\wi f, \; \wi f(z)=0  \text{ for } z\in \overline {\mathbb D}_{1+\epsilon}\}$, defined by  $\lambda_n(f)=\la f, A^ng\ra $. The  norm of this functional (on $\HH_0$) is
 \begin{align*} \|\lambda_n\|^2_{op} &= \int_{\overline {\mathbb D}_{1+\epsilon}^c\cap\; \supp(\wi g_k)} |z|^{2n} \|\wi g_k(z)\|^2_{\ell^2\{\Omega_k\}} d\mu_k(z)\\
 & \geq( 1+\epsilon)^{2n} \int_{\overline {\mathbb D}_{1+\epsilon}^c\cap \;\supp(\wi g_k)}  \|\wi g_k(z)\|^2_{\ell^2\{\Omega_k\}} d\mu_k(z).
 \end{align*}
  Since the right side of the last inequality tends to infinity as  $n\rightarrow\infty$, so does  $\|\lambda_n\|_{op}$. Thus, from the uniform boundedness principle there exists an $f\in \HH_0$ such that 
 $$\lim_{n\rightarrow\infty}\left|\int_{\overline {\mathbb D}_{1+\epsilon}^c\cap \;\supp(\wi g_k)} z^n \la \wi g_k(z),\wi f_k(z)\ra _{\ell^2(\Om_k)} d\mu_k(z)\right| =  \infty.$$
For such  $f$, $\lambda_n(f)=|\la f,A^ng\ra \|\rightarrow\infty$ as $n\rightarrow\infty$. Thus, we also have that
 $$ \sum_{n=0}^{\infty}|\la f,A^ng \ra|^2=\infty$$
 which is a contradiction to our  assumption that $\{A^n g\}_{n\ge 0}$ is a Bessel system in $\HH$. 
 
 To prove the second part of the statement, let $k \geq 1$ be fixed, and consider the Lebesgue decomposition of $\mu_k|_{S_1}$ given by $\mu_k|_{S_1}=\mu_k^{\rm  ac}+\mu_k^{s}$ where $ \mu_k^{\rm ac} $ is absolutely continuous with respect to arc length measure on $S_1$, $\mu_k^{s}$ is singular and $\mu_k^ {\rm ac}\perp \mu_k^{s}$. We want to show that $\mu_k^{s}\equiv 0$.  
 
    For a vector $a=(a_1,a_2,\dots,a_k)\in \ell^2(\Om_k)$ define $Q_ra:=a_r$.   Fix  $1\leq r\leq k$ and $m\geq 1$. 
   Let $ f\in \HH $ be the vector such that 
   \begin{enumerate}[i)]
\item $Q_r\wi f_k(e^{2\pi it})=e^{2\pi imt}$, $ \mu_k^s $-a.e.
\item $Q_r\wi f_k(e^{2\pi it})=0$, $ \mu_k^ {\rm ac} $-a.e.
\item $Q_{s}\wi f_j(z)=0$ if $r\neq s$ or $ k\neq j $ 
\item $\wi f(z)=0$ for $z\notin S_1$.
   \end{enumerate}
    Then for such an $ f $ and a fixed $g\in \G$,  from the assumption that  $\{A^n g\}_{n\ge 0}$ is a Bessel system in $\HH$, we have
    \begin{eqnarray*}
\lefteqn{\sum_{n\geq 0}\left|\int_{S_1} e^{2\pi int} Q_r \wi g_k(e^{2\pi it}) \overline{ e^{2\pi imt}} d\mu_k^{s}(e^{2\pi it})\right|^2 }\\
      &=&\sum_{n\geq 0}\left|\sum_{0\leq j\leq \infty}\int_\CC z^n \la \wi g_j(z),\wi f_j(z)\ra_{\ell^2(\Om_j)} d\mu_j(z)\right|^2 \\
     & = & \sum_{n=0}^{\infty}|\la A^ng,f\ra |^2\leq C \|f\|^2\leq C \mu(S_1).
     \end{eqnarray*}
            Thus  
    $$\sum_{n\geq 0}\left|\int_{S_1} e^{2\pi i(n-m)t} Q_r\wi g_k(e^{2\pi it}) d\mu_k^{s}(e^{2\pi it})\right|^2\leq C \mu(S_1).$$
   Since the last inequality holds for every $m\geq 1$,  we have
   $$\sum_{n\in \Z}\left|\int_{S_1} e^{2\pi int} Q_r\wi g_k(e^{2\pi it}) d\mu_k^{s}(e^{2\pi it})\right|^2\leq C \mu(S_1).$$
This means the Fourier-Stieltjes coefficients of the measure $Q_r\wi g_k(e^{2\pi it}) d\mu_k^{s}(e^{2\pi it})$ are in $\ell^2(\Z)$.  Hence, from the uniqueness theorem of the Fourier Stieltjes coefficients (\cite{kaznelson}, p. 36) and the fact that any element of $\ell^2(\Z)$ determines Fourier coefficients of an $L^2(S_1)$  function (with respect to arc length measure), $Q_r\wi g_k(e^{2\pi it}) d\mu_k^{s}(e^{2\pi it})$ is absolutely continuous with respect to the arc length measure. But the measure $\mu_k^{s}$ is concentrated on a measure zero set as a singular measure, hence
$Q_r \wi g_k(e^{2\pi it}) d\mu_k^{s}(e^{2\pi it})$ is  the zero measure. Since the system  $\{A^n g\}_{g\in \G, \;n\geq 0}$ is complete in $\HH$,  from Theorem \ref{mainthm} we obtain that $\mu_k^s=0$ and hence $\mu_k$ is absolutely continuous with respect to the  arc length measure on $S_1$. Thus $\mu$ is absolutely  continuous with respect to the  arc length measure on $S_1$.

$ (b) $ Suppose $\{A^n g\}_{g\in \G, \;n\geq 0}$ is a frame with frame bounds $\alpha$ and $ \beta $.    Let $ f\in \HH $ be any vector such that $\wi f=0$ on $\CC\setminus S_1$. For such an $f$ we have that $\|(A^*)^mf\|=\|A^mf\|=\|f\|$ for any $m \in \Z $. Thus, for any $m \in \Z $, we  have
	\begin {eqnarray}
	\label {UP}
\;\alpha\|f\|=\alpha\|(A^*)^mf\| &\leq&	\sum_{g\in \G}\sum_{n= 0}^{\infty}|\la(A^*)^mf,A^ng\ra|^2
=\sum_{g\in \G}\sum_{n= 0}^{\infty}|\la f,A^{n+m}g\ra|^2\\
	&=&\sum_{g\in \G}\sum_{n= m}^{\infty}|\la f,A^{n}g\ra|^2\leq \beta \|A^mf\|\leq \beta\|f\|. \nonumber
	\end {eqnarray}
    	Since \eqref {UP} holds for every $m$, the right inequality implies $\sum_{n= m}^{\infty}\sum_{g\in \G}|\la f,A^{n}g\ra|^2\to 0$ as $m\to \infty$.  Hence, using the left inequality we conclude that $\|f\|=0$.  Since $f$ is such that $\wi f=0$ on  $ \CC\setminus S_1 $, but otherwise is arbitrary, it follows that $\mu(S_1)=0$. But, from Part (a), we already know that $ \mu(\CC\setminus \overline{\mathbb D}_1) =0$, hence $\mu(\CC\setminus \mathbb D_1) =0$.
	\end{proof}
	
	\begin{proof}[Proof of Theorem \ref{invframe}] 
	%\textcolor{red}{(a)    Let $\HH=\oplus_{i\in I}\HH_i$ where $\HH_i$ are subspaces of $\HH$ indexed by $i\in I$ where $I$ is a countable indexing set. Let $\G_i\subset \HH_i$, be a complete Bessel system in $\HH_i$ for each  $i\in I$ with uniform frame bound $B$.   Then, it is not difficult to see that  $\cup_{i\in I}\G_i$  is a complete Bessel system in $\HH$ with the Bessel bound $B$. } 
		
Let $\HH_1=\{f\in\HH: \wi f(z)=0, z\notin S_1\}$ and $\HH_2=\{f\in\HH: \wi f(z)=0, z\notin \mathbb D_1\}$. Then $\HH=\HH_1\oplus \HH_2$. Let $\G_i\subset \HH_i$, be complete Bessel systems in $\HH_i$, $i=1,2$, then, it is not difficult to see that  $\G_1\cup\G_2$  is a complete Bessel system in $\HH$.     We will proceed by constructing complete Bessel systems for $\HH_1$ and $\HH_2$.
To construct a complete Bessel sequence  for $\HH_1$, we first consider the operator $N_{\mu_j|_{S_1}}$ on $L^2(\mu_j|_{S_1})$ for a fixed $j$, with $1\le j\le \infty$, where $\mu_j$ is as in the decomposition of Theorem \ref{decomp}. Since for $f \in \HH_1$, $\wi f(z)=0$ for  $z\notin S_1$, and since $\mu|_{S_1}$ (and hence also $\mu_j|_{S_1}$) is absolutely continuous with respect to  the arc lengh measure $\sigma$, we have that on the circle $S_1$, $d\mu_j|_{S_1}=w_jd\sigma$ for some $w_j \in L^1(\sigma)$. Hence on the support $E_j$ of $w_j$,  $\mu_j$ and $\sigma$ are mutually absolutely continuous, i.e., for $\nu_j$ defined by $d\nu_j=\mathbbm{1}_{E_j}d\sigma$, $\mu_j$  and $\nu_j$ are mutually absolutely continuous. 

Now consider the two functions $p_j$ and $q_j$ such that $p_j(z)=q_j(z)=0$ for  $z\notin S_1$, while on $S_1$, $p_j(e^{2\pi i t})=\mathbbm{1}_{\left[0,\frac{1}{2}\right]}(t)$ and $q_j(e^{2\pi i t})=\mathbbm{1}_{\left[\frac{1}{2},1\right]}(t)$. From the properties of the Fourier series on $L^2(S_1,\sigma)$, the sets  $\{z^np_j(z)\}_{n\geq 0}$ and $\{z^nq_j(z)\}_{n\geq 0}$ are Bessel systems in $L^2(S_1,\sigma)$ with bound $1$.  Thus, $\{z^np_j(z)\}_{n\geq 0}$ and $\{z^nq_j(z)\}_{n\geq 0}$ are also Bessel systems in $L^2(S_1,\nu_j)$  with bound $1$. Therefore, $\{z^np_j(z)\}_{n\geq 0}\cup \{z^nq_j(z)\}_{n\geq 0}$ is a Bessel system for $\oplus_{j=1}^\infty L^2(S_1,\nu_j)$.  By Proposition \ref {emptintspec} and Theorem \ref{decomp} the sytem $\{z^np_j(z)\}_{n\geq 0}\cup \{z^nq_j(z)\}_{n\geq 0}$ is also complete in $\oplus_{j=1}^\infty L^2(S_1,\nu_j)$. 
Thus, $\{z^np_j(z)\}_{n\geq 0}\cup \{z^nq_j(z)\}_{n\geq 0}$ is a complete Bessel system for $\oplus_{j=1}^\infty L^2(S_1,\nu_j)$. 

Since $\mu_j$ and $\nu_j$ are mutually absolutely continuous,  the multiplication operator $z$ on $\oplus_{j=1}^\infty L^2(S_1,\nu_j)$ is unitarily equivalent to the multiplication operator $z$ on $\oplus_{j=1}^\infty L^2(S_1,\mu_j)$ which we denote by $V$. Hence,  $\{z^nV(p_j)(z)\}_{n\geq 0}\cup \{z^nV(q_j)(z)\}_{n\geq 0}$ is a complete Bessel system for $\oplus_{j=1}^\infty L^2(S_1,\mu_j)$.  Finally, using  Theorem \ref{decomp}, it follows  that $\{A^nU^{-1}V(p_j(z))\}_{n\geq 0}\cup \{A^nU^{-1}V(q_j(z))\}_{n\geq 0}$ forms a complete Bessel system for $U\HH_1=\oplus_{j=1}^\infty L^2(S_1,\mu_j)$. 

The existence of complete Bessel system in $ \HH_2 $ (moreover, a Parseval frame) follows from Part $ (b) $ of Theorem \ref{invframe} which we prove next.

$ (b) $  Let $D$ be the operator $(I-AA^*)^{-\frac 1 2 }$. Let $\mathcal O$ be an orthonormal basis for $ cl(D\HH) $, and define $\G=\{g=Dh: h \in \mathcal O\}$. Then 
%$\mathcal{V}=cl \{\sqrt{1-|z|^2}\cdot \wi h(z):h\in\HH\}$. Let $\{h\}_{h\in \mathcal I}$ be an orthonormal basis for $ \mathcal{V} $. Then
		\begin {eqnarray*} 
	\sum_{n=0}^{m}\sum_{h\in \mathcal O}|\la f,A^nDh\ra |^2&=&\sum_{n=0}^{m}\sum_{h\in \mathcal O}|\la D(A^*)^nf,h\ra |^2
	=\sum_{n=0}^{m}\|D(A^*)^nf\|^2\\
	&=&\sum_{n=0}^{m}\la D^2(A^*)^nf,(A^*)^nf\ra 
	=\sum_{n=0}^{m}\la (I-AA^*)(A^*)^nf,(A^*)^nf\ra \\
	&=&\|f\|^2-\|(A^*)^{m+1}f\|.
	\end {eqnarray*}
	Using Lebesgue's Dominated Convergence Theorem,  $\|(A^*)^mf\|^2=\int_{\overline{\mathbb D}_1}|z|^{2m}\|\wi f(z)\|^2d\mu(z)\to 0$ as $m\to \infty$ since $|z|^{2m}\to 0,$ $\mu-a.e.$ on ${\overline{\mathbb D}_1}$.  Hence, from the  identity above we get that 	
	$$\sum_{n=0}^{\infty}\sum_{h\in \mathcal{I}}|\la f,A^nDh\ra |^2=\|f\|^2.$$
	Therefore  the system of vectors $\G=\{g=Dh: h \in \mathcal O\}$ is a tight frame for $\HH$.
\end{proof}
	
The proof of Theorem \ref{countable support} is a direct consequence of the proof of (a) in the above theorem.

\begin {proof} [Proof of Theorem \ref {condb}]
Suppose 	$\mu(\overline {\mathbb {D}}_{1-\epsilon}^c)=0$ for some $0<\epsilon<1$. Because $|\G|<\infty$ and $\dim(\HH)=\infty$, the system $\{A^n g\}_{g\in \G, \;n=0,1,\dots, M}$ is not complete in $\HH$ for $M<\infty$. From the Hahn-Banach theorem, there exists a vector $h\in \HH$ with $\norm{h}=1$ such that $\la A^n g,h\ra=0$ for every $g \in \G,$ and $n=0,\dots,M$. Then 
\begin{eqnarray*}
\lefteqn{\sum_{g \in \G} \sum_{n=0}^{\infty}|\la h,A^ng\ra |}\\
&=&
	\sum_{g \in \G} \sum_{n=M+1}^{\infty}\Big|\sum_{0\leq j\leq \infty}\int_\CC z^n \la \wi h_j (z),\wi g_j(z)\ra _{\ell^2(\Om_j)} d\mu_j(z)\Big |^2\\
	&\leq&\sum_{g \in \G} \sum_{n=M+1}^{\infty}\Big |\int_{\overline {\D}_{1-\epsilon}} z^n \sum_{0\leq j\leq \infty} \mathbbm{1}_{\mathcal{E}_j}(z) \la \wi h_j(z),\wi g_j(z)\ra _{\ell^2(\Om_j)} d\mu(z)\Big|^2\\
&\leq& \sum_{g \in \G} \sum_{n=M+1}^{\infty}(1-\epsilon)^{2n} \Big (\sum_{0\leq j\leq \infty}\int_\CC  |\la \wi h_j (z),\wi g_j (z)\ra _{\ell^2(\Om_j)}| d\mu_j(z)\Big)^2.
\end{eqnarray*}
	Applying H\"older's inequality several times, we get
\begin{eqnarray*}
\lefteqn{\sum_{0\leq j\leq \infty} \int_\CC |\la \wi h_j(z),\wi g_j(z)\ra_{\ell^2(\Om_j)}| d\mu_j(z)}\\
&\leq&\sum_{0\leq j\leq \infty} \int_\CC \norm{\wi h_j(z)}_{\ell^2(\Om_j)} \norm{\wi g_j(z)}_{\ell^2(\Om_j)}  d\mu_j(z)\\
&\leq&\sum_{0\leq j\leq \infty} \Big (\int_\CC\norm{\wi h_j(z)}^2_{\ell^2(\Om_j)}d\mu_j(z)\Big)^{\frac{1}{2}}\Big (\int_\CC \norm{\wi g_j(z)}^2_{\ell^2(\Om_j)}   d\mu_j(z)\Big)^{\frac{1}{2}}\\
&\leq &\norm{h}\norm{g}.
\end{eqnarray*}
%%%%%%%%%%%%%%%%%%%%%%%%%%%%%%%%%%
Hence,
\begin{align*}
%&&\lefteqn{\sum_{g \in \G} \sum_{n=0}^{\infty}|<h,A^ng>|^2} \leq 
&\sum_{g \in \G} \sum_{n=0}^{\infty}|\la h,A^ng\ra|^2 \leq
\sum_{g \in \G} \sum_{n=M+1}^{\infty}(1-\epsilon)^{2n} \norm{h}^2\norm{g}^2 \\
=&\frac{(1-\epsilon)^{2(M+1)}}{1-(1-\epsilon)^{2}}\norm{h}^2\sum_{g \in \G} \norm{g}^2\;\rightarrow 0\;\;\; \text{as}\; \;M \rightarrow \infty.
\end{align*}
	Therefore the left frame inequality does not hold, and we have a contradiction.
\end{proof}

%The proof of Theorem \ref{countable support} is a direct consequence of the proof of (a) in the above theorem.
\begin{proof}[Proof of Theorem \ref{cor53}]
Define the subspace $ V_\rho$ of $ \HH$ to be $ V_\rho=\{ f: \supp \tilde f \subseteq \overline {\mathbb D}_\rho\}$. The restriction  of $A$  to $ V_\rho$ is  a normal operator with its spectrum equal to the part of the spectrum of $A$ inside $ \overline {\mathbb D}_\rho$.  Let $\widetilde \G=U\G$ where $U$ is as in Theorem \ref {decomp}.  Let $\widetilde {\G}_\rho=\{\mathbbm{1}_{\overline {\mathbb D}_\rho}\wi g: \wi g \in \widetilde \G\}$. Since $\{A^ng\}_{g\in \G, n\ge0}$ is a frame by assumption, $\{z^n \wi w\}_{\wi w \in \widetilde {\G}_\rho}
$ is a frame for $ UV_\rho$.  Thus, since $\rho<1$, Theorem \ref {condb} implies that $V_\rho$ is finite dimensional. Hence the restriction of the spectrum of $A$ to $\overline {\mathbb D}_\rho$ for any $\rho<1$ is a finite set of points. We also know from Theorem \ref {framecond}  (b) that $\mu(\mathbb {D}_1^c)=0$.   Thus,  $UAU^{-1}$ has the form $\Lambda=\sum_j\lambda_jP_j$. 
\end{proof}
%\begin{proof}[Proof of Theorem \ref{invframe}] \textcolor{red}{(a)
%Let $\HH_1=\{f\in\HH: \wi f(z)=0, z\notin S_1\}$ and $\HH_2=\{f\in\HH: \wi f(z)=0, z\notin D^o_1\}$. Then $\HH=\HH_1\oplus \HH_2$. Notice that if we have a complete Bessel system in $\HH$ and a complete Bessel system in $\HH_2$, then the union of these two systems is a Bessel system in $\HH_1\oplus\HH_2$. }

%\textcolor{red}{The existance of complete Bessel system in $ \HH_2 $ (moreover, a Parseval frame) follows from $ (b) $.}

%\textcolor{red}{$ (b) $  $\mathcal{V}=cl \{\sqrt{1-|z|^2}\cdot \wi h(z):h\in\HH\}$. Let $\{h\}_{h\in \mathcal I}$ be an orthonormal basis for $ \mathcal{V} $. Then
%		\begin {eqnarray*} \sum_{n=0}^{m}\sum_{h\in \mathcal I}|<f,A^nDh>|^2&=&\sum_{n=0}^{m}\sum_{h\in \mathcal I}|<D(A^*)^nf,h>|^2\\
%	&=&\sum_{n=0}^{m}\|D(A^*)^nf\|^2\\
%	&=&\sum_{n=0}^{m}<D^2(A^*)^nf,(A^*)^nf>\\
%	&=&\sum_{n=0}^{m}<(I-AA^*)(A^*)^nf,(A^*)^nf>\\
%	&=&\|f\|^2-\|(A^*)^{m+1}f\|.
%	\end {eqnarray*}
%	Taking limits as $m\to \infty$ and using the fact that $(A^*)^mf\to 0$ we get from the  identity above that 	
%	$$\sum_{n=0}^{\infty}\sum_{h\in \mathcal{I}}|<f,A^nDh>|^2=\|f\|^2.$$
%	Therefore  the system of vectors $\G=\{g=Dh: h \in \mathcal I\}$ is a tight frame for $\HH$.}
%\end{proof}

\begin{proof}[Proof of Theorem \ref {liTHM}]

(a) Follows from Theorem \ref {countable support}. 

(b) If $l(g) < \infty$ then $A^{l(g)}g - \sum_{k=0}^{l{(g)}-1} c_k A^kg = 0$ for some complex numbers $c_k$.
Call $Q$ the polynomial $Q(z) := z^{l(g)} - \sum_{k=0}^{l(g)-1} c_k z^k.$ We have $Q(A)g=0$ and therefore $0 =U(Q(A)g)(z) = Q(z) \wi g(z)$ $\mu-$a.e. $z$. 

Let $E_g$ be the set of roots of $Q$.
Hence $\wi g(z) =0$ $\mu$ a.e. in $(\CC \setminus E_g).$
This together with  part (a) of the theorem gives us that, for all  $g \in \G$, 
\begin{equation} \label{zero-set}
  \wi g(z) = 0 \;\;\text{ a.e. } \mu \;\;\text{ in } \bigcap_{g\in \G_F}(\overline {\mathbb {D}}_1^c \setminus E_g)
\end{equation}
where, $\G_F=\{g\in \G: l(g) < \infty\}.$

The set  $E:=\bigcup_{g\in \G_F} E_g$ is countable and $\bigcap_{g\in \G_F}(\overline {\mathbb {D}}_1^c \setminus E_g) = \overline {\mathbb {D}}_1^c \setminus E.$ So \eqref{zero-set} holds on $\overline {\mathbb {D}}_1^c \setminus E.$
It follows that for each $j \in \N^*$,
$\spn\{\wi g_j(z)\}_{g\in \G}$ is not complete in $\ell^2(\Omega_j)$ $\mu_j-{\rm a.e.} \;z \in  \overline {\mathbb {D}}_1^c \setminus E$ and therefore $\mu_j (\overline {\mathbb {D}}_1^c \setminus E) =0$.
We conclude that $\mu(\overline {\mathbb {D}}_1^c \setminus E)=0$.

(c) Let $\Delta:=\{x \in S_1: x \in \supp g,\, g \in \G, \, l(g)<\infty\}$.   From the proof of (b) $\Delta$  is countable. Then, since the projection of a Bessel system is Bessel, and the projection of a complete set is complete,   following the proof of Theorem \ref{framecond}(a) we can see that $\mu$ is absolutely continuous on  $S_1\setminus \Delta$.   
\end{proof}

\section{Self-adjoint operators} \label {SA}

The class of self-adjoint operators is an important subclass of normal reductive operators which has some interesting properties that we study in this section.  In particular,  
we prove that for self-adjoint operators the normalized system  $\left\{\frac{A^n g}{\norm{A^n g}}\right\}_{g \in \G,\;n\geq 0}$ is never a frame. The proof of this fact relies on the following theorem.

\begin {theorem}\label{feichtinger}  Every unit norm frame is a finite union of Riesz basis sequences. 
\end {theorem}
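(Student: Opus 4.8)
The plan is to recognize the statement as the Feichtinger conjecture for unit-norm frames and to deduce it from the resolution of the Kadison--Singer problem by Marcus, Spielman and Srivastava. A unit-norm frame is in particular a Bessel sequence, so the decisive object is the Gram operator of the system, and the whole argument consists in translating ``being a finite union of Riesz sequences'' into a \emph{paving} statement about that operator.

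Concretely, let $\{f_i\}_{i\in I}$ be a unit-norm frame with upper frame (Bessel) bound $B$. I would introduce the synthesis operator $T\colon \ell^2(I)\to\HH$, $Tc=\sum_{i\in I}c_if_i$, which is bounded with $\|T\|^2\le B$, and the Gram operator $G:=T^*T$ on $\ell^2(I)$, whose matrix entries are $G_{ik}=\langle f_k,f_i\rangle$. Since the vectors are unit norm, $G_{ii}=1$ for all $i$, so $H:=G-I$ is a bounded self-adjoint operator with zero diagonal and $\|H\|\le B+1$.

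The key elementary observation is the dictionary between restricted Gramians and Riesz sequences: for $J\subseteq I$ with coordinate projection $P_J$ on $\ell^2(J)$, the subfamily $\{f_i\}_{i\in J}$ is a Riesz sequence with bounds $m,M$ if and only if $mP_J\le P_JGP_J\le MP_J$, because $\|\sum_{i\in J}c_if_i\|^2=\langle P_JGP_J c,c\rangle$. In particular, whenever $\|P_J H P_J\|\le\tfrac12$ one gets $\tfrac12 P_J\le P_JGP_J\le\tfrac32 P_J$, so $\{f_i\}_{i\in J}$ is automatically a Riesz sequence, hence a Riesz basis for its closed span in the sense of \eqref{RB}. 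Thus it suffices to partition $I$ into finitely many blocks on each of which the compression of $H$ has norm at most $\tfrac12$.

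This last step is precisely paving, and here I would invoke the theorem of Marcus--Spielman--Srivastava solving Kadison--Singer: a self-adjoint operator with zero diagonal is uniformly paveable, the number of blocks depending only on the tolerance and the norm. Applying it to $H$ with $\varepsilon=\tfrac{1}{2(B+1)}$ yields an integer $r=r(B)$ and a partition $I=I_1\cup\cdots\cup I_r$ with $\|P_{I_k}H P_{I_k}\|\le\varepsilon\|H\|\le\tfrac12$ for each $k$; by the previous paragraph each $\{f_i\}_{i\in I_k}$ is a Riesz sequence, and $\{f_i\}_{i\in I}$ is the union of these $r$ of them. The only genuine difficulty is this paving input, which is the deep content of the Kadison--Singer resolution; everything else is bookkeeping. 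The single technical point I would be careful about is that Marcus--Spielman--Srivastava prove the finite-dimensional statement, so for an infinite index set $I$ I would pass to infinite-dimensional paving by the standard uniform-bound compactness argument over finite subsets, which keeps $r$ independent of $|I|$.
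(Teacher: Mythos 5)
Your proposal is correct and follows essentially the same route as the paper, which offers no independent proof but simply cites the equivalence of Feichtinger's conjecture with the Kadison--Singer problem \cite{CT06, BS06} and its resolution by Marcus--Spielman--Srivastava \cite{feichtconj}. Your Gram-operator/paving reduction (unit diagonal, zero-diagonal compression, $\|P_J H P_J\|\le\tfrac12$ forcing Riesz bounds $\tfrac12,\tfrac32$, plus the finite-to-infinite paving passage) is exactly the standard argument behind that cited equivalence, so you have merely made explicit what the paper outsources to the references.
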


Theorem \ref{feichtinger} was conjectured by Feichtinger and is equivalent  to the Kadison-Singer theorem \cite {CT06, BS06} which was proved  recently in \cite{feichtconj}.

\begin{theorem} \label{normframe}
	If $A$ is a self-adjoint operator on $\HH$ then the system  $\left\{\frac{A^n g}{\norm{A^n g}}\right\}_{g \in \G,\;n\geq 0}$ is not a frame for $\HH$.
\end{theorem}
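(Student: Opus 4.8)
The plan is to argue by contradiction, combining the splitting of unit-norm frames from Theorem \ref{feichtinger} with the observation that, for a self-adjoint $A$, the normalized iterates of a fixed vector become asymptotically parallel along each arithmetic progression of exponents. The conceptual point is that a Riesz sequence cannot tolerate pairs of vectors that are almost parallel, whereas iterates inevitably produce such pairs once we restrict to $K$ colors.

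First I would record the consequences of Theorem \ref{feichtinger}. If the system were a frame then, being of unit norm, it would be a union of $K<\infty$ Riesz sequences; let $m_0>0$ be a common lower bound in \eqref{RB}. Two elementary facts follow from the choice $c=(1,-1)$ in \eqref{RB}: a Riesz sequence cannot contain the same unit vector at two indices, and any two \emph{distinct} unit vectors $u,u'$ in one Riesz sequence satisfy $\mathrm{Re}\,\la u,u'\ra\le 1-m_0$. Since the normalized vectors are assumed well defined, $A^ng\neq 0$ for all $g\in\G$ and all $n$; fix any $g\in\G$, let $\mu_g$ be its scalar spectral measure, let $\nu$ be the push-forward of $\mu_g$ under $z\mapsto z^2$ (a measure on $[0,\norm{A}^2]$, as $A$ is self-adjoint), and set $a_k:=\int t^k\,d\nu=\norm{A^kg}^2>0$.

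Next I would analyze the even iterates $v_p:=A^{2p}g/\norm{A^{2p}g}$, for which $\la v_p,v_q\ra=a_{p+q}/\sqrt{a_{2p}a_{2q}}>0$. By Cauchy--Schwarz $a_{k+1}^2\le a_k a_{k+2}$, so the ratios $r_k:=a_{k+1}/a_k$ are nondecreasing; being bounded by $\norm{A}^2$ they converge to some $L\ge r_0>0$. Writing $\la v_p,v_{p+d}\ra^2=\frac{\prod_{i=2p}^{2p+d-1}r_i}{\prod_{i=2p+d}^{2p+2d-1}r_i}$, where each product has $d$ factors all lying in $[r_{2p},L]$, I obtain $(r_{2p}/L)^{d}\le\la v_p,v_{p+d}\ra^2\le 1$. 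Hence, given $K$ and $m_0$, there is a $P_0$ such that $\la v_p,v_q\ra>1-m_0$ whenever $p,q\ge P_0$ and $\abs{p-q}\le K+1$: this is the quantitative form of ``nearby even iterates are almost parallel'', and restricting to even exponents is exactly what keeps all these inner products positive and reduces them to the monotone ratios $r_k$.

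Finally I would run a pigeonhole argument. The subfamily $\{v_p\}_{p\ge 0}$ is again a union of at most $K$ Riesz sequences (intersect each Riesz sequence with it), so it colors the indices $\{0,1,2,\dots\}$ with $K$ colors. For $N$ large, among the $N-P_0+1$ integers in $[P_0,N]$ some color occurs at least $(N-P_0+1)/K$ times, which forces two same-colored indices $p<q$ in $[P_0,N]$ with $q-p\le K+1$. Then $v_p,v_q$ lie in one Riesz sequence with $\la v_p,v_q\ra>1-m_0$: if $v_p=v_q$ a vector is repeated, and if $v_p\neq v_q$ this violates $\mathrm{Re}\,\la v_p,v_q\ra\le 1-m_0$; either way we contradict the structure forced by Theorem \ref{feichtinger}. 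I expect the main obstacle to be precisely this last step's bookkeeping: one must produce two indices that are simultaneously large (so the moment ratios $r_k$ are near $L$, making the inner product close to $1$) and close together (so that only $\le K+1$ monotone factors enter), and the interaction between the threshold $P_0$ and the pigeonhole window $[P_0,N]$ is what makes the two requirements compatible.
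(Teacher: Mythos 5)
Your proposal is correct, and its engine is genuinely different from the paper's. Both arguments begin identically: assume the normalized system is a frame, apply Theorem \ref{feichtinger} to split it into $K$ Riesz sequences with a common lower bound $m_0$, fix a single generator, and pass to even powers so that every relevant quantity becomes a moment of a positive, compactly supported measure on $[0,\infty)$. From there the paper argues \emph{qualitatively}: by pigeonhole some color class of even exponents $\{2n_k\}$ satisfies $\sum_k 1/n_k=\infty$, and the M\"untz--Sz\'asz theorem (Lemma \ref{h=00}) shows the span of the corresponding iterates is unchanged after deleting finitely many of them, contradicting the minimality that every Riesz sequence possesses. You argue \emph{quantitatively}: Cauchy--Schwarz gives log-convexity of the moments $a_k$, so the ratios $r_k=a_{k+1}/a_k$ increase to a limit $L>0$, whence for $p,q\ge P_0$ with $|p-q|\le K+1$ one gets $\la v_p,v_q\ra\ge (r_{2P_0}/L)^{(K+1)/2}$, which tends to $1$ as $P_0\to\infty$; a pigeonhole then produces two same-colored indices that are simultaneously large and close, contradicting either the no-repetition property (if $v_p=v_q$) or the separation $\mathrm{Re}\,\la u,u'\ra\le 1-m_0$ (if $v_p\ne v_q$), both correctly derived from the choice $c=(1,-1)$ in \eqref{RB}. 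The trade-off: the paper's route needs only minimality of Riesz sequences but imports a classical density theorem, while yours needs the uniform lower Riesz bound but is otherwise elementary (Cauchy--Schwarz plus pigeonhole) and yields explicit rates of asymptotic parallelism of the even iterates; in both, self-adjointness enters only through positivity of the even moments, and Theorem \ref{feichtinger} remains the essential external input. One simplification available to you: instead of counting occurrences in a long window $[P_0,N]$, simply observe that among the $K+1$ consecutive indices $P_0,P_0+1,\dots,P_0+K$ two must share a color and are automatically within $K$ of each other; this eliminates the bookkeeping you flagged as the main obstacle.
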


\begin{remark}
An open problem is whether  the theorem remains  true for general normal operators. The theorem does not hold if the operator is not normal.  For example, the shift operator $S$ on $\ell^2(\N)$ defined by $S(x_1,x_2,\dots)=(0,x_1,x_2,\dots)$, is not normal, and $\{S^ne_1\}$ where $e_1=(1,0,\dots )$ is an orthonormal basis for $\ell^2(\N)$.
	\end{remark}
\begin {remark}\rm
It may be that the system  $\left\{\frac{A^n g}{\norm{A^n g}}\right\}_{g \in \G,\;n\geq 0}$ is not a frame for $\HH$ because it is overly redundant due to the fact that we are iterating $\{A^n g\}_{g\in \G}$ for all $n\ge 0$. We may reduce the redundancy by letting $0\le n <L(g)$ where $L \in \mathcal L$ as defined in Remark \ref {ClassL}.  For example if $\{g\}_{g \in \G}$ is an orthonormal basis for $\HH$, then trivially, we can choose $L(g)=1$ and the system  $\left\{\frac{A^n g}{\norm{A^n g}}\right\}_{g \in \G,0\le n < L(g)}$ is  an orthonormal basis for $\HH$. However, if $\G$ is finite, $\left\{\frac{A^n g}{\norm{A^n g}}\right\}_{g \in \G,0\le n < L(g)}$ cannot be a frame for $\HH$ as in the corollary below. 
\end{remark}
\begin {corollary}
\label {FinVecFr}
Let  $\{g\}_{g \in \G }\subset \HH$ and assume that  $|\G|< \infty$ and $L\in \mathcal L$. Then for a self-adjoint operator $A$, $\left\{\frac{A^n g}{\norm{A^n g}}\right\}_{g \in \G,0\le n < L(g)}$ is not a frame for $\HH$.
\end {corollary}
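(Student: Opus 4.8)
The plan is to reduce the finite-$\G$ case to the already-established Theorem \ref{normframe}, using the structural obstruction that iterations of a normal operator on finitely many vectors cannot exhaust an infinite-dimensional space too quickly. First I would observe that since $A$ is self-adjoint (hence normal and reductive) and $|\G|<\infty$ with $\dim\HH=\infty$, at least one $g\in\G$ must have $L(g)=\infty$: if every $L(g)$ were finite the system $\left\{\frac{A^n g}{\norm{A^n g}}\right\}_{g\in\G,\,0\le n<L(g)}$ would be a \emph{finite} collection of vectors, which cannot be complete (and a fortiori cannot be a frame) in an infinite-dimensional space. This is exactly the mechanism behind Corollary \ref{NotRB}, and I would invoke that reasoning to guarantee the existence of an infinite branch $n\mapsto A^n h/\norm{A^n h}$.

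Next, for that distinguished $h$ with $L(h)=\infty$, I would restrict attention to the reducing subspace $V=cl(\spn\{A^n h\}_{n\ge0})$. By Proposition \ref{prop1}, the restriction $A|_V$ is again normal, and in fact self-adjoint (the restriction of a self-adjoint operator to a reducing subspace is self-adjoint). Since $L\in\mathcal L$, condition \eqref{Lclass} ensures that within $V$ the truncated iterates $\left\{\frac{A^n h}{\norm{A^n h}}\right\}_{0\le n<\infty}$ still span a dense subspace, so the normalized system relevant to $V$ coincides (up to the contribution of other $g$'s) with the full normalized orbit of $h$ under a self-adjoint operator.

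The key step is then to apply Theorem \ref{normframe}: if $\left\{\frac{A^n g}{\norm{A^n g}}\right\}_{g\in\G,\,0\le n<L(g)}$ were a frame for $\HH$, then projecting onto $V$ would yield a frame for $V$ generated by the self-adjoint operator $A|_V$ acting on the single vector $h$ (plus the projections of the other finitely many generators, each of which contributes only a Bessel system), forcing the normalized orbit $\left\{\frac{(A|_V)^n h}{\norm{(A|_V)^n h}}\right\}_{n\ge0}$ to carry the lower frame bound on $V$. This directly contradicts Theorem \ref{normframe} applied to the self-adjoint operator $A|_V$ on the infinite-dimensional space $V$ (it is infinite-dimensional precisely because $L(h)=\infty$ and, by Corollary \ref{reductmin}, the orbit is non-minimal and genuinely infinite).

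The main obstacle I anticipate is the bookkeeping around the frame inequalities under projection: one must verify that the finitely many extra generators $g\neq h$ cannot supply the missing lower frame bound on $V$, i.e.\ that a finite union of Bessel contributions does not rescue the frame property that Theorem \ref{normframe} already forbids for the single infinite branch. The cleanest way to handle this is to note that removing finitely many vectors from a frame, or from a candidate frame, preserves the lower frame bound property on the complementary subspace, so the obstruction survives; making this precise without circularity is the delicate point, but it follows the same projection argument used to derive Corollary \ref{NotRB} from Proposition \ref{nonmini}.
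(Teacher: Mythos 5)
Your overall strategy (locate a generator $h$ with $L(h)=\infty$, pass to an invariant subspace on which $A$ restricts to a self-adjoint operator, and contradict Theorem \ref{normframe}) is the same as the paper's, but your execution of the reduction has a genuine gap. You restrict to the orbit closure $V=cl(\spn\{A^n h\}_{n\ge 0})$ of a \emph{single} generator and argue that, after projecting the hypothetical frame onto $V$, the contributions of the other generators ``each of which contributes only a Bessel system'' can be discarded, leaving the lower frame bound to the normalized orbit of $h$ alone. That step is false: removing a Bessel subfamily from a frame does not preserve the lower frame bound (take the orthonormal basis $\{e_n\}_{n\ge 1}$, which is a frame, and remove the Bessel family $\{e_n\}_{n\ge 2}$; what remains is not a frame). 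The only removal that is automatically harmless is that of \emph{finitely many} vectors, and the orbits of the other generators $g\in\G$ with $L(g)=\infty$ are infinite, so they cannot be thrown away this way. There is a second, related problem: since $V$ is reducing, $P_V\bigl(A^n g/\norm{A^n g}\bigr)=A^n(P_V g)/\norm{A^n g}$, so the projected system is \emph{not} a normalized iteration system (the normalizing constants are $\norm{A^n g}$, not $\norm{A^n P_V g}$, and the projected vectors need not have norms bounded below). Hence Theorem \ref{normframe}, whose proof rests on Feichtinger's theorem for unit-norm frames, does not apply to the projected system.

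The paper's proof avoids both issues by not reducing to a single generator. Set $\G_\infty=\{g\in\G:\ L(g)=\infty\}$, which is non-empty since $\dim\HH=\infty$ and $|\G|<\infty$. The vectors discarded are exactly those coming from generators with $L(g)<\infty$; because $|\G|<\infty$, this is a \emph{finite} set of vectors, so the remaining system $\bigl\{A^n g/\norm{A^n g}\bigr\}_{g\in\G_\infty,\,n\ge 0}$ is a frame for its closed span $V$. That span is invariant, $A|_V$ is self-adjoint, and the surviving system is genuinely a normalized iteration system of $A|_V$ over the (countable, indeed finite) set $\G_\infty$ --- note that Theorem \ref{normframe} permits any countable generating set, so there is no need to isolate one vector $h$. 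This yields the contradiction directly. Your proof becomes correct if you replace your single-orbit subspace with $V=cl(\spn\{A^n g\}_{g\in\G_\infty,\,n\ge 0})$ and delete the projection/Bessel-discarding step entirely.
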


\begin{proof}[Proof of Theorem \ref {normframe}]  Suppose it is a frame.  	
	Using Feichtinger's theorem,  we decompose the set $\left\{\frac{A^n g}{\norm{A^n g}}\right\}_{g \in \G,\;n\geq 0}$  into a finite union  of Riesz sequences.  Choose a  vector $h\in \G$. Thus the subsystem  $\left\{\frac{A^n h}{\norm{A^n h}}\right\}_{n\geq 0}$ can be decomposed into a union of Riesz sequences and therefore a union of minimal sets.  Since there are finitely many sequences, the powers of $A$ in one of these sequences must contain infinite number of even  numbers $\{2n_k\}$ (in particular, the  system  $\{A^{2n_k} h\}_{k=1,\dots}$ is a minimal set) such that
	\begin{equation}\label{muntzcond}
\sum\limits_{k\ge 1}\frac{1}{n_k}=\infty.
	\end{equation}

	If we consider the operator $A^2$, then its spectrum is a subset of $[0,\infty)$. In order to finish the proof of the Theorem, we use the following Lemma whose proof is a corollary  of the 
	M\"untz-Sz\'asz theorem \cite{rudinrc}. 
	
	\begin{lemma} \label{h=00} Let $\mu$ be a regular Borel measure on $[0,\infty)$ with a compact support and $n_k$, $k=0,1,\dots$, be a sequence of natural numbers such that $n_0=0$ and $$\sum\limits_{k\ge 1}\frac{1}{n_k}=\infty.$$ For a function  $\phi \in L^1({\mu})$, if   		$$\int_0^{\infty} x^{n_k} \phi(x)d\mu(x)=0\; \text{for  every }\;k,$$
		 then  $\phi=0$ $\mu$ a.e..
	\end{lemma}

Let $V=cl({span}\{(A^2)^{n}h\}_{n\ge 0})$, and let $B$ be the restriction of $A^2$ on $V$.  Since $B$ is positive definite, its spectrum $\sigma(B) \subset [0, b]$ for some $b\ge 0$. Let $\mu$ be the measure defined in \eqref{themeas} associated with $B$. By Theorem \ref {mainthm}, $\mu_j=0$ for all $j\ne 1$ (i.e., $\mu=\mu_1$), and $\wi h(x) \ne 0$ a.e. $ \mu$.

Let $ n_k,  k\geq 1 $ be the sequence of integers chosen above such that  $\{A^{2n_k} h\}_{k=1,\dots}$ is a minimal set and \eqref{muntzcond} holds. Set $n_0=0$. Note that  both  sequences $\{n_k\}_{k\ge 0}$, and $\{n_k\}_{k=0, m, m+1, \dots}$ satisfy the condition of the Lemma \ref {h=00}, hence $\int_0^b x^{n_k}\wi h(x)\overline {\wi f (x)} d\mu(x)=0$ for all $k\ge 0$ implies that $\wi f=0$ a.e. $\mu$, as well as $\int_0^b x^{n_k}\wi h(x)\overline {\wi f (x)} d\mu(x)=0$ for all $k=0, m, m+1,\dots$ implies that $\wi f=0$ a.e. $\mu$.  Thus, $V=cl({span}\{(A^2)^{n}h\}_{n\ge 0})=cl({span}\{(A^2)^{n_k}h\}_{k=0,m,m+1,\dots})=cl({span}\{(A^2)^{n_k}h\}_{k\ge0})$ which contradicts the minimality condition. 
\end{proof}

\begin {proof} [Proof of Corollary \ref {FinVecFr}] suppose $\left\{\frac{A^n g}{\norm{A^n g}}\right\}_{g \in \G,\;0\leq n<L(g)}$ is a frame for $\HH$. Because $\dim \HH=\infty$, the set $\G_{\infty}= \{g \in \G |\;L(g)=\infty\}$ is non-empty.  Then the system $\left\{\frac{A^n g}{\norm{A^n g}}\right\}_{g\in \G_{\infty},0\leq n<L(g)=\infty}$ is a frame for its closure since we get it by removing finite number of vectors from a frame. The closure is an invariant subspace and  $A$ restricted to it remains self-adjoint which contradicts Theorem \ref {normframe}.
\end {proof}

%\textcolor{blue}{Does the upper or the lower bound fail that it is not a frame? I guess it is the upper bound as clarified by your remark above}

\section{Applications to groups of unitary operators}\label {GUO}

In this section, we apply some of our results to discrete groups of unitary operators. These often occur in wavelet, time frequency and frame constructions. 

As a corollary of the spectral theorem of normal operators, a normal operator is unitary if and only if its spectrum is a subset of the unit circle. We will need the following Proposition from Wermer  \cite{wermer}.

\begin{proposition}[\cite{wermer}] \label {Wermer1}
For a unitary operator $T$ the following are equivalent
\begin{enumerate}
	\item $ T $ is not reductive

	\item The arc length measure is absolutely continuous with respect to the spectral measure of $T$.
\end{enumerate}
\end{proposition}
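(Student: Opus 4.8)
The plan is to reduce the reductivity of $T$ to a statement about scalar measures on the circle and then invoke the F.\ and M.\ Riesz theorem. Since $T$ is unitary its spectrum lies on $S_1$, so its scalar spectral measure $\mu$ (from \eqref{themeas}) is supported on $S_1$; write $\sigma$ for arc length measure on $S_1$. The remark following Proposition~\ref{prop2} tells us that $T$ is reductive if and only if the implication
$$\int_{S_1} z^n h(z)\,d\mu(z)=0 \ \ (\forall n\ge 0),\quad h\in L^1(\mu)\ \Longrightarrow\ h=0\ \ \mu\text{-a.e.}$$
holds. Hence $T$ \emph{fails} to be reductive precisely when there is a nonzero $h\in L^1(\mu)$ annihilating all nonnegative powers of $z$, and the whole proof consists in matching the existence of such an $h$ with the condition $\sigma\ll\mu$.

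For the direction $(2)\Rightarrow(1)$ I would exhibit such an $h$ explicitly. Assuming $\sigma\ll\mu$, let $\varphi=d\sigma/d\mu\in L^1(\mu)$ be the Radon--Nikodym derivative and set $h(z):=z\,\varphi(z)$. Then $h\in L^1(\mu)$ because $\int_{S_1}|h|\,d\mu=\int_{S_1}\varphi\,d\mu=\sigma(S_1)<\infty$, and $h\ne0$ in $L^1(\mu)$ since $\int\varphi\,d\mu=\sigma(S_1)>0$. For every $n\ge0$,
$$\int_{S_1} z^n h\,d\mu=\int_{S_1} z^{n+1}\varphi\,d\mu=\int_{S_1} z^{n+1}\,d\sigma=0,$$
because $n+1\ge1$. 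By the criterion above, $T$ is not reductive.

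The substantive direction is $(1)\Rightarrow(2)$. Given a nonzero annihilating $h$, set $\nu:=h\,d\mu$, a nonzero complex measure on $S_1$. The annihilation says precisely that the Fourier--Stieltjes coefficients $\hat\nu(m)=\int z^{-m}\,d\nu$ vanish for all $m\le0$, in particular for all negative $m$. By the F.\ and M.\ Riesz theorem, $\nu\ll\sigma$ and its density $\psi=d\nu/d\sigma$ lies in the Hardy space $H^1$; being a nonzero $H^1$ function (as $\nu\ne0$), $\psi$ satisfies $\log|\psi|\in L^1(\sigma)$ and hence $\psi\ne0$ $\sigma$-a.e. To convert this into $\sigma\ll\mu$, I would decompose $\mu=\mu_{ac}+\mu_s$ with respect to $\sigma$, with $d\mu_{ac}=w\,d\sigma$ and $\mu_s\perp\sigma$. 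Then $\nu=(hw)\,d\sigma+h\,d\mu_s$, and uniqueness of the Lebesgue decomposition (using $\nu\ll\sigma$) forces $h\,d\mu_s=0$ and $\psi=hw$ $\sigma$-a.e. Since $\psi\ne0$ a.e., we get $w>0$ $\sigma$-a.e., which is equivalent to $\sigma\ll\mu$: if $\mu(E)=0$ then $\int_E w\,d\sigma=\mu_{ac}(E)=0$, and $w>0$ a.e. forces $\sigma(E)=0$.

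The main obstacle is this forward direction, and its crux is the pairing of the F.\ and M.\ Riesz theorem with the nonvanishing of nonzero Hardy functions: together these rule out $\mu$ being concentrated, in its absolutely continuous part, on a proper subset of the circle while still admitting a nonzero analytic annihilator. The remaining work---the explicit construction and the Lebesgue-decomposition bookkeeping---is routine once these classical inputs are in hand.
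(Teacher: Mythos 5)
Your proof is correct. There is no in-paper argument to compare it to: Proposition \ref{Wermer1} is quoted from Wermer \cite{wermer} without proof, so what you have written is a self-contained reconstruction, and it is essentially the classical proof of Wermer's theorem. Both directions are sound: for $(2)\Rightarrow(1)$ the explicit annihilator $h(z)=z\,\varphi(z)$ with $\varphi=d\sigma/d\mu$ works exactly as you say (since $|z|=1$ on $S_1$, $\|h\|_{L^1(\mu)}=\sigma(S_1)>0$, so $h\neq 0$); for $(1)\Rightarrow(2)$, the F.\ and M.\ Riesz theorem applied to $\nu=h\,d\mu$, combined with the a.e.\ nonvanishing of a nonzero $H^1$ density and the uniqueness of the Lebesgue decomposition $d\mu=w\,d\sigma+d\mu_s$, does force $w>0$ $\sigma$-a.e.\ and hence $\sigma\ll\mu$. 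Two points are worth making explicit in a write-up. First, statement (2) refers to the projection-valued spectral measure $E$ of $T$, while you work throughout with the scalar spectral measure $\mu$ of \eqref{themeas}; these have the same null sets, so the two formulations of absolute continuity agree. Second, your opening reduction --- $T$ is reductive iff the only $h\in L^1(\mu)$ with $\int z^n h\,d\mu=0$ for all $n\ge 0$ is $h=0$ --- is precisely the remark following Proposition \ref{prop2}, so anchoring the whole argument there is legitimate; and the Radon--Nikodym step in $(2)\Rightarrow(1)$ is justified because $\mu$, being a countable sum of finite measures, is $\sigma$-finite.
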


Let $\pi$ be a unitary representation of a discrete group $ \Gamma $  on Hilbert space $\HH$. The order $o(\gamma)$ of an element $\gamma\in\Gamma$ is the smallest natural number $m$ such that $\gamma^m=1$. If no such number exists then we say $o(\gamma)=\infty$. The same way we define the order of an operator $\pi(\gamma)$.

Notice that if $o(\pi(\gamma))<\infty$ then 
it is reductive and its spectrum is a subset of the set of $ o(\pi(\gamma)) $-th roots of unity.

\begin{theorem}
Let $\pi$ be a unitary representation of a discrete group $ \Gamma $  on Hilbert space $\HH$ and suppose there exists a set of vectors $\G \subseteq \HH$, such that $\{\pi(\gamma) g:\;\gamma\in \Gamma, g\in \G\}$ is a minimal system. Then, for every $\gamma\in \Gamma$ with $ o(\gamma) =\infty$, $\pi(\gamma)$ is non-reductive and hence the arc length measure on $ S_1 $ is absolutely continuous with respect to the spectral measure of $\pi(\gamma)$.
 \end{theorem}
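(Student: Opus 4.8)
The plan is to argue by contradiction and reduce everything to Corollary \ref{reductmin} together with Wermer's Proposition \ref{Wermer1}. Fix $\gamma\in\Gamma$ with $o(\gamma)=\infty$ and write $T:=\pi(\gamma)$; being the image of a group element under a unitary representation, $T$ is unitary and hence normal. Suppose, toward a contradiction, that $T$ is reductive.

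The first step is to extract the correct subsystem. Since $\G$ is countable and the nonnegative powers $\gamma^n$, $n\ge 0$, are pairwise distinct (this is exactly where $o(\gamma)=\infty$ enters), the collection of indices $\{(\gamma^n,g):n\ge 0,\ g\in\G\}$ is a subset of the full index set. Consequently $\{T^n g:g\in\G,\ n\ge 0\}=\{\pi(\gamma^n)g:g\in\G,\ n\ge 0\}$ is a subsystem of the given minimal system $\{\pi(\gamma')g:\gamma'\in\Gamma,\ g\in\G\}$. Minimality is inherited by subsystems: if no vector of a system lies in the closed linear span of all the others, then a fortiori no vector lies in the closed span of a sub-collection of the others. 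Hence $\{T^n g:g\in\G,\ n\ge 0\}$ is itself minimal.

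The second step is the contradiction. Because $T$ is a reductive normal operator and $\G$ is countable, Corollary \ref{reductmin} asserts that $\{T^n g:g\in\G,\ n\ge 0\}$ is \emph{not} minimal, contradicting the preceding paragraph. Therefore $T=\pi(\gamma)$ cannot be reductive. Finally, since $\pi(\gamma)$ is unitary and non-reductive, Proposition \ref{Wermer1} immediately yields that the arc length measure on $S_1$ is absolutely continuous with respect to the spectral measure of $\pi(\gamma)$, which completes the argument.

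I expect the only genuinely delicate point to be the bookkeeping in the first step: one must verify that passing from the full (possibly uncountably indexed) system down to the countable subsystem $\{T^n g\}$ preserves minimality and lands in precisely the form $\{A^n g\}_{g\in\G,\ n\ge 0}$ demanded by Corollary \ref{reductmin}, and that the distinctness of the powers $\gamma^n$ — which rests on $o(\gamma)=\infty$ — is what makes this a bona fide subsystem rather than one with repeated indices. (As a sanity check, minimality of the full system already forces the vectors $\pi(\gamma^n)g$ to be distinct for fixed $g$, so $T$ cannot even have finite order; this is automatically subsumed by the reductive case handled above.) Everything else is a direct invocation of results already established (Corollary \ref{reductmin}) or quoted (Proposition \ref{Wermer1}).
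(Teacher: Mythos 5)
Your proof is correct and follows essentially the same route as the paper's: restrict to the subsystem $\{\pi(\gamma)^n g\}_{g\in\G,\,n\ge 0}$, which is a genuine (index-distinct, since $o(\gamma)=\infty$) subfamily of the minimal system and hence minimal, then invoke Corollary \ref{reductmin} to conclude $\pi(\gamma)$ is non-reductive and Proposition \ref{Wermer1} for the absolute continuity. The paper phrases the bookkeeping via injectivity of $\pi$ (so that $o(\gamma)=o(\pi(\gamma))$) rather than your explicit subfamily argument, but the substance is identical.
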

\begin{proof} The minimality condition implies that $\pi$ is injective and hence $o(\gamma)=o(\pi(\gamma))$.
Let $\gamma\in \Gamma$ be such that $ o(\gamma) =\infty$, then from the minimality assumption, $\{\pi(\gamma)^n g:\;\gamma\in \Gamma, g \in \G\}$ is a minimal subsystem. Thus, from Corollary \ref{reductmin}, $\pi(\gamma)$ is non-reductive. The rest follows from  Proposition \ref  {Wermer1} above.
\end{proof}

\begin{theorem} \label{groupbess}
Let $\pi$ be a unitary representation of a discrete group $ \Gamma $  on Hilbert space $\HH$  and suppose there exists a set of vectors 
$\G \subseteq \HH$, such that $\Gamma\{\G\}=\{\pi(\gamma) g:\;\gamma\in \Gamma, g\in \G\}$ is  complete in $\HH$ and, for every $ g\in \G $, $ \Gamma\{g\}=\{\pi(\gamma) g:\;\gamma\in \Gamma \} $ is a Bessel system in $\HH$. Then for every $ \gamma \in \Gamma$ with $o(\gamma)=\infty$,  the measure $\mu$ associate with $\pi(\gamma)$  is absolutely continuous with respect to the arc length measure on $S_1$.
\end{theorem}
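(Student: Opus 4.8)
The plan is to reduce the statement to Theorem \ref{framecond}(a) applied to the single unitary operator $T := \pi(\gamma)$. Fix $\gamma \in \Gamma$ with $o(\gamma) = \infty$. Since $\pi$ is a unitary representation, $T$ is unitary, so $\sigma(T) \subseteq S_1$ and its scalar spectral measure $\mu$ is supported on $S_1$; in particular $\mu = \mu|_{S_1}$ and $\mu(\CC \setminus \overline{\D}_1) = 0$ automatically. Thus it suffices to exhibit a countable set of vectors on which the nonnegative iterates of $T$ form a complete system, with each single-vector orbit Bessel, and then invoke Theorem \ref{framecond}(a), whose conclusion ``$\mu|_{S_1}$ is absolutely continuous with respect to arc length'' is exactly what is wanted.

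The correct set to iterate is the full orbit $\G' := \Gamma\{\G\}$, not $\G$ itself: iterating $T$ only through its own cyclic subgroup $\langle\gamma\rangle$ need not recover completeness, whereas the hypothesis guarantees completeness of the whole orbit. First I would observe that $\{T^n h : h \in \G', n \ge 0\}$ is complete in $\HH$, since the terms with $n=0$ already constitute all of $\G' = \Gamma\{\G\}$, which is complete by assumption. Next, for $h \in \G'$ write $h = \pi(\gamma') g$ with $\gamma' \in \Gamma$ and $g \in \G$, and use the representation property to compute $T^n h = \pi(\gamma)^n \pi(\gamma') g = \pi(\gamma^n \gamma') g$.

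The key step, and the place where $o(\gamma)=\infty$ is indispensable, is the Bessel estimate for each single-vector orbit $\{T^n h : n \ge 0\}$. Because $o(\gamma) = \infty$, the map $n \mapsto \gamma^n \gamma'$ is injective, so $\{T^n h : n \ge 0\} = \{\pi(\gamma^n\gamma')g : n\ge 0\}$ is indexed by distinct group elements and is therefore a genuine subfamily of $\Gamma\{g\} = \{\pi(\gamma'')g : \gamma''\in\Gamma\}$. Consequently, for every $f \in \HH$,
$$\sum_{n \ge 0} |\la f, T^n h\ra|^2 \le \sum_{\gamma'' \in \Gamma} |\la f, \pi(\gamma'') g\ra|^2 \le \beta_g \|f\|^2,$$
where $\beta_g$ is the Bessel bound of $\Gamma\{g\}$, so $\{T^n h\}_{n \ge 0}$ is Bessel. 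I expect this to be the main (if modest) obstacle, both because it is the one nontrivial verification and because it pinpoints where the infinite-order hypothesis enters: if $o(\gamma)$ were finite the indices $\gamma^n\gamma'$ would repeat cyclically, the left-hand sum would generally diverge, and the conclusion (which indeed fails for finite order, where $\mu$ is atomic) would be unavailable.

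With both hypotheses of Theorem \ref{framecond}(a) verified for $A = T$ and the set $\G'$, that theorem yields that $\mu|_{S_1}$ is absolutely continuous with respect to arc length measure on $S_1$. Since $T$ is unitary, $\mu$ is concentrated on $S_1$, whence $\mu$ itself is absolutely continuous with respect to arc length, completing the argument. Countability of $\G' = \Gamma\{\G\}$, needed so that Theorem \ref{framecond} applies, follows from the standing assumption that $\Gamma$ and $\G$ are countable.
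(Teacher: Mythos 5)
Your proof is correct and follows essentially the same route as the paper: both reduce the claim to Theorem \ref{framecond}(a) for the single unitary $T=\pi(\gamma)$ acting on the full orbit $\Gamma\{\G\}$ as generating set, with $o(\gamma)=\infty$ guaranteeing that each $T$-orbit $\{T^n\pi(\gamma')g\}_{n\ge 0}$ is a subfamily (over distinct indices) of the Bessel system $\Gamma\{g\}$, hence Bessel, while completeness is inherited from the $n=0$ layer. The only cosmetic difference is that you invoke the statement of Theorem \ref{framecond}(a) directly and then use unitarity to place $\mu$ on $S_1$, whereas the paper re-runs that theorem's proof together with Theorem \ref{mainthm}, and also records that the Bessel hypothesis forces $\ker\pi$ to be finite so that $o(\pi(\gamma))=\infty$ --- a step your argument legitimately bypasses by working with distinctness of the group-element indices rather than of the operators.
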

\begin{proof} Suppose $o(\gamma)=\infty$.  The assumption that the system  $ \{\pi(\gamma) g:\;\gamma\in \Gamma \} $ is Bessel implies that the kernel of the representation $ \pi $ must be finite, otherwise any vector in the system will be repeated infinitely many times, prohibiting the Bessel property from holding. Thus  $o(\gamma)=\infty$ implies $o(\pi(\gamma))=\infty$.
	
	 Pick any vector $\pi(h)g$ where $h\in \Gamma, g \in \G$. Then $\{\pi(\gamma)^n \pi(h)g\}_{n\geq 0}$ is a subsystem of $\Gamma\{g\}$ since $\pi(\gamma)^n\pi(h)\neq \pi(\gamma)^m\pi(h)$ if $n\neq m$.  Hence, using the fact that $\{\pi(\gamma)^n \pi(h)g\}_{n\geq 0}$ is a Bessel sequence, from the proof of Theorem \ref {framecond}(a) we  get that, for every $j\in \N^*$, the measure  $\mu_j$ in the \eqref{repres} representation of $\pi(\gamma)$ is absolutely continuous on $\supp [({\pi(h)g})\tilde{}_j]$. Since $\{\pi(h)g:\;h\in \Gamma,g \in \G\}$ is complete in $\HH$, from Theorem \ref{mainthm}, $\mu$ is concentrated on the set $\cup_{0\leq j\leq \infty}\supp [({\pi(h)g})\tilde{}_j]$ thus we  get that  the spectrum of $\pi(\gamma)$ is absolutely continuous with respect to arc length measure. \end{proof}

In fact, it was shown in \cite{Weber08} (lemma 4.19) that the assumptions in the previous theorem hold if and only if $\pi$ is a subrepresentation of the left regular representation of $\Gamma$ with some multiplicity. And as a corollary of that, if the conditions of  Theorem \ref{groupbess} hold, it is possible to find another set $\G^\prime\subset \HH$ such that $\{\pi(\gamma) g:\;\gamma\in \Gamma, g\in \G^\prime\}$ is a Parseval frame for $\HH$.
\section{Concluding remarks}
In this paper we consider a system of iterations of the form $\{A^ng\;|\; g \in \G,\;0\leq n< L(g)\}$ 
where $A$ is a normal bounded operator in a Hilbert space $\HH$, $\G\subset \HH$ is countable set of vectors, and where $L$  is a function defined in Definition \ref {ClassL}. The goal is to find  relations between the operator $A$,  the set $\G\subset \HH$, and the function $L$ that makes the system of iterations $\{A^ng\;|\; g \in \G,\;0\leq n< L(g)\}$ complete, Bessel, a basis, or a frame. Although we have exhibited some of these relations for the case of normal operators and reductive normal operators, there are many open questions. For example, a necessary condition as to when  the system of iterations $\{A^ng\;\;n\ge0\}$ is a frame for $\HH$ is derived, but the necessary and sufficient conditions is only answered for $\HH\in \ell^2(\N)$ and when $A$ is essentially a compact self-adjoint operator (see \cite {ACMT14}). In fact, some of the problems in this work are connected to  the still open invariant subspace problem in Hilbert spaces.  The connection between sampling theory and frame theory, and some of the problems in operator algebras, and spectral theory  makes the Dynamical Sampling problem, which is the underlying problem driving this work, a fertile ground for interaction between these areas of mathematics.

\section{Acknowledgements}
Akram Aldroubi would like to thank the Hausdorff Institute of Mathematics for providing him the resources  to finish this work while in residence at HIM during the special trimester on `` Mathematics of Signal Processing,'' in 2016 Ahmet Faruk   \c{C}akmak was visiting Vanderbilt while collaborating on this research. The authors are very grateful for the thoughtful comments of the referee   which helped them to substantially improve the  paper and its presentation.

\bibliography{refers}{}
\bibliographystyle{plain}

\end{document}